\def\@tocline#1#2#3#4#5#6#7{\relax
  \ifnum #1>\c@tocdepth 
  \else
    \par \addpenalty\@secpenalty\addvspace{#2}%
    \begingroup \hyphenpenalty\@M
    \@ifempty{#4}{%
      \@tempdima\csname r@tocindent\number#1\endcsname\relax
    }{%
      \@tempdima#4\relax
    }%
    \parindent\z@ \leftskip#3\relax \advance\leftskip\@tempdima\relax
    \rightskip\@pnumwidth plus4em \parfillskip-\@pnumwidth
    #5\leavevmode\hskip-\@tempdima
      \ifcase #1
       \or\or \hskip 1em \or \hskip 2em \else \hskip 3em \fi%
      #6\nobreak\relax
      \dotfill
      \hbox to\@pnumwidth{\@tocpagenum{#7}}
    \par
    \nobreak
    \endgroup
  \fi}
\newtheorem{theorem}{Theorem}
\numberwithin{theorem}{section}
\newtheorem{lemma}[theorem]{Lemma}
\newtheorem{proposition}[theorem]{Proposition}
\newtheorem{corollary}[theorem]{Corollary}
\theoremstyle{definition}
\theoremstyle{remark}
\newtheorem{remark}[theorem]{Remark}
\newcommand{\R}{\mathbb{R}}
\newcommand{\Dtwo}{\mathcal{D}_k}
\newcommand{\bx}{\boldsymbol{x}}
\newcommand{\by}{\boldsymbol{y}}
\newcommand{\ba}{\boldsymbol{a}}
\newcommand{\bu}{\boldsymbol{u}}
\newcommand{\bv}{\boldsymbol{v}}
\newcommand{\be}{\begin{eqnarray*}}
\newcommand{\ee}{\end{eqnarray*}}
\newcommand{\bal}{\begin{align*}}
\newcommand{\ea}{\end{align*}}
\newcommand{\bs}{\boldsymbol}
\newcommand{\Smone}{\mathbb{S}^{m-1}}
\newcommand{\Bm}{\mathbb{B}^{m}}
\newcommand{\C}{\mathbb{C}}
\newcommand{\HK}{\mathcal{H}_k}
\newcommand{\ze}{\bs{\zeta}}
\newcommand{\et}{\bs{\eta}}
\newcommand{\BLK}{\mathcal{B}_l(\R^m\times\Bm,\HK(\C))}
\newcommand{\BBK}{\mathcal{B}_l(\Bm\times\Bm,\HK(\C))}
\newcommand{\BLKS}{\mathcal{B}_l(\Smone\times\Smone,\HK(\Smone))}
\newcommand{\PLK}{\mathcal{P}_l(\R^m\times\Bm,\HK(\C))}
\newcommand{\BPK}{b^p(\Omega\times\Bm,\HK(\C))}
\numberwithin{equation}{section}
\begin{document}

\title[Polynomial Null Solutions, Bosonic Bergman and Hardy Spaces]{Polynomial Null Solutions to Bosonic Laplacians, Bosonic Bergman and Hardy Spaces}

\author[C. Ding]{Chao Ding}
\address{Department of Mathematics and Statistics, Masaryk University, Brno, Czech Republic}
\email{chaoding@math.muni.cz}
\author[P.T. Nguyen]{Phuoc-Tai Nguyen}
\address{Department of Mathematics and Statistics, Masaryk University, Brno, Czech Republic}
\email{ptnguyen@math.muni.cz}
\author[J. Ryan]{John Ryan}
\address{Department of Mathematical Science, University of Arkansas, Fayetteville, AR. U.S.A.}
\email{jryan@uark.edu}

\date{\today}

\begin{abstract}
A bosonic Laplacian, which is a generalization of Laplacian, is constructed as a second order conformally invariant differential operator acting on functions taking values in irreducible representations of the special orthogonal group, hence of the spin group. In this paper, we firstly introduce some properties for homogeneous polynomial null solutions to bosonic Laplacians, which give us some important results, such as an orthogonal decomposition of the space of polynomials in terms of homogeneous polynomial null solutions to bosonic Laplacians, etc. This work helps us to introduce Bergman spaces related to bosonic Laplacians, named as bosonic Bergman spaces, in higher spin spaces. Reproducing kernels for bosonic Bergman spaces in the unit ball and a description of bosonic Bergman projection are given as well. At the end, we investigate bosonic Hardy spaces, which are considered as generalizations of harmonic Hardy spaces. Analogs of some well known results for harmonic Hardy spaces are provided here. For instance, connections to certain complex Borel measure spaces, growth estimates for functions in the bosonic Hardy spaces, etc.
\medskip
	
\noindent\textit{Key words: Bosonic Laplacians, Real analyticity, $L^2$ decomposition, Bosonic Hardy spaces, Bosonic Bergman spaces.}
	
\medskip
	
\noindent\textit{2000 Mathematics Subject Classification: 42Bxx, 42B37, 30H10, 30H20.}

\end{abstract}

\maketitle

\tableofcontents

\section{Introduction}
In complex analysis, the Bergman and Hardy spaces are particular subspaces of holomorphic functions on the unit disk or the upper-half plane, which play an important role in modern harmonic analysis. The theory of Hardy and Bergman spaces combines techniques from functional analysis, the theory of analytic functions and Lebesgue integration and it has many applications, such as signal processing, Fourier analysis, maximum modulus principle, etc. More details can be found in, for instance, \cite{Duren,DurenB,Fefferman,Heden,Niko,Zhu}.
\par
Here we investigate a function theory related to a particular type of second order conformally invariant differential operators, named as bosonic Laplacians. These second order differential operators act on functions taking values in irreducible representations of $SO(m)$, hence of the spin group $Spin(m)$. Further, for convenience, these representation spaces are usually realized as the spaces of scalar-valued homogeneous harmonic polynomials. Particularly, when the degree of the space of homogeneous harmonic polynomials is zero, the bosonic Laplacian reduces to the classical Laplacian. To introduce all these, we need Clifford analysis and Clifford algebras.
\par
Clifford analysis is considered as not only a higher dimensional function theory offering a generalization of complex analysis but also a refinement of classical harmonic analysis. It is centered around the study of the Dirac operator and monogenic functions (null solutions of the Dirac operator). Hardy and Bergman spaces have also been investigated by many researchers via Clifford analysis in the past decades. For instance, in \cite{Qian}, the authors introduced an analog of Hardy type spaces over a special type of surfaces lying in the conformal closure of $\mathbb{C}^m$ with an application of Vahlen matrices. A technique with homogeneous series expansion was applied in \cite{Bernstein} to study characterizations of certain Dirichlet and Hardy spaces of Clifford-valued monogenic  function in the unit ball. Boundary values of functions in Hardy spaces and applications in signal processing were provided in \cite{Qiansignal}. A frame theory of Hardy spaces was developed by using Cauchy type integral in \cite{Chen}. Hardy spaces related to some perturbed Dirac operators in exterior uniformly rectifiable domains was developed in \cite{Mar}. Reproducing kernel for the module of Clifford valued square-integrable eigenfunctions of the Dirac operator was investigated in \cite{Brackx} and weighted Bergman projections in the monogenic Bergman spaces was studied in \cite{Ren}. More details can be found in, for instance, \cite{Axler,Mitrea,SW1}.
\par
The study of conformally invariant differential operators in the higher spin spaces can be traced back to 1968, when Stein and Weiss \cite{SW} introduced a technique to construct first order conformally invariant differential operators, named as Stein-Weiss gradients, by applying a decomposition of tensor product of two representation spaces of the spin group. It turns out that first order conformally invariant differential operators in the higher spin spaces in Clifford analysis can be constructed as Stein-Weiss gradients as well, see \cite{DWR0}. In \cite{Bures,DeBie,Dunkl, Eelbode}, the first and second order conformally invariant differential operators, named as Rarita-Schwinger operators and the higher spin Laplace operators (also called bosonic Laplacians), in the higher spin spaces via Clifford analysis with algebraic and analytic techniques. Some properties and integral formulas, such as Green's integral formula and Borel-Pompeiu formula, for bosonic Laplacians have been introduced in \cite{DR}. Further, Clerc and \O rsted \cite{CO} introduced a representation-theoretic framework to show the connection between these conformally invariant differential operators in the higher spin spaces and Knapp-Stein intertwining operators. Recently, in \cite{DTR}, the authors solved Dirichlet problems for bosonic Laplacians in the unit ball and upper-half space. Further, they discovered that null solutions to bosonic Laplacians also possess some important properties as the classical Laplacian does, such as the mean-value property, Cauchy's estimates, Liouville's Theorem, etc.
\par
In this paper, we will continue our investigation on analogs of harmonic function theory in higher spin spaces. More specifically, we study homogeneous polynomial null solutions to  bosonic Laplacians, which was firstly described in \cite{DeBie} by a decomposition into irreducible representations of the spin group. In this paper, we provide more information of these particular homogeneous polynomial null solutions, for instance, orthogonality property and a decomposition of certain $L^2$ space on the unit sphere. Analogs of some properties of harmonic Hardy spaces and harmonic Bergman spaces in higher spin spaces in the framework of Clifford analysis are also investigated here. In order to facilitate calculations, Clifford algebras are needed here. The intricate form of the bosonic Laplacians, together
with the rotation action on the second variable and the interaction of the two variables, leads to the invalidity of some classical techniques and highly complicates the analysis.
\\
\textbf{Main results:} 
\begin{enumerate}
\item Homogeneous polynomial null solutions to bosonic Laplacians is studied. An orthogonal decomposition for certain $L^2$ space in terms of these homogeneous polynomial solutions is discovered. We also find that there is an orthogonal property between two homogeneous polynomial null solutions with different degrees, which can be considered as an analog of the orthogonality between spherical harmonics.
\item We define bosonic Bergman spaces in the last section, where we also provide a description for the Bergman reproducing kernel and the bosonic Bergman projection.
\item We introduce bosonic Hardy spaces as generalizations of harmonic Hardy spaces in the higher spin cases. Close relationship between the bosonic Hardy spaces and certain complex Borel measure spaces is provided.
\end{enumerate}
The investigation on bosonic Hardy and Bergman spaces also suggests that many other interesting problems on Hardy and Bergman spaces in classical harmonic analysis can be studied in the higher spin cases as well. For instance, different characterizations of Hardy spaces, the classical Riesz theory on boundary behavior, Berezin transform in Bergman spaces, etc.

\subsection*{Acknowledgements}
Chao Ding and Phuoc-Tai Nguyen are supported by Czech Science Foundation, project GJ19-14413Y.
\section{Preliminaries}
Let $\{\bs{e}_1,\cdots,\bs{e}_m\}$ be a standard orthonormal basis for the $m$-dimensional Euclidean space $\R^m$. Suppose $\bx$ and $\ba$ are two vectors on the unit sphere $\mathbb{S}^{m-1}$, we now show that a reflection of $\bx$ across the hyperplane perpendicular to $\ba$ can simply be expressed as $\ba\bx\ba$. This simplifies the calculation significantly later in this article. To explain this one needs Clifford analysis and Clifford algebras.
\par
The (real) Clifford algebra $\mathcal{C}l_m$ is generated by $\R^m$ with the relationship $\bs{e}_i\bs{e}_j+\bs{e}_j\bs{e}_i=-2\delta_{ij},\ 1\leq i,j\leq m.$ Hence,
an element of the basis of the Clifford algebra can be written in the form $\bs{e}_A=\bs{e}_{j_1}\cdots \bs{e}_{j_r},$ where $A=\{j_1, \cdots, j_r\}\subset \{1, 2, \cdots, m\}$ and $1\leq j_1< j_2 < \cdots < j_r \leq m$. 
Obviously, the $m$-dimensional Euclidean space $\R^m$ can be embedded into $\mathcal{C}l_m$ with the mapping
$\bx=(x_1,\cdots,x_m)\ \mapsto\quad \sum_{j=1}^mx_j\bs{e}_j.$
For $\bx\in\R^m$, one can easily verify that $|\bx|^2=\sum_{j=1}^mx_j^2=-\bx^2$. 
The complex Clifford algebra $\mathcal{C}l_m(\mathbb{C})$ can be realized as $\mathcal{C}l_m(\mathbb{C})=\mathcal{C}l_m\otimes\mathbb{C}$. 
\par
Suppose that $\ba\in \mathbb{S}^{m-1}\subseteq \mathbb{R}^m$, if we consider $\ba\bx\ba$, we may decompose
$$\bx=\bx_{\ba\parallel}+\bx_{\ba\perp},$$
where $\bx_{\ba\parallel}$ stands for the projection of $\bx$ onto $\ba$ and $\bx_{\ba\perp}$ is the rest, perpendicular to $a$. Hence $\bx_{\ba\parallel}$ is a scalar multiple of $\ba$ and we have
$$\ba\bx\ba=\ba\bx_{\ba\parallel}\ba+\ba\bx_{\ba\perp}\ba=-\bx_{\ba\parallel}+\bx_{\ba\perp}.$$
So the action $\ba\bx\ba$ represents a reflection of $\bx$ across the hyperplane perpendicular to $\ba$.
\par
\par
Let $\HK(\C)$ ($1 \leq k \in {\mathbb N}$) be the space of complex-valued homogeneous harmonic polynomials of degree $k$ in $m$-dimensional Euclidean space. If we consider a function $f(\bx,\bu)\in C^{\infty}(\R^m\times\R^m,\HK(\C))$, i.e., for a fixed $\bx\in\R^m$, $f(\bx,\bu)\in\HK(\C)$ with respect to $\bu \in \R^m$. Recall that bosonic Laplacians \cite{Eelbode} are defined as 
\begin{eqnarray}\label{Dtwo}
&&\Dtwo:\ C^{\infty}(\R^m\times\R^m,\HK(\C))\longrightarrow C^{\infty}(\R^m\times\R^m,\HK(\C)),\nonumber\\
&&\Dtwo=\Delta_{\bx}-\frac{4\langle \bu,\nabla_{\bx}\rangle\langle \nabla_{\bu},\nabla_{\bx}\rangle}{m+2k-2}+\frac{4|\bu|^2\langle \nabla_{\bu},\nabla_{\bx}\rangle^2}{(m+2k-2)(m+2k-4)},
\end{eqnarray}
where $\langle\ ,\ \rangle$ is the standard inner product in $\R^m$, $\nabla_{\bx}$ is the gradient with respect to $\bx$. In particular, $
\mathcal{D}_1=\Delta_{\bx}-\frac{4}{m}\langle \bu,\nabla_{\bx}\rangle\langle \nabla_{\bu},\nabla_{\bx}\rangle
$
 is the generalized Maxwell operator. Further, it reduces to the source-free classical Maxwell equations given in terms of the Faraday-tensor when $m=4,\ k=1$ with signature $(-,+,+,+)$. More details can be found in \cite{Eelbode}. A Green's formula for $\Dtwo$ is stated as follows.
\begin{theorem}[Green's formula: scalar-valued version]\cite[Theorem 5]{DR1}\label{Green}
Let $\Omega\subset\R^m$ be an open domain, $\partial\Omega$ is piecewise smooth, and $f,g\in C^2(\Omega\times\Bm,\HK(\R))$. Then, we have
\begin{align*}
&\int\displaylimits_{\Omega}\int\displaylimits_{\Smone}(\Dtwo f(\bx,\bu))g(\bx,\bu)-f(\bx,\bu)(\Dtwo g(\bx,\bu))dS(\bu)d\bx\\
=&\int\displaylimits_{\partial\Omega}\int\displaylimits_{\Smone}(Af)(\bx,\bu)g(\bx,\bu)-f(\bx,\bu)(Ag)(\bx,\bu)dS(\bu)d\sigma(\bx),
\end{align*}
 where $\sigma(x)$ is the area element on $\partial\Omega$ and $n_{\bx}$ is the outward unit normal vector on $\partial\Omega$ and
 \be
 A=\frac{\partial}{\partial n_{\bx}}-\frac{4\langle\bu,n_{\bx}\rangle\langle D_{\bu},D_{\bx}\rangle}{m+2k-2}.
 \ee
\end{theorem} 
 

\section{Polynomials null solutions to bosonic Laplacians}
In \cite{DeBie}, the authors provided a description for homogeneous polynomial null solutions to bosonic Laplacians by applying a decomposition of a tensor product of two representations of the spin group. In this section, we will investigate the space of homogeneous polynomial null solutions to bosonic Laplacians with an analytic approach. More specifically, we firstly study series expansions of solutions to bosonic Laplacians in terms of homogeneous polynomial null solutions in Euclidean spaces. This leads to decompositions of the space of homogeneous polynomials and a certain $L^2$ space in terms of these homogeneous polynomial null solutions in Euclidean spaces. Then, we show that the restriction of these homogeneous polynomial null solutions to the unit sphere gives us analogs of spherical harmonics. A reproducing kernel and an orthogonality property for these homogeneous polynomial null solutions are also introduced here. This section provides us needed tools to investigate bosonic Bergman spaces in the next section.
\par
First, let us introduce some notations. Let $\mathcal{P}(\R^m\times\Bm,\HK(\mathbb{C}))$ be the space of polynomials $f(\bx,\bu)$ such that $f$ is a polynomial in $\bx$ and $\bu$, and for each fixed $\bx\in\R^m$, $f(\bx,\cdot)\in\HK(\mathbb{C})$ with respect to $\bu$. Let $\mathcal{P}_l(\R^m\times\Bm,\HK(\mathbb{C}))$ be the subspace of $\mathcal{P}(\R^m\times\Bm,\HK(\mathbb{C}))$ such that $f$ is a homogeneous polynomial with respect to $\bx$ with degree $l$. We also denote $\BLK=\mathcal{P}_l(\R^m\times\Bm,\HK(\mathbb{C}))\cap\ker\Dtwo$.
\subsection{Real analyticity and homogeneous expansions}
Recall that harmonic functions are real analytic, which means that any harmonic function can be locally expressed as a power series. In this section, we will prove that we also have real analyticity for null solutions to bosonic Laplacians. 
\begin{theorem}\label{analytic}
Let $\Omega\subset\R^m$ be a connected, open bounded domain and $f\in C^2(\Omega\times\Bm,\HK(\mathbb{C}))$. If $\Dtwo f=0$ in $\Omega\times\Bm$, then $f$ is real analytic with respect to $\bx$ in $\Omega$.
\end{theorem}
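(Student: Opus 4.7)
The plan is to exploit Cauchy-type estimates for null solutions of $\mathcal{D}_k$ proved by the authors in \cite{DTR}: specifically, to show that at every interior point, all partial derivatives of $f$ with respect to $\bx$ grow at most like $C^{|\alpha|}\,|\alpha|!\,/\,r^{|\alpha|}$ uniformly in $\bu$, which then yields convergence of the Taylor series of $f(\cdot,\bu)$. First I would fix $\bx_0 \in \Omega$, choose $r>0$ with $\overline{B(\bx_0, 2r)} \subset \Omega$, and note that because $\HK(\C)$ is finite-dimensional, real-analyticity of $f$ in $\bx$ reduces to a uniform (in $\bu \in \Bm$) bound on $\partial_{\bx}^{\alpha} f(\bx_0, \bu)$.

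The key structural observation is that, with respect to $\bx$, the operator $\mathcal{D}_k$ has \emph{constant} coefficients: every coefficient appearing in \eqref{Dtwo} depends only on $\bu$ and $D_{\bu}$. Therefore $\partial_{x_j}$ commutes with $\mathcal{D}_k$, and $\partial_{\bx}^{\alpha} f$ is again a null solution of $\mathcal{D}_k$ on $\Omega \times \Bm$ for every multi-index $\alpha$. Iterating the first-order Cauchy estimate from \cite{DTR} applied to each $\partial_{\bx}^{\alpha} f$ then yields, by the same combinatorial bookkeeping used for classical harmonic functions, an estimate of the form
\begin{equation*}
\sup_{\bu \in \Bm} \bigl|\partial_{\bx}^{\alpha} f(\bx_0, \bu)\bigr| \leq \frac{C^{|\alpha|}\,|\alpha|!}{r^{|\alpha|}} \, \sup_{\overline{B(\bx_0, 2r)} \times \Bm}|f|,
\end{equation*}
with $C = C(m,k) > 0$.

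With this bound in hand, the Taylor series $\sum_{\alpha} (\alpha!)^{-1} \partial_{\bx}^{\alpha} f(\bx_0, \bu) (\bx-\bx_0)^{\alpha}$ converges absolutely and uniformly on $\{\,\bx : |\bx-\bx_0| < r/(mC+1)\,\} \times \Bm$ by the multinomial theorem and a geometric-series argument. Taylor's theorem with integral remainder, together with the same factorial bound applied to the remainder term, shows that the partial sums converge pointwise to $f(\bx, \bu)$ in that neighborhood. Because $\bx_0 \in \Omega$ was arbitrary, $f$ is then real analytic in $\bx$ on $\Omega$.

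The main obstacle is verifying that the Cauchy estimate in \cite{DTR} is in the form required for the iteration, namely a bound $|\partial_{x_j} g(\bx_0, \bu)| \leq (c/r) \sup |g|$ on a slightly larger ball, with $c$ independent of $j$ and of $g$ among $\mathcal{D}_k$-null solutions. Once that estimate is available, the iteration to the all-order inequality displayed above is entirely routine. In case the reference does not immediately give the needed form, a natural backup is to expand $f(\bx, \bu) = \sum_{j=1}^{\dim \HK(\C)} c_j(\bx) P_j(\bu)$ in a fixed basis of $\HK(\C)$, rewrite $\mathcal{D}_k f = 0$ as a linear second-order system with constant coefficients in $\bx$ for the vector $(c_j)_j$, check ellipticity of its principal symbol directly from \eqref{Dtwo}, and then invoke the classical result that solutions of elliptic systems with constant coefficients are real analytic.
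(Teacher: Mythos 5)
Your primary route is genuinely different from the paper's. The paper argues globally: it inserts into the Poisson integral formula of \cite{DTR} an absolutely convergent power-series expansion in $\bx$ of the Poisson kernel (the classical expansion of $(1-|\bx|^2)/|\bx-\ze|^m$ combined with an expansion of the reflected argument $(\bx-\ze)\bu(\bx-\ze)/|\bx-\ze|^2$ composed with an orthonormal basis of $\HK(\C)$), then integrates term by term to obtain a local power series for $f$; this has the side benefit of producing the kernel expansion that is reused later (Proposition \ref{PJ}). Your plan --- observe that $\partial_{\bx}^{\alpha}$ commutes with $\Dtwo$ since the coefficients in \eqref{Dtwo} do not involve $\bx$, then iterate a first-order Cauchy estimate on nested balls to reach $C^{|\alpha|}|\alpha|!\,r^{-|\alpha|}$ bounds --- is the classical Evans-style local argument, and the commutation observation is correct. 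Two caveats: you must first know that $f$ is $C^{\infty}$ in $\bx$ before differentiating at all orders (in \cite{DTR} this itself comes from the Poisson/mean-value representation, so your route is not independent of that machinery), and the supremum on the right-hand side must indeed be taken over the second variable ranging over all of $\Bm$, because the mean-value property of \cite{DTR} moves the second argument by an $\bx$-dependent reflection; your $\sup_{\bu\in\Bm}$ absorbs this. Modulo the verification you yourself flag, this would be a legitimate alternative proof.

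The genuine gap is in your backup plan: $\Dtwo$ is \emph{not} elliptic for all the $(m,k)$ the paper admits, so ``check ellipticity of the principal symbol'' can fail. Concretely, for $k=1$ the third term in \eqref{Dtwo} annihilates functions of degree one in $\bu$, and a direct computation gives
\begin{equation*}
\Dtwo\bigl(h(x_1)\,u_1\bigr)=\Bigl(1-\tfrac{4}{m}\Bigr)h''(x_1)\,u_1 ,
\end{equation*}
so in the generalized Maxwell case $m=4$, $k=1$ --- the very case highlighted in the paper's preliminaries --- the symbol degenerates in the direction $u_1$. Indeed, identifying $f=\sum_j c_j(\bx)u_j$ with the vector field $(c_j)_j$, for $m=4$ the operator becomes $\Delta c-\nabla(\nabla\cdot c)$, which is gauge invariant: $f(\bx,\bu)=\langle\nabla\chi(\bx),\bu\rangle$ is a null solution for \emph{every} $C^3$ scalar $\chi$. (A similar degeneracy occurs for $k=2$, where the symbol's eigenvalue on the zonal harmonic is a multiple of $(m-2)(m-4)$.) Taking $h$, or $\chi$, bounded but non-analytic does more than break your fallback: it produces $C^2$ null solutions that are not real analytic in $\bx$, and against which no unconditional interior Cauchy estimate of the form $|\nabla_{\bx}g|\leq (c/r)\sup|g|$ can hold either. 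The lesson is that in the degenerate cases no purely local argument can succeed, and the theorem stands or falls with the global representation and uniqueness results imported from \cite{DTR}, together with whatever restrictions on $(m,k)$ those carry. You should therefore drop the ellipticity fallback and, as the paper does, tie the proof explicitly to the Poisson-integral machinery.
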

\begin{proof}
It suffices to show that if $\Dtwo f=0$ in $\Bm\times\Bm$, then $f$ has a power series expansion converging to $f$ in a neighborhood of $0$. The main idea is to use the Poisson integral formula and the series expansion of the Poisson kernel. Recall that if $f(\bx,\bu)\in C^2(\Omega\times\Bm,\HK(\mathbb{C}))$ and $\Dtwo f=0$ in $\Omega\times\Bm$, the Poisson integral formula \cite[Section 5.2]{DTR} is given by
\begin{eqnarray*}
f(\bx,\bv)=\frac{c_{m,k}}{2}\int_{\Smone}\int_{\Smone}\frac{1-|\bx|^2}{|\bx-\ze|^m}Z_k\bigg(\frac{(\bx-\ze)\bu(\bx-\ze)}{|\bx-\ze|^2},\bv\bigg)f(\ze,\bu)dS(\bu)dS(\ze),
\end{eqnarray*}
where $Z_k(\bu,\bv)$ is the reproducing kernel of $k$-homogeneous harmonic polynomials in the following sense
\be
g(\bv)=\int_{\Smone}Z_k(\bu,\bv)g(\bu)dS(\bu),\quad \text{for\ all}\
 g\in\HK(\mathbb{C}).
\ee
\par
On the one hand, we already knew that there is a series expansion, which converges absolutely, for the Poisson kernel of Laplacian $\frac{1-|\bx|^2}{|\bx-\ze|^m}=\sum_{|\bs{\alpha}|=0}^{\infty}p_{\bs{\alpha}}(\ze)\bx^{\bs{\alpha}}$, where $|\bx|<\sqrt{2}-1$ and $\ze\in\Smone$, see \cite[Theorem 1.28]{Axler}. On the other hand, let $\et=\frac{(\bx-\ze)\bu(\bx-\ze)}{|(\bx-\ze)|^2}$, and $\{\varphi_j\}_{j=1}^{d_k}$ be an orthonormal basis for $\HK(\mathbb{C})$, where $d_k=\dim\HK(\mathbb{C})$. Then we have \cite[(5.28)]{Axler} $Z_k(\et,\bv)=\sum_{j=1}^{d_k}\overline{\varphi_j(\et)}\varphi_j(\bv)$. Since $\varphi_j$ is a homogeneous harmonic polynomial of degree $k$, and $\et$ has no singular point for $\bx\in U$, $\ze\in\Smone$, we find that $\varphi_j(\et)$  is analytic for $\bx\in \Bm$. More specifically, the local series expansion of $\varphi_j(\et)$ can be obtained by plugging the series expansion for $\et=\frac{(\bx-\ze)\bu(\bx-\ze)}{|\bx-\ze|^2}$ into $\varphi_j$. Notice that 
\be
\et=\frac{(\bx-\ze)\bu(\bx-\ze)}{|\bx-\ze|^2}=\bu-2\langle\bx-\ze,\bu\rangle|\bx-\ze|^{-2}=\bu-2\langle\bx-\ze,\bu\rangle\sum_{s=0}^{\infty}c_s(|\bx|^2-2\langle\bx,\ze\rangle)^s,
\ee
where $\sum_{s=0}^{\infty}c_s(t-1)^s$ is the Taylor series of $t^{-2}$ on $(0,2)$ at point $t=1$. This series converges to $\et$ absolutely in  $|\bx|<\sqrt{2}-1, \bu\in\overline{\Bm}$ with a similar argument as in \cite[Theorem 1.28]{Axler}. Now, we plug the expansion of $\et$ into $\varphi_j(\et)$ and then rearrange it to obtain $\varphi_j(\et)=\sum_{|\bs{\beta}|=0}^{\infty}q_{\bs{\beta}}(\ze,\bu)\bx^{\bs{\beta}}$.
Now, we have series expansions for $\frac{1-|\bx|^2}{|\bx-\ze|^m}$ and $\varphi_j(\et)$, and both series converge absolutely for $|\bx|<\sqrt{2}-1, \bu,\bv\in\overline{\Bm}$. This allows us to calculate 
\be
&&\frac{1-|\bx|^2}{|\bx-\ze|^m}Z_k(\et,\bv)=\sum_{j=1}^{d_k}\frac{1-|\bx|^2}{|\bx-\ze|^m}\varphi_j(\et)\varphi_j(\bv)\\
&=&\sum_{j=1}^{d_k}\sum_{\bs{|\alpha|=0}}^{\infty}p_{\bs{\alpha}}(\ze)\bx^{\bs{\alpha}}\sum_{|\bs{\beta}|=0}^{\infty}q_{\bs{\beta}}(\ze,\bu)\bx^{\bs{\beta}}\varphi_j(\bv)=\sum_{j=1}^{d_k}\sum_{\bs{|\gamma|=0}}^{\infty}h_{\bs{\gamma}}(\ze,\bu)\bx^{\bs{\gamma}}\varphi_j(\bv)\\
&=&:\sum_{\bs{|\gamma|=0}}^{\infty}g_{\bs{\gamma}}(\ze,\bu,\bv)\bx^{\bs{\gamma}},
\ee
where $h_{\bs{\gamma}}(\ze,\bu)=\sum_{|\bs{\alpha}|+|\bs{\beta}|=|\bs{\gamma}|}p_{\bs{\alpha}}(\ze)q_{\bs{\beta}}(\ze,\bu)$ and $g_{\bs{\gamma}}(\ze,\bu,\bv)$ is a $k$-homogeneous harmonic  polynomial with respect to $\bv$. Further, this series expansion for the Poisson kernel also converges absolutely when $|\bx|<\sqrt{2}-1, \bu,\bv\in\overline{\Bm}$. Now, we plug this series expansion back into the Poisson integral formula to obtain
\begin{align*}
f(\bx,\bv)&=\frac{c_{m,k}}{2}\int_{\Smone}\int_{\Smone}\sum_{\bs{|\gamma|=0}}^{\infty}g_{\bs{\gamma}}(\ze,\bu,\bv)\bx^{\bs{\gamma}}f(\ze,\bu)dS(\bu)dS(\ze)\\
&=\bigg[\frac{c_{m,k}}{2}\int_{\Smone}\int_{\Smone}\sum_{\bs{|\gamma|=0}}^{\infty}g_{\bs{\gamma}}(\ze,\bu,\bv)f(\ze,\bu)dS(\bu)dS(\ze)\bigg]\bx^{\bs{\gamma}}\\
&=\sum_{\bs{|\gamma|=0}}^{\infty}\bigg[\frac{c_{m,k}}{2}\int_{\Smone}\int_{\Smone}g_{\bs{\gamma}}(\ze,\bu,\bv)f(\ze,\bu)dS(\bu)dS(\ze)\bigg]\bx^{\bs{\gamma}}
=:C_{\bs{\gamma}}(\bv)\bx^{\bs{\gamma}},
\end{align*}
where $C_{\bs{\gamma}}(\bv)\in\HK(\bv,\C)$. In the last second step we interchange the integral and infinite sum because of the facts that $\sum_{\bs{|\gamma|=0}}^{\infty}g_{\bs{\gamma}}(\ze,\bu,\bv)\bx^{\bs{\gamma}}$ converges absolutely when $|\bx|<\sqrt{2}-1, \bu,\bv\in\overline{\Bm}$ and $f$ is a polynomial, which allow us to apply Fubini's Theorem.
\end{proof}
The real analyticity for null solutions to bosonic Laplacians allows us to rewrite these solutions as an infinite sum of homogeneous polynomials. This result is crucial for obtaining a decomposition for polynomial null solutions to bosonic Laplacians in the next subsection.
\begin{proposition}\label{homoexp}
Let $f\in C^2(\Omega\times\Bm,\HK(\mathbb{C}))$ and $\Dtwo f=0$ in $\Omega\times\Bm$. Then, given $\bs{a}\in\Omega$, we have $f(\bx,\bu)=\sum_{l=0}^{\infty}f_l(\bx-\bs{a},\bu)$, where $f_l(\bx,\bu)\in\mathcal{B}_l(\Omega\times\Bm,\HK(\mathbb{C}))$.
\end{proposition}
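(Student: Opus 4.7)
The plan is to combine the real analyticity furnished by Theorem \ref{analytic} with two structural observations about $\Dtwo$: (i) as a differential operator in $\bx$ it has coefficients that depend only on $\bu$, hence commutes with translations in $\bx$; and (ii) each of its three constituent terms $\Delta_{\bx}$, $\lb\bu,D_{\bx}\rb\lb D_{\bu},D_{\bx}\rb$, and $|\bu|^2\lb D_{\bu},D_{\bx}\rb^2$ involves exactly two derivatives in $\bx$, so $\Dtwo$ maps an $l$-homogeneous polynomial in $\bx$ to an $(l-2)$-homogeneous one. The argument is then the higher-spin analogue of the classical fact that the homogeneous components of a harmonic function are themselves harmonic.

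First I fix $\bs a\in\Omega$ and choose $r>0$ with $B(\bs a,r)\subset\Omega$. By Theorem \ref{analytic}, $f$ is real analytic in $\bx$, so on $B(\bs a,r)$ it coincides with its Taylor series at $\bs a$,
\bal
f(\bx,\bu)=\sum_{|\bs\alpha|\geq 0}\frac{1}{\bs\alpha!}\,\partial_{\bx}^{\bs\alpha}f(\bs a,\bu)\,(\bx-\bs a)^{\bs\alpha},
\ea
with absolute and locally uniform convergence on $B(\bs a,r)\times\overline{\Bm}$. Grouping terms by total $\bx$-degree, I define
\bal
f_l(\by,\bu):=\sum_{|\bs\alpha|=l}\frac{1}{\bs\alpha!}\,\partial_{\bx}^{\bs\alpha}f(\bs a,\bu)\,\by^{\bs\alpha},
\ea
so that $f(\bx,\bu)=\sum_{l\geq 0}f_l(\bx-\bs a,\bu)$. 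Since $f(\bs a,\cdot)\in\HK(\C)$ is a polynomial in $\bu$ and $\partial_{\bx}^{\bs\alpha}$ commutes with $\Delta_{\bu}$, each coefficient $\partial_{\bx}^{\bs\alpha}f(\bs a,\cdot)$ is again a $k$-homogeneous harmonic polynomial in $\bu$. Hence $f_l$ is polynomial in $(\by,\bu)$, $l$-homogeneous in $\by$, and $\HK(\C)$-valued, so $f_l\in\PLK$.

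It remains to check that $\Dtwo f_l=0$. By observation (ii), $\Dtwo f_l$ is a polynomial in $\by$ of degree exactly $l-2$ (and identically zero when $l\in\{0,1\}$). Termwise application of $\Dtwo$ to the Taylor series, justified by its absolute and locally uniform convergence together with the corresponding uniform convergence of its $\bx$-derivatives up to order $2$, yields
\bal
0=\Dtwo f(\bx,\bu)=\sum_{l\geq 2}\Dtwo f_l(\bx-\bs a,\bu)\qquad\text{on }B(\bs a,r)\times\overline{\Bm}.
\ea
The summands are polynomials of pairwise distinct $\by$-degrees, so uniqueness of the homogeneous decomposition of the zero polynomial forces $\Dtwo f_l\equiv 0$ on $B(\bs a,r)\times\overline{\Bm}$; as $f_l$ is a polynomial on the whole of $\R^m\times\Bm$, this identity extends globally, giving $f_l\in\BLK$.

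The main technical point, and the only thing requiring any care, is the legitimacy of moving $\Dtwo$ inside the infinite sum and then separating the resulting series by $\bx$-homogeneity. Both are standard consequences of the locally uniform convergence of the Taylor expansion (inherited from the proof of Theorem \ref{analytic}, which produces the expansion explicitly from the Poisson representation) combined with the constant-$\bx$-coefficient, purely second-order nature of $\Dtwo$ in $\bx$. Once these are in place, the homogeneous decomposition is forced term by term.
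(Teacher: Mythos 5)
Your proof is correct and follows essentially the same route as the paper's: Taylor-expand at $\bs{a}$ via Theorem \ref{analytic}, apply $\Dtwo$ termwise to the convergent series, and force $\Dtwo f_l=0$ by separating homogeneous components of distinct $\bx$-degree. The only cosmetic difference is that the paper first expands each $\Dtwo f_l$ in an orthonormal basis of $\HK(\C)$ in $\bu$ before separating by $\bx$-degree, whereas you separate the $\HK(\C)$-valued components directly; your added care (the $1/\bs{\alpha}!$ normalization, the fact that $\partial_{\bx}^{\bs{\alpha}}f(\bs{a},\cdot)$ stays in $\HK(\C)$, and the extension from the neighborhood to all of $\R^m\times\Bm$ by polynomiality) is sound and, if anything, tightens the write-up.
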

\begin{proof}
Without loss of generality, we assume that $\bs{a}$ is the origin, because for arbitrary $\bs{a}\in \Omega$, one can obtain the result by a translation. We denote
$
f_l(\bx,\bu)=\sum_{|\bs{\alpha}|=l}\frac{\partial^{\bs{\alpha}}f(0,\bu)}{\partial\bx^{\bs{\alpha}}}\bx^{\bs{\alpha}},
$
 and it is easy to see that $f_l\in \HK(\bu,\C)$. From the previous theorem, we know that there exists a neighborhood $U$ of the origin, such that $f(\bx,\bu)=\sum_{l=0}^{\infty}f_l(\bx,\bu)$ for $\bx\in U$.
 \par
 Further, we notice that 
 \begin{eqnarray}\label{df}
 0=\Dtwo f=\Dtwo \sum_{l=0}^{\infty}f_l= \sum_{l=0}^{\infty}\Dtwo f_l.
 \end{eqnarray}
  Since $\Dtwo f_l\in\mathcal{P}_{l-2}(\Omega\times\Bm,\HK(\mathbb{C}))$, which implies that if $\{\varphi_j(\bu)\}_{j=1}^{d_{k}}$ is an orthonormal basis for $\mathcal{H}_{k}(\bu)$, then we can rewrite $\Dtwo f_l(\bx,\bu)=\sum_{j=1}^{d_{k}}a_{j,l-2}(\bx)\varphi_j(\bu)$, where $a_{j,l-2}(\bx)$ is a homogeneous polynomial of $\bx$ with degree $l-2$. Then \eqref{df} gives
  $
  \sum_{l=0}^{\infty}\sum_{j=1}^{d_{k}}a_{j,l-2}(\bx)\varphi_j(\bu)=0.
  $
  However, since $\{\varphi_j(\bu)\}_{j=1}^{d_{k}}$ is an orthonormal basis, we obtain
   $
  \sum_{l=0}^{\infty}a_{j,l-2}(\bx)=0,\ \text{for\ all}\ j,
 $
  which implies that $a_{j,l-2}(\bx)=0$ for all $j,l$. This is equivalent to $\Dtwo f_l=0$ for all $l$, which completes the proof.
\end{proof}

\subsection{A polynomial decomposition for an $L^2$ space on $\Smone\times\Smone$}
Recall that the Poisson integral regarding to bosonic Laplacians is given by
\begin{eqnarray}\label{PI}
P[f](\bx,\bv)=\frac{c_{m,k}}{2}\int_{\Smone}\int_{\Smone}\frac{1-|\bx|^2}{|\bx-\ze|^m}Z_k\bigg(\frac{(\bx-\ze)\bu(\bx-\ze)}{|(\bx-\ze)|^2},\bv\bigg)f(\ze,\bu)dS(\bu)dS(\ze),
\end{eqnarray}
where $f\in\C(\Smone\times\Smone,\HK(\mathbb{C}))$, see \cite[(3.13)]{DTR}. 
Our result below shows that the Poisson integral of a polynomial is a polynomial of a special form.
\begin{proposition}\label{d1}
Let $f(\bx,\bu)\in \mathcal{P}_l(\Bm\times\Bm,\HK(\mathbb{C}))$, then
$P[f\big\vert_{\Smone}]=(1-|\bx|^2)g+f$,
for some polynomial $g(\bx,\bu)$ satisfying $\deg_{\bx}g\leq l-2$ and $g\in\HK(\bu,\C)$ for each fixed $\bx$.  
\end{proposition}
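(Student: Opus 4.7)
The goal is to construct a polynomial $h = f + (1-|\bx|^2) g$ with $\deg_{\bx} g \le l-2$ and $g(\bx,\cdot) \in \HK(\C)$, which satisfies $\Dtwo h = 0$ on $\Bm \times \Bm$. Because the factor $(1-|\bx|^2)$ vanishes on $\Smone$, such $h$ automatically agrees with $f$ on $\Smone \times \Smone$; hence the Poisson integral representation used in the proof of Theorem \ref{analytic}, which recovers every $C^2$ null solution of $\Dtwo$ from its spherical boundary values, forces $h = P[f\vert_{\Smone}]$, giving the claimed representation.

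The construction rests on a Fischer-type decomposition
$$\mathcal{P}_l(\R^m \times \Bm, \HK(\C)) \;=\; \mathcal{B}_l \,\oplus\, |\bx|^2\, \mathcal{P}_{l-2}(\R^m \times \Bm, \HK(\C)),$$
the $\Dtwo$-analogue of the classical identity $\mathcal{P}_l(\R^m) = \mathcal{H}_l \oplus |\bx|^2 \mathcal{P}_{l-2}(\R^m)$. It may be obtained either from the dimension count for $\mathcal{B}_l$ implicit in \cite{DeBie}, or from a Fischer-pairing/commutator argument based on an identity of the form
$$\Dtwo(|\bx|^2 w) \;=\; c_{j,m,k}\, w \;+\; |\bx|^2\, \Dtwo w \;+\; (\text{higher } |\bx|^2\text{-filtration}), \qquad w \in \mathcal{P}_j,$$
with a nonzero scalar $c_{j,m,k}$. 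Iterating this decomposition yields $f = \sum_{j=0}^{\lfloor l/2\rfloor} |\bx|^{2j}\, h_{l-2j}$ with each $h_{l-2j} \in \mathcal{B}_{l-2j}$.

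Now set $h := \sum_{j=0}^{\lfloor l/2\rfloor} h_{l-2j}$. Each summand lies in $\ker \Dtwo$, hence so does $h$; and since $|\bx|^{2j} = 1$ on $\Smone$, we read off $h\vert_{\Smone \times \Smone} = f\vert_{\Smone \times \Smone}$. A direct calculation gives
$$h - f \;=\; \sum_{j \ge 1} (1 - |\bx|^{2j})\, h_{l-2j} \;=\; (1-|\bx|^2) \sum_{j \ge 1} \bigg(\sum_{i=0}^{j-1} |\bx|^{2i}\bigg) h_{l-2j}.$$
Defining $g := \sum_{j \ge 1} \big(1 + |\bx|^2 + \cdots + |\bx|^{2(j-1)}\big)\, h_{l-2j}$, each summand has $\bx$-degree $(l-2j) + 2(j-1) = l-2$, so $\deg_{\bx} g \le l-2$; and $g(\bx,\cdot) \in \HK(\C)$ because the prefactors depend only on $|\bx|$ and each $h_{l-2j}(\bx,\cdot) \in \HK(\C)$. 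Therefore $h = f + (1-|\bx|^2) g$ has the desired form, and the uniqueness argument from the first paragraph completes the proof.

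The main obstacle is the Fischer-type decomposition. The scalar argument in the classical Laplacian setting (based on $\Delta_{\bx}(|\bx|^2 w) = (2m + 4 \deg_{\bx} w)\, w + |\bx|^2 \Delta_{\bx} w$) must be generalized; the extra $\bu$-dependent terms $\langle \bu, D_{\bx}\rangle \langle D_{\bu}, D_{\bx}\rangle$ and $|\bu|^2 \langle D_{\bu}, D_{\bx}\rangle^2$ in $\Dtwo$ must be commuted past $|\bx|^2$ via the product rule, and the resulting scalar $c_{j,m,k}$ verified to be nonzero in the full range of $j$, $m$, $k$ under consideration. Once this algebraic input is in hand, the remaining steps are purely formal rearrangements.
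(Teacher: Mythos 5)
Your reduction of the statement to a Fischer-type decomposition is formally sound, but the decomposition itself is precisely the gap: you flag it as ``the main obstacle'' and leave it unproven, and the commutator identity you propose to prove it with is not of the claimed form. A direct computation shows the cross terms of $\Dtwo$ do not commute past $|\bx|^2$ up to a scalar plus $|\bx|^2$-multiples: for instance
\begin{equation*}
\langle D_{\bu},D_{\bx}\rangle\big(|\bx|^2 w\big)=2\langle \bx,D_{\bu}\rangle w+|\bx|^2\langle D_{\bu},D_{\bx}\rangle w,
\end{equation*}
so $\Dtwo(|\bx|^2w)$ contains genuinely mixed terms such as $\langle\bu,D_{\bx}\rangle\langle\bx,D_{\bu}\rangle w$ and $\langle\bx,\bu\rangle\langle D_{\bu},D_{\bx}\rangle w$, which are neither scalar multiples of $w$ nor in the $|\bx|^2$-filtration; there is no single nonzero constant $c_{j,m,k}$ to verify, and making this route work is essentially the nontrivial algebraic content of \cite{DeBie}. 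There is also a circularity hazard you could not have seen: in this paper the decomposition $\PLK=\BLK\oplus|\bx|^2\mathcal{P}_{l-2}(\R^m\times\Bm,\HK(\mathbb{C}))$ (Proposition \ref{polyD}, and its injectivity ingredient Proposition \ref{prop5.4}) is \emph{deduced from} Proposition \ref{d1}, so your argument is only non-circular if the decomposition is imported wholesale from the representation-theoretic work in \cite{DeBie} --- which defeats the section's stated aim of an analytic, self-contained approach.

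The paper's proof avoids the decomposition entirely, and you could have too: it suffices to produce $g$ with $\Dtwo\big((1-|\bx|^2)g\big)=-\Dtwo f$, since then $(1-|\bx|^2)g+f$ is a null solution agreeing with $f$ on $\Smone$, and uniqueness of the Dirichlet problem \cite[Section 4]{DTR} identifies it with $P[f\big\vert_{\Smone}]$ (this last step matches your first paragraph). Let $W$ be the finite-dimensional space of polynomials $\HK(\bu,\C)$-valued in $\bu$ with $\deg_{\bx}\leq l-2$, and consider $T:W\to W$, $Tg=\Dtwo\big((1-|\bx|^2)g\big)$. If $Tg=0$, then $(1-|\bx|^2)g$ is a null solution vanishing on $\Smone$, hence identically zero by the Poisson integral formula, so $g=0$; injectivity plus finite-dimensionality gives surjectivity, and since $\Dtwo f\in W$, the required $g$ exists. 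This soft linear-algebra argument, powered only by the uniqueness theorem you already invoke, replaces the entire Fischer machinery; your proposal as written does not constitute a complete proof without it.
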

\begin{proof}
When $l=0,1$, the result is obviously true with $g=0$. Now, we assume that $l\geq 2$. Firstly, it is easy to observe that $(1-|\bx|^2)g+f=f$ when $\bx\in\Smone$. Hence,  with the boundary data $f\big\vert_{\Smone}$, if we can find a polynomial $g(\bx,\bu)$ satisfying $\deg_{\bx}g\leq l-2$, $g(\bx,\cdot)\in\HK(\bu,\C)$ for each fixed $\bx$ and $\Dtwo \big((1-|\bx|^2)g+f\big)=0$, i.e., $\Dtwo(1-|\bx|^2)g=-\Dtwo f$, then from the uniqueness of the Dirichlet problem for bosonic Laplacians on the unit ball \cite[Section 4]{DTR}, we immediately have $P[f\big\vert_{\Smone}]=(1-|\bx|^2)g+f$. 
\par
Now, let $W$ stand for the vector space of polynomials $f(\bx,\bu)$ satisfying $\deg_{\bx}f\leq l-2$ and $f(\bx,\cdot)\in\HK(\bu,\C)$ for each fixed $\bx$. We have the following linear map
\be
T:\ W&\longrightarrow& W,\\
             g&\mapsto& \Dtwo(1-|\bx|^2)g.
\ee
Suppose that $T g=0$, which implies that $\Dtwo(1-|\bx|^2)g=0$. However, $(1-|\bx|^2)g=0$ when $\bx\in\Smone$, then according to the Poisson integral formula, $(1-|\bx|^2)g=0$ for all $\bx\in\R^m$, i.e., $g=0$ for all $\bx\in\Bm$. This implies that the linear map $T$ is injective. Since $W$ is finite-dimensional, then the linear map $T$ is also surjective. Therefore, we just proved that given an $f\in  \mathcal{P}_l(\Bm\times\Bm,\HK(\mathbb{C}))$, which implies $\Dtwo f\in W$, there exists a function $g\in W$, such that $\Dtwo(1-|\bx|^2)g=-\Dtwo f$ as desired.
\end{proof}

The following result is crucial in our proof of a direct sum decomposition of certain polynomial spaces below.
\begin{proposition}\label{prop5.4}
Let $f(\bx,\bu)\in\mathcal{P}(\Bm\times\Bm,\HK(\mathbb{C}))$, then $\Dtwo|\bx|^2f\not\equiv 0$ in $\R^m\times\Bm$.
\end{proposition}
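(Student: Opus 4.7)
The plan is to establish the contrapositive: if $\Dtwo(|\bx|^2 f)\equiv 0$ on $\R^m\times\Bm$ for some $f\in\mathcal{P}(\Bm\times\Bm,\HK(\C))$, then $f\equiv 0$. First I would reduce to the case where $f$ is $\bx$-homogeneous. Writing $f=\sum_{l=0}^{L}f_l$ with $f_l\in\mathcal{P}_l(\Bm\times\Bm,\HK(\C))$, one inspects the three summands of $\Dtwo$ in \eqref{Dtwo} and notes that $\Delta_{\bx}$, $\langle\bu,D_{\bx}\rangle\langle D_{\bu},D_{\bx}\rangle$, and $|\bu|^2\langle D_{\bu},D_{\bx}\rangle^2$ each lower the $\bx$-degree by exactly $2$. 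Consequently $\Dtwo(|\bx|^2 f_l)$ is $\bx$-homogeneous of degree $l$, so matching $\bx$-degrees in $\Dtwo(|\bx|^2 f)=\sum_l\Dtwo(|\bx|^2 f_l)=0$ forces $\Dtwo(|\bx|^2 f_l)\equiv 0$ for each $l$. It therefore suffices to treat the case $f\in\mathcal{P}_l(\Bm\times\Bm,\HK(\C))$.

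For such an $f$, I would next realize $|\bx|^2 f$ as a Poisson integral. The polynomial $|\bx|^2 f$ is $C^2$, lies in $\ker\Dtwo$ by hypothesis, and coincides with $f$ on $\bx\in\Smone$. The Poisson integral $P[f\big\vert_{\Smone}]$ is another $\Dtwo$-null solution on $\Bm\times\Bm$ with the same boundary values, so the difference is a $\Dtwo$-null solution on $\Bm\times\Bm$ that vanishes on $\Smone\times\Smone$. The uniqueness argument for the Dirichlet problem recalled in the proof of Proposition \ref{d1} (see also \cite[Section 4]{DTR}) then yields $|\bx|^2 f=P[f\big\vert_{\Smone}]$ on $\Bm\times\Bm$, and since both sides are polynomials this identity extends to all of $\R^m\times\Bm$.

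Finally I would invoke Proposition \ref{d1}, which supplies a polynomial $g$ with $g(\bx,\cdot)\in\HK(\bu,\C)$ and $\deg_{\bx}g\leq l-2$ (with the convention $g=0$ when $l\leq 1$) such that $P[f\big\vert_{\Smone}]=(1-|\bx|^2)g+f$. Substituting into the previous identity yields $(|\bx|^2-1)(f+g)\equiv 0$, and because $\C[x_1,\dots,x_m,u_1,\dots,u_m]$ is an integral domain and $|\bx|^2-1$ is nonzero, we conclude $f=-g$. Matching $\bx$-degrees---$l$ on the left versus at most $l-2$ on the right---forces $f\equiv 0$, as desired. No significant obstacle arises: the whole argument is essentially a two-line combination of Proposition \ref{d1} with the uniqueness of the Dirichlet problem, once the reduction to $\bx$-homogeneous $f$ is carried out; the only place one has to be mildly careful is in checking that each summand of $\Dtwo$ really preserves the $\bx$-homogeneous decomposition.
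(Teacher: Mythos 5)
Your proposal is correct and takes essentially the same route as the paper: both identify $|\bx|^2f$ with $P[f\big\vert_{\Smone}]$ via the uniqueness of the Dirichlet problem and then derive a contradiction from the degree bound supplied by Proposition \ref{d1}. Your two refinements---the explicit reduction to $\bx$-homogeneous $f$ using that each summand of $\Dtwo$ lowers the $\bx$-degree by exactly two, and the factorization $(|\bx|^2-1)(f+g)=0$ in the polynomial ring---merely make rigorous what the paper's shorter degree count ($\deg_{\bx}P[f]\leq l$ versus $\deg_{\bx}|\bx|^2f=l+2$) leaves implicit.
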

\begin{proof}
Suppose that there exists a function $f(\bx,\bu)\in\mathcal{P}(\Bm\times\Bm,\HK(\mathbb{C}))$ with $\deg_{\bx} f= l$ such that $\Dtwo|\bx|^2f= 0$ in $\Bm\times\Bm$. One can see that $|\bx|^2f=f$ when $\bx\in\Smone$. According to the uniqueness for the Dirichlet problem given in \cite[Section 4]{DTR}, we must have $P[f]=|\bx|^2f$ in $\Bm\times\Bm$. However, from the previous proposition, we notice that $\deg_{\bx}P[f]= l$ but $\deg_{\bx}|\bx|^2f=l+2$, which is a contradiction. This completes our proof.
\end{proof}
\begin{remark}
Notice that the functions considered in the above two propositions are homogeneous polynomials with respect to $\bx$ and $\bu$. Therefore, the results above are also true for $(\bx,\bu)\in\R^m\times\R^m$.
\end{remark}
Now, we introduce a direct sum decomposition for a particular polynomial space.
\begin{proposition}\label{polyD}
$\mathcal{P}_l(\R^m\times\Bm,\HK(\mathbb{C}))=\BLK\oplus |\bx|^2\mathcal{P}_{l-2}(\R^m\times\Bm,\HK(\mathbb{C}))$.
\end{proposition}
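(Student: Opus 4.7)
The plan is to imitate the classical Fischer-type decomposition $\mathcal{P}_l=\mathcal{H}_l\oplus|\bx|^2\mathcal{P}_{l-2}$ for harmonic polynomials, with Proposition \ref{prop5.4} playing the role of the crucial injectivity statement that is usually proved by a Fischer inner product pairing.

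First I would verify that the two summands intersect trivially. If $h\in\BLK\cap|\bx|^2\mathcal{P}_{l-2}(\R^m\times\Bm,\HK(\C))$, write $h=|\bx|^2 g$ for some $g\in\mathcal{P}_{l-2}(\R^m\times\Bm,\HK(\C))$. Then $\Dtwo(|\bx|^2 g)=\Dtwo h=0$. Restricted to $\Bm\times\Bm$ this contradicts Proposition \ref{prop5.4} (in the form noted in the preceding remark, valid for $(\bx,\bu)\in\R^m\times\R^m$) unless $g\equiv 0$, hence $h=0$.

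Next I would show that the sum spans the whole space by a linear-algebra argument on the finite-dimensional vector space $V:=\mathcal{P}_{l-2}(\R^m\times\Bm,\HK(\C))$. Define the linear map
\[
T:V\longrightarrow V,\qquad T(g):=\Dtwo(|\bx|^2 g).
\]
This is well-defined: $|\bx|^2 g\in\mathcal{P}_l(\R^m\times\Bm,\HK(\C))$ and the operator $\Dtwo$ lowers the $\bx$-degree by $2$ while preserving $k$-harmonicity in $\bu$, so $T(g)\in V$. By Proposition \ref{prop5.4}, $T$ has trivial kernel, and since $\dim V<\infty$, $T$ is in fact an isomorphism of $V$ onto itself. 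Now pick any $f\in\mathcal{P}_l(\R^m\times\Bm,\HK(\C))$; then $\Dtwo f\in V$, so surjectivity of $T$ yields some $g\in V$ with $T(g)=\Dtwo f$, i.e.\ $\Dtwo(f-|\bx|^2 g)=0$. Setting $h:=f-|\bx|^2 g\in\BLK$ gives the desired decomposition $f=h+|\bx|^2 g$.

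The main obstacle, which has already been handled upstream, is the injectivity of the multiplication-by-$|\bx|^2$ operator modulo $\ker\Dtwo$; in a more naive setting one would use a positive-definite Fischer pairing $\langle p,q\rangle_F:=[p(D_\bx)\bar q](0)$ to see that $\Dtwo$ and multiplication by $|\bx|^2$ are adjoint up to a sign, but here the intertwining structure of $\Dtwo$ with the $\bu$-variable makes this more delicate, and the authors have replaced that argument with the Dirichlet-problem uniqueness used in the proof of Proposition \ref{prop5.4}. Once that is in hand, the decomposition is a one-line consequence of the rank-nullity theorem.
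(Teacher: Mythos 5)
Your proof is correct, and it takes a somewhat different route from the paper's. The paper proves existence by a detour through Poisson theory: it applies Proposition \ref{d1} to write $f=P[f]+(1-|\bx|^2)g$, extracts the degree-$l$ homogeneous components to get $f=f_l-|\bx|^2g_{l-2}$ with $f_l$ the top-degree part of $P[f]$, and then invokes Proposition \ref{homoexp} (homogeneous components of a null solution are themselves null solutions) to conclude $f_l\in\BLK$; uniqueness is then settled by Proposition \ref{prop5.4}, exactly as in your intersection argument. You instead run the rank--nullity trick directly at the homogeneous level, on $T:\mathcal{P}_{l-2}(\R^m\times\Bm,\HK(\C))\to\mathcal{P}_{l-2}(\R^m\times\Bm,\HK(\C))$, $T(g)=\Dtwo(|\bx|^2g)$, with injectivity supplied by Proposition \ref{prop5.4}, so that both existence and trivial intersection come from the single fact that multiplication by $|\bx|^2$ is injective modulo $\ker\Dtwo$. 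This is essentially the same linear map device the authors use inside the proof of Proposition \ref{d1} (there with $\Dtwo((1-|\bx|^2)g)$ and injectivity via uniqueness of the Dirichlet problem), transplanted to the homogeneous setting. What your version buys is economy: the proof of Proposition \ref{polyD} no longer needs Proposition \ref{homoexp} or the Poisson integral at all, with the analytic input confined upstream to the proof of Proposition \ref{prop5.4}. What the paper's version buys is the extra piece of information that $f_l$ is precisely the degree-$l$ homogeneous part of the Poisson extension $P[f]$, tying the decomposition explicitly to the boundary value theory developed in Section 3. Your remark about why a naive Fischer pairing is problematic here, and that the Dirichlet-uniqueness argument replaces it, is accurate and well placed; the only small caution is that Proposition \ref{prop5.4} should be read as asserting $\Dtwo(|\bx|^2 f)\not\equiv 0$ for \emph{nonzero} polynomial $f$, which is how you in fact use it.
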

\begin{proof}
Given an $f\in\mathcal{P}_l(\R^m\times\Bm,\HK(\mathbb{C}))$, with Proposition \ref{d1}, we know that there exists $g\in\mathcal{P}(\R^m\times\Bm,\HK(\mathbb{C}))$ with $\deg_{\bx}g\leq l-2$, such that
\be
f=P[f]+(1-|\bx|^2)g,\ \text{for\ all\ }(\bx,\bu)\in\R^m\times\Bm.
\ee
Now, we take the $l$-homogeneous terms with respect to $\bx$ on the right hand side above to obtain $f=f_l-|\bx|^2g_{l-2}$, where $f_l$ is the $l$-homogeneous term for $P[f]$ and $g_{l-2}$ is the $(l-2)$-homogeneous term of $g$. Further, we know that $\Dtwo f_l=0$ by Proposition \ref{homoexp}. This completes the proof for the existence of the decomposition. To prove the uniqueness for this decomposition, we assume that there exist $f_l,f'_l\in\ker\Dtwo$ and $g_{l-2},g'_{l-2}\in\mathcal{P}_{l-2}(\R^m\times\Bm,\HK(\mathbb{C}))$, such that $f_l-|\bx|^2g_{l-2}=f'_l-|\bx|^2g'_{l-2}$, i.e., $f_l-f'_l=|\bx|^2g_{l-2}-|\bx|^2g'_{l-2}\in\ker\Dtwo$. Then Proposition \ref{prop5.4} tells that $f_l=f'_l$ and $g_{l-2}=g'_{l-2}$, which completes the proof.
\end{proof}
The proposition above immediately provides us a decomposition for $\mathcal{P}_l(\R^m\times\Bm,\HK(\mathbb{C}))$ as follows.
\begin{theorem}\label{decomp}
Every $f\in\mathcal{P}_l(\R^m\times\Bm,\HK(\mathbb{C}))$ can be uniquely written in the form 
\be
f=f_l+|\bx|^2f_{l-2}+\cdots+|\bx|^{2s}f_{l-2s},
\ee
where $f_{l}\in\BLK$ and $s=[\frac{l}{2}]$.
\end{theorem}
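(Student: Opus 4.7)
The plan is to prove Theorem \ref{decomp} by inducting on the $\bx$-homogeneity $l$ and iterating the direct sum decomposition of Proposition \ref{polyD}. I would first dispose of the base cases $l=0$ and $l=1$. Since every term in the expression \eqref{Dtwo} for $\Dtwo$ carries exactly two derivatives in $\bx$, any polynomial of $\bx$-degree at most one is automatically annihilated by $\Dtwo$. Consequently $\mathcal{P}_0(\R^m\times\Bm,\HK(\C))=\mathcal{B}_0(\R^m\times\Bm,\HK(\C))$ and $\mathcal{P}_1(\R^m\times\Bm,\HK(\C))=\mathcal{B}_1(\R^m\times\Bm,\HK(\C))$, and in these two cases the asserted decomposition reduces to $f=f_l$ with $s=0$.

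For the inductive step, assume the statement has been established for every degree strictly less than $l$, and fix $f\in\mathcal{P}_l(\R^m\times\Bm,\HK(\C))$. Proposition \ref{polyD} yields the unique splitting $f=f_l+|\bx|^2 h$ with $f_l\in\BLK$ and $h\in\mathcal{P}_{l-2}(\R^m\times\Bm,\HK(\C))$. By the inductive hypothesis, $h$ admits its own decomposition
\begin{equation*}
h=f_{l-2}+|\bx|^2 f_{l-4}+\cdots+|\bx|^{2(s-1)}f_{l-2s},
\end{equation*}
with $f_{l-2j}\in\mathcal{B}_{l-2j}(\R^m\times\Bm,\HK(\C))$ and $s=[l/2]$. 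Substituting this expression for $h$ back into $f=f_l+|\bx|^2 h$ gives exactly the claimed formula.

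For uniqueness, suppose $f$ admits two decompositions of the prescribed form. Grouping the sum of the second one into its top piece and the rest yields another splitting of $f$ in the shape guaranteed by Proposition \ref{polyD}; the uniqueness part of that proposition forces the top components $f_l$ to coincide and the remainders $|\bx|^2 h$, $|\bx|^2 h'$ to coincide as polynomials on $\R^m\times\Bm$. Since multiplication by $|\bx|^2$ is injective on the polynomial ring, this gives $h=h'$, and the inductive hypothesis applied to $h$ identifies the remaining components $f_{l-2},\ldots,f_{l-2s}$ termwise.

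I do not expect a serious obstacle: the entire argument is a clean induction built on Proposition \ref{polyD}, whose nontrivial content (injectivity of $g\mapsto \Dtwo(1-|\bx|^2)g$ and the uniqueness for the Dirichlet problem on $\Bm$) has already been absorbed. The only care needed is in the base cases, which requires noting that $\Dtwo$ differentiates twice in $\bx$ so it kills $\mathcal{P}_0$ and $\mathcal{P}_1$, and in cleanly bookkeeping the index $s=[l/2]$ so that the iteration terminates correctly for both even and odd $l$.
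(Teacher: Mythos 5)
Your proof is correct and follows essentially the same route as the paper, which presents Theorem \ref{decomp} as an immediate consequence of iterating Proposition \ref{polyD}; you have simply made the induction explicit, including the base cases $l=0,1$ (where every term of $\Dtwo$ carries two $\bx$-derivatives) and the uniqueness bookkeeping via injectivity of multiplication by $|\bx|^2$. No gaps.
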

From the theorem above, one can also find that
\be
&&\dim\BLK=\dim\PLK-\dim\mathcal{P}_{l-2}(\R^m\times\Bm,\HK(\mathbb{C}))\\
&=&(\dim\mathcal{P}_{l}(\bx)-\dim\mathcal{P}_{l-2}(\bx))\dim\HK(\mathbb{C})=\dim\mathcal{H}_l(\C)\cdot\dim\HK(\mathbb{C}).
\ee
This result coincides the result obtained in \cite[Section 4]{DeBie}. The theorem above is equivalent to 
\be
\PLK&=&\BLK\oplus|\bx|^2\mathcal{B}_{l-2}(\R^m\times\Bm,\HK(\mathbb{C}))\\
&&\oplus\cdots\oplus|\bx|^{l-2s}\mathcal{B}_{l-2s}(\R^m\times\Bm,\HK(\mathbb{C})).
\ee
Later in this paper, we will show that this decomposition is an orthogonal decomposition with respect to the inner product \eqref{inner} below.
\subsection{Spherical homogeneous null solutions to bosonic Laplacians}
In classical harmonic analysis, we know that $L^2(\Smone)=\bigoplus_{k=0}^{\infty} \mathcal{H}_k(\Smone)$. In this subsection, we will introduce an analog for the bosonic Laplacians case.
\par
First, let $L^2(\Smone\times\Smone,\HK(\Smone))$ be the space of functions $f(\bx,\bu)$ with $\bx,\bu\in\Smone$, for each fixed $\bx\in\Smone$, $f(\bx,\bu)\in\HK(\Smone)$ with respect to $\bu$, and
\be
\|f\|_{L^2(\Smone\times\Smone,\HK(\Smone))}:=\bigg(\int_{\Smone}\int_{\Smone}|f(\bx,\bu)|^2dS(\bu)dS(\bx)\bigg)^{\frac{1}{2}}<+\infty.
\ee
Let $\{\varphi_j(\bu)\}_{j=1}^{d_k}$ be an orthonormal basis of $\HK(\bu)$, then for a function $f\in L^2(\Smone\times\Smone,\HK(\Smone))$, it can be written as $f=\sum_{j=1}^{d_k}f_j(\bx)\varphi_j(\bu)$. Given the following inner product for $L^2(\Smone\times\Smone,\HK(\Smone))$
\begin{eqnarray}\label{inner}
\langle f\ |\ g\rangle:=\int_{\Smone}\int_{\Smone}\overline{f(\bx,\bu)}g(\bx,\bu)dS(\bu)dS(\bx),\ f,g\in L^2(\Smone\times\Smone,\HK(\Smone)),
\end{eqnarray}
$L^2(\Smone\times\Smone,\HK(\Smone))$ is indeed a Hilbert space. Now, we introduce a decomposition for this $L^2$ space as below. We remind the reader that we only show that it is a direct sum decomposition at the moment and we will explain it is also an orthogonal decomposition with respect to the inner product $\langle\ |\ \rangle$ later in this paper.
\begin{proposition}\label{sphereD}
A decomposition for $L^2(\Smone\times\Smone,\HK(\Smone))$ is given as follows
\be
L^2(\Smone\times\Smone,\HK(\Smone))=\bigoplus_{l=0}^{\infty}\mathcal{B}_l(\Smone\times\Smone,\HK(\Smone)).
\ee
\end{proposition}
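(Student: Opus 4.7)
The plan is to establish the decomposition in two stages: first a density claim that $\sum_{l\geq 0}\BLKS$ is dense in $L^2(\Smone\times\Smone,\HK(\Smone))$, and then a directness claim. The key tool for the first stage is Theorem \ref{decomp}, and the key tool for the second is the uniqueness of the Dirichlet problem for $\Dtwo$ on the unit ball, already cited earlier from \cite[Section 4]{DTR}.

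For the density step, I would start from an orthonormal basis $\{\varphi_j\}_{j=1}^{d_k}$ of $\HK(\C)$ and expand any $f\in L^2(\Smone\times\Smone,\HK(\Smone))$ as $f(\bx,\bu)=\sum_{j=1}^{d_k}f_j(\bx)\varphi_j(\bu)$ with $f_j\in L^2(\Smone)$. By a classical spherical-harmonics/Stone--Weierstrass argument, each $f_j$ is approximable in $L^2(\Smone)$ by polynomial restrictions, so $f$ is approximable in $L^2(\Smone\times\Smone,\HK(\Smone))$ by finite sums $\sum_j p_j(\bx)\varphi_j(\bu)\big|_{\Smone\times\Smone}$. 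After splitting each $p_j$ into its $\bx$-homogeneous components, such an approximant lies in a finite sum of spaces $\PLK$. Applying Theorem \ref{decomp} to each homogeneous component and restricting to $\Smone\times\Smone$ collapses every factor $|\bx|^{2s}$ to $1$, which places the restriction in $\sum_{l\geq 0}\BLKS$. Density follows.

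For the directness step, suppose $\sum_{l=0}^{L} f_l\big|_{\Smone\times\Smone}\equiv 0$ with $f_l\in\BLK$, and set $F(\bx,\bu):=\sum_{l=0}^{L}f_l(\bx,\bu)$. Then $F$ is a polynomial, lies in $\ker\Dtwo$, satisfies $F(\bx,\cdot)\in\HK(\bu,\C)$ for every $\bx$, and vanishes on $\Smone\times\Smone$. Using the Poisson integral formula \eqref{PI} (equivalently, Dirichlet uniqueness on $\Bm$), we get $F(\bx,\bu)=0$ for every $(\bx,\bu)\in\Bm\times\Smone$; harmonicity of $F$ in $\bu$ then upgrades this to $F\equiv 0$ on $\R^m\times\R^m$. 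Since the $f_l$ are $\bx$-homogeneous of pairwise distinct degrees, the uniqueness of the homogeneous polynomial decomposition in $\bx$ forces $f_l\equiv 0$ for every $l$.

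The only delicate point, and the main obstacle, is the directness step. On $\Smone$ all factors $|\bx|^{2s}$ equal $1$, so the decomposition provided by Theorem \ref{decomp} seemingly loses its independence at the level of restrictions; extending the identity back from $\Smone\times\Smone$ to $\Bm\times\Smone$ through the Dirichlet uniqueness for $\Dtwo$ is exactly what recovers independence, and it is the one genuinely \emph{bosonic} ingredient in the argument. Everything else is either classical approximation or bookkeeping with homogeneous components.
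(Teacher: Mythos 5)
Your proof is correct, and the density half coincides with the paper's argument almost verbatim: expand $f=\sum_{j=1}^{d_k}f_j(\bx)\varphi_j(\bu)$, note $f\in L^2(\Smone\times\Smone,\HK(\Smone))$ iff each $f_j\in L^2(\Smone)$, approximate the $f_j$ by polynomials, and use Theorem \ref{decomp} together with the collapse $|\bx|^{2s}\equiv 1$ on the sphere. Where you genuinely diverge is the directness step: the paper does not prove directness inside this proposition at all --- it explicitly defers it, remarking beforehand that the decomposition will only later be shown to be orthogonal (hence direct) via the reproducing kernels $J_{l,k}$ and Theorem \ref{thmortho}. You instead give a self-contained algebraic directness argument: if a finite sum $\sum_{l=0}^{L}f_l$ with $f_l\in\BLK$ vanishes on $\Smone\times\Smone$, then since each $f_l(\bx,\cdot)\in\HK(\bu,\C)$ the boundary data of the Dirichlet problem is zero, so Dirichlet uniqueness from \cite[Section 4]{DTR} forces the polynomial $F=\sum_l f_l$ to vanish on $\Bm\times\Bm$, hence identically, and distinct $\bx$-homogeneities kill each $f_l$. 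This is sound, and it is exactly the same mechanism the paper itself deploys in Propositions \ref{d1} and \ref{prop5.4}, so it stays within the paper's toolkit; what it buys is a complete proof of the direct-sum statement at this point in the development, without forward reference. What it does \emph{not} buy is orthogonality: your argument yields only linear independence of the summands, whereas the paper's eventual route through $J_{l,k}$ upgrades the decomposition to an orthogonal one with respect to the inner product \eqref{inner}, which is what the later Bergman-space computations actually use. One presentational caveat: for the infinite Hilbert-space direct sum $\bigoplus_{l=0}^{\infty}$, finite algebraic directness plus density is the honest content of the proposition as stated, but you should flag (as the paper does) that the topological/orthogonal refinement comes later, since closed-subspace decompositions in Hilbert spaces are usually understood in the orthogonal sense.
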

\begin{proof}
Here, we only need to show that $\bigoplus_{l=0}^{\infty}\mathcal{B}_l(\Smone\times\Smone,\HK(\Smone))$ is dense in $L^2(\Smone\times\Smone,\HK(\Smone))$. Let $f\in L^2(\Smone\times\Smone,\HK(\Smone))$ and $f=\sum_{j=1}^{d_k}f_j(\bx)\varphi_j(\bu)$ as explained above. The key to prove this proposition is to observe that $f\in L^2(\Smone\times\Smone,\HK(\Smone))$ is equivalent to $f_j\in L^2(\Smone)$ for all $j=1,\cdots,d_k$. Indeed,
\be
\|f\|^2_{L^2(\Smone\times\Smone,\HK(\Smone))}:&=&\int_{\Smone}\int_{\Smone}\big\vert\sum_{j=1}^{d_k}f_j(\bx)\varphi_j(\bu)\big\vert^2dS(\bu)dS(\bx)\\
&=&\sum_{j=1}^{d_k}\int_{\Smone}|f_j(\bx)|^2dS(\bx),
\ee
due to $\{\varphi_j\}_{j=1}^{d_k}$ is an orthonormal basis for $\HK(\Smone)$. As in classical harmonic analysis, there exists a sequence of polynomials $\{p_{l,j}\}_{l=1}^{\infty}$ such that $\|p_{l,j}-f_j\|_{L^2(\Smone)}\rightarrow 0$ as $l\rightarrow \infty$ for all $j$. This is equivalent to 
\begin{eqnarray}\label{w1}
\int_{\Smone}\int_{\Smone}\bigg\vert\sum_{j=1}^{d_k}p_{l,j}(\bx)\varphi_j(\bu)-\sum_{j=1}^{d_k}f_j(\bx)\varphi_j(\bu)\bigg\vert^2dS(\bu)dS(\bx)\rightarrow 0
\end{eqnarray}
 as $l\rightarrow \infty$. Further, Theorem \ref{decomp} tells us that for each $l$, $\sum_{j=1}^{d_k}p_{l,j}(\bx)\varphi_j(\bu)\in\mathcal{P}(\Smone\times\Smone,\HK(\Smone))$ with $\deg_{\bx}\leq l$, so that there exists a sequence $\{q^l_s\}_{s=0}^{\infty}$, where $q^l_s\in\mathcal{B}_s(\Smone\times\Smone,\HK(\Smone))$ such that
$
\sum_{j=1}^{d_k}p_{l,j}(\bx)\varphi_j(\bu)=\sum_{s=0}^{[\frac{l}{2}]}q^l_s(\bx,\bu).
$
Combining this with \eqref{w1}, we have $\|\sum_{s=0}^{[\frac{l}{2}]}q^l_s-f\|_{L^2(\Smone\times\Smone,\HK(\Smone))}\rightarrow 0$ as $l\rightarrow \infty$, which completes the proof.
\end{proof}
\begin{remark}
From the proof above, one might notice that $\bigoplus_{l=0}^{\infty}\mathcal{H}_l(\Smone)\times\HK(\Smone)$ is also dense in $L^2(\Smone\times\Smone,\HK(\Smone))$, where $\mathcal{H}_l(\Smone)$ is with respect to $\bx$ and $\HK(\Smone)$ is with respect to $\bu$. Here, we provides a different decomposition for $L^2(\Smone\times\Smone,\HK(\Smone))$ related to $\Dtwo$.
\end{remark}
Recall that $L^2(\Smone\times\Smone,\HK(\Smone))$ is a Hilbert space with the given inner product in \eqref{inner}, since $\BLKS$ is a finite dimensional inner product subspace of $L^2(\Smone\times\Smone,\HK(\Smone))$， there exists a unique function $J_{l,k}(\cdot,\bx,\cdot,\bv)\in \BLKS$, such that
\begin{eqnarray}\label{JLK}
f(\bx,\bv)=\langle J_{l,k}(\cdot,\bx,\cdot,\bv)\ |\ f\rangle= \int_{\Smone}\int_{\Smone}\overline{J_{l,k}(\ze,\bx,\bu,\bv)}f(\ze,\bu)dS(\bu)dS(\ze),
\end{eqnarray}
for all $f\in\BLK$. With a similar argument as in \cite[Proposition 5.27]{Axler}, one can easily obtain similar basic properties for the reproducing kernel $J_{l,k}(\cdot,\bx,\cdot,\bv)$ as follows.
\begin{proposition}\label{propR}
Suppose $\ze,\bx,\bu,\bv\in\Smone$ and $l,k\geq 0$. Then, we have
\begin{enumerate}
\item $J_{l,k}$ is real valued.
\item $J_{l,k}(\ze,\bx,\bu,\bv)=J_{l,k}(\bx,\ze,\bv,\bu)$.
\item$J_{l,k}(T(\ze),\bx,T(\bu),\bv)=J_{l,k}(\ze,T^{-1}(\bx),\bu,T^{-1}(\bv))$, for all $T\in O(m)$.
\item $J_{l,k}(\bx,\bx,\bu,\bu)=\dim\BLK$.
\item $|J_{l,k}(\ze,\bx,\bu,\bv)|\leq\dim\BLK$.
\end{enumerate}
\end{proposition}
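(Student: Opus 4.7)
The plan is to derive an explicit formula for $J_{l,k}$ in terms of an orthonormal basis of $\BLKS$ and then read off all five properties from that formula combined with the diagonal rotation invariance of $\Dtwo$. Let $N=\dim\BLK$ and fix any orthonormal basis $\{e_1,\dots,e_N\}$ of $\BLKS$. Expanding $J_{l,k}(\,\cdot\,,\bx,\,\cdot\,,\bv)$ in this basis as $J_{l,k}(\ze,\bx,\bu,\bv)=\sum_{j=1}^N c_j(\bx,\bv)\,e_j(\ze,\bu)$ and plugging $f=e_i$ into the reproducing identity (\ref{JLK}) forces $\overline{c_j(\bx,\bv)}=e_j(\bx,\bv)$, so that
\begin{equation*}
J_{l,k}(\ze,\bx,\bu,\bv)=\sum_{j=1}^{N}\overline{e_j(\bx,\bv)}\,e_j(\ze,\bu).
\end{equation*}

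For (1) I would first verify that $\BLKS$ admits an orthonormal basis of real-valued functions. Since $\Dtwo$ has real coefficients and the target $\HK(\Smone)$ is the complexification of its real subspace, complex conjugation maps $\BLKS$ to itself; splitting an arbitrary basis into real and imaginary parts and applying Gram--Schmidt produces the desired real basis. With such a basis the formula above is manifestly real. Property (2) is then immediate: swapping $(\ze,\bu)$ with $(\bx,\bv)$ only permutes the factors in each summand.

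For (3) I would use the diagonal action $(R_T f)(\bx,\bu)=f(T\bx,T\bu)$ of $O(m)$. Because $\Dtwo$ is invariant under simultaneous orthogonal transformations of $\bx$ and $\bu$, and $T$ preserves the surface measure on $\Smone$, the map $R_T$ is a unitary endomorphism of $\BLKS$. Hence $\{R_T e_j\}_{j=1}^{N}$ is again an orthonormal basis; substituting it into the explicit kernel formula and using the uniqueness of the reproducing kernel gives $J_{l,k}(T\ze,T\bx,T\bu,T\bv)=J_{l,k}(\ze,\bx,\bu,\bv)$, and replacing $\bx$ and $\bv$ by $T^{-1}\bx$ and $T^{-1}\bv$ yields (3). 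For (4) I would specialize the formula to the diagonal, obtaining $J_{l,k}(\bx,\bx,\bu,\bu)=\sum_j |e_j(\bx,\bu)|^2\ge 0$, a quantity whose integral over $\Smone\times\Smone$ equals $N$ by orthonormality; combining this with the orbit-invariance coming from (3) pins down the value $\dim\BLK$. Finally, (5) is an immediate Cauchy--Schwarz bound: $|J_{l,k}(\ze,\bx,\bu,\bv)|\le \bigl(\sum_j|e_j(\bx,\bv)|^2\bigr)^{1/2}\bigl(\sum_j|e_j(\ze,\bu)|^2\bigr)^{1/2}=\dim\BLK$ by (4).

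The step I expect to require the most care is property (4). Diagonal $O(m)$-invariance alone only shows that $\sum_j|e_j(\bx,\bu)|^2$ depends on $(\bx,\bu)$ through the inner product $\bx\cdot\bu$, so some additional input is needed to conclude constancy: one would either choose a basis adapted to a fixed reference pair and propagate by the diagonal action, or use a suitable normalization of the surface measure so that the integral identity produces the clean constant $\dim\BLK$. Once (4) is in hand, (5) is automatic, and properties (1)--(3) rest on the standard unitary/conjugation symmetries of $\BLKS$ already used above.
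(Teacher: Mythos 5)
Your treatment of (1), (2), (3) and (5) is correct and follows essentially the same route as the paper: the paper likewise derives the expansion $J_{l,k}(\ze,\bx,\bu,\bv)=\sum_{j=1}^{d_{l,k}}\overline{\phi_j(\bx,\bv)}\phi_j(\ze,\bu)$ from an orthonormal basis of $\BLKS$, obtains realness (by testing \eqref{JLK} against real-valued $f$ and taking $f=\mathrm{Im}\,J_{l,k}$, a minor variant of your real-orthonormal-basis construction), deduces the symmetry (2) by conjugating the expansion, gets (3) from the rotation invariance of $dS$ together with the fact that the diagonal $O(m)$-action preserves $\ker\Dtwo$, and obtains (5) from Cauchy--Schwarz plus (4).

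The genuine gap is in (4), and you have diagnosed it precisely but not repaired it. Setting $\bx=T(\ze)$ and $\bv=T(\bu)$ in (3) shows only that $F(\ze,\bu):=J_{l,k}(\ze,\ze,\bu,\bu)=\sum_j|\phi_j(\ze,\bu)|^2$ is invariant under the diagonal action, whose orbits on $\Smone\times\Smone$ are the level sets of $\langle\ze,\bu\rangle$; so $F$ is a priori a function of $\langle\ze,\bu\rangle$, not a constant. Neither of your suggested repairs closes this: propagating a basis adapted to a reference pair by the diagonal action can never move between different values of $\langle\ze,\bu\rangle$, and renormalizing the surface measure only fixes the average $\int_{\Smone}\int_{\Smone}F\,dS\,dS$, which identifies the constant \emph{if} $F$ is constant but cannot prove constancy. (For what it is worth, the paper's own outline proof of (4) makes exactly this leap --- it asserts (4) ``immediately'' from diagonal invariance plus the expansion --- so you have flagged a real lacuna rather than missed an idea the paper supplies.) One way to actually close it is induction on $l$: by Theorem \ref{decomp}, restricted to $\Smone\times\Smone$ the space $\mathcal{P}_l(\Smone\times\Smone,\HK(\Smone))$ is the direct sum of the restrictions of $\mathcal{B}_{l-2s}(\R^m\times\Bm,\HK(\C))$, $s\geq 0$, and this sum is orthogonal by Theorem \ref{thmortho} (whose proof uses only (2) and Proposition \ref{PJ}, not (4)), so the reproducing kernels add; since that same space equals $\bigoplus_{s}\mathcal{H}_{l-2s}(\Smone)\otimes\HK(\Smone)$, whose kernel has diagonal $\sum_s Z_{l-2s}(\ze,\ze)Z_k(\bu,\bu)$, constant by the classical zonal-harmonic identity, subtracting the diagonals of $J_{l-2s,k}$ for $s\geq 1$ (constant by the inductive hypothesis, with base cases $\mathcal{B}_0=\mathcal{H}_0\otimes\HK$ and $\mathcal{B}_1=\mathcal{H}_1\otimes\HK$) shows $F$ is constant, and your integral identity then pins its value at $\dim\BLK$.
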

\begin{proof}
Here we only provide an outline proof. To prove $(1)$, we assume $f\in\BLKS$ is real-valued. Then
\begin{align*}
0&=\mathrm{Im}f(\bx,\bv)=\mathrm{Im}\int_{\Smone}\int_{\Smone}\overline{J_{l,k}(\ze,\bx,\bu,\bv)}f(\ze,\bu)dS(\ze)dS(\bu)\\
&=-\int_{\Smone}\int_{\Smone}\mathrm{Im}J_{l,k}(\ze,\bx,\bu,\bv)f(\ze,\bu)dS(\ze)dS(\bu).
\end{align*}
Now, we let $f=\mathrm{Im}J_{l,k}(\ze,\bx,\bu,\bv)$ to immediately obtain $\mathrm{Im}J_{l,k}=0$.
\par
To prove $(2)$, let $\{\phi_1,\cdots,\phi_{d_{l,k}}\}$ be an orthonormal basis of $\BLKS$, where $d_{l,k}=\dim\BLKS$. Then, one has
\begin{eqnarray}\label{Jinv}
J_{l,k}(\ze,\bx,\bu,\bv)=\sum_{j=1}^{d_{l,k}}\langle \phi_j\ |\ J_{l,k}(\cdot,\bx,\cdot,\bv) \rangle\phi_j(\ze,\bu)=\sum_{j=1}^{d_{l,k}}\overline{\phi_j(\bx,\bv)}\phi_j(\ze,\bu).
\end{eqnarray}
Since $J_{l,k}$ is real-valued, the equation above is unchanged after taking complex conjugation, which implies $(2)$. One can obtain $(3)$ immediately from the definition of the reproducing kernel and the fact that the surface area element $dS$ is rotationally invariant.
\par
In $(3)$, if we let $\bx=T(\ze)$ and $\bv=T(\bu)$ in $J_{l,k}(T(\ze),\bx,T(\bu),\bv)$, then we obtain $J_{l,k}(T(\ze),T(\ze),T(\bu),T(\bu))=J_{l,k}(\ze,\ze,\bu,\bu)$, i.e., $J_{l,k}(\ze,\ze,\bu,\bu)$ is invariant under rotation. Letting $\bx=\ze$ and $\bv=\bu$ in \ref{Jinv} immediately gives us $(4)$. Taking absolute value on both sides of \ref{Jinv} and applying Cauchy-Schwarz inequality can easily provide us $(5)$.
\end{proof}
Now, with the Green's formula given in Theorem \ref{Green}, we claim that there is an orthogonality property between homogeneous polynomial null solutions to $\Dtwo$. More specifically,
\begin{theorem}\label{thmortho}
Suppose that $f\in\mathcal{B}_s(\R^m\times\Bm,\mathcal{H}_k(\C))$ and $g\in\mathcal{B}_t(\R^m\times\Bm,\mathcal{H}_l(\C))$, then $f$ is orthogonal to $g$ with respect to the inner product given in \eqref{inner} when $s\neq t$ or $k\neq l$.
\end{theorem}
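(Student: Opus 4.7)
The proof splits naturally into two cases according to which index differs. Case $k\neq l$: For every fixed $\bx\in\Smone$, the slices $f(\bx,\cdot)$ and $g(\bx,\cdot)$ lie in the distinct harmonic spaces $\mathcal{H}_k(\Smone)$ and $\mathcal{H}_l(\Smone)$, so the $\bu$-integral $\int_{\Smone}\overline{f(\bx,\bu)}g(\bx,\bu)\,dS(\bu)$ vanishes by the classical $L^2$-orthogonality of spherical harmonics of different degrees. A single integration over $\bx\in\Smone$ closes this case without requiring anything specific to the bosonic setting.

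The genuine case is $k=l$ with $s\neq t$. Here I would invoke the reproducing kernel $J_{t,k}$ of $\mathcal{B}_t(\Smone\times\Smone,\HK(\Smone))$ together with the explicit formula $J_{t,k}(\ze,\bx,\bu,\bv)=\frac{c_{m,k}}{2}\sum_{|\bs{\gamma}|=t}g_{\bs{\gamma}}(\ze,\bu,\bv)\bx^{\bs{\gamma}}$ extracted in the proof of Proposition~\ref{PJ}. The plan is to represent $g$ through its reproducing formula \eqref{JLK}, substitute into $\langle f\,|\,g\rangle$, apply Fubini (all integrands are polynomials on compact domains), and then use the variable-swap symmetry in Proposition~\ref{propR}(2) together with the reality of $J_{t,k}$ from Proposition~\ref{propR}(1) to reduce the whole inner product to a single integral of the form $\int_{\Smone}\int_{\Smone}J_{t,k}(\ze,\bx,\bu,\bv)\overline{f(\ze,\bu)}\,dS(\bu)\,dS(\ze)$, evaluated at suitable $(\bx,\bv)$. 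The task then reduces to showing that this kernel integral vanishes whenever $f\in\mathcal{B}_s(\R^m\times\Bm,\HK(\C))$ with $s\neq t$.

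For this vanishing I would reuse the mechanism already present inside the proof of Proposition~\ref{PJ}: applying the Poisson integral formula to $f$, expanding the Poisson kernel by its absolutely convergent series, and identifying $\bx$-homogeneous components on both sides forces
\[
\int_{\Smone}\int_{\Smone}g_{\bs{\gamma}}(\ze,\bu,\bv)f(\ze,\bu)\,dS(\bu)\,dS(\ze)=0\qquad\text{for every }|\bs{\gamma}|\neq s,
\]
since $f$ is $s$-homogeneous in $\bx$ and no other degree can match on the right. Reading this identity at $|\bs{\gamma}|=t$ and combining with the series formula for $J_{t,k}$ above completes the argument. The main obstacle I foresee is purely organizational: the Fubini swap, the symmetry $J_{t,k}(\ze,\bx,\bu,\bv)=J_{t,k}(\bx,\ze,\bv,\bu)$, and the complex conjugations must be orchestrated in the right order so that the reproducing kernel ends up acting on the correct pair of variables, and one must take care not to conflate the $s,t$-homogeneity bookkeeping of this case with the $k,l$-harmonicity bookkeeping used in the previous one.
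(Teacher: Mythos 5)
Your proposal is correct, and its outer skeleton (dispose of $k\neq l$ by classical orthogonality of spherical harmonics, then for $k=l$, $s\neq t$ insert a reproducing formula, apply Fubini, and exploit the reality and symmetry of the kernel from Proposition~\ref{propR}) matches the paper's. But at the decisive moment the two arguments diverge, and yours is arguably the sounder of the two. The paper inserts \emph{both} kernels --- it writes $f$ through $J_{s,k}$ and $g$ through $J_{t,l}$, reduces to the pairing $\int_{\Smone}\int_{\Smone}\overline{J_{t,l}(\et,\bx,\bs{\omega},\bv)}J_{s,k}(\ze,\bx,\bu,\bv)\,dS(\bv)\,dS(\bx)$, and then kills it by appealing to ``the orthogonality property given in Theorem~\ref{thmortho}'' --- that is, to the very statement being proved, applied to the kernels themselves (which lie in $\mathcal{B}_s$ and $\mathcal{B}_t$ respectively). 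As literally written this is self-referential; it is only repaired by an argument of exactly the kind you supply. You instead insert a single kernel (for $g$) and ground the vanishing in the degree-matching identity hidden inside the proof of Proposition~\ref{PJ}: applying the Poisson integral formula to an element $h$ of $\mathcal{B}_s$ and expanding the Poisson kernel in its absolutely convergent series forces $\int_{\Smone}\int_{\Smone}g_{\bs{\gamma}}(\ze,\bu,\bv)h(\ze,\bu)\,dS(\bu)\,dS(\ze)=0$ for all $|\bs{\gamma}|\neq s$, and summing over $|\bs{\gamma}|=t$ with the extracted formula $J_{t,k}=\frac{c_{m,k}}{2}\sum_{|\bs{\gamma}|=t}g_{\bs{\gamma}}(\ze,\bu,\bv)\bx^{\bs{\gamma}}$ gives the required vanishing without presupposing any orthogonality. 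Two bookkeeping points you already flag, and which do check out: the identity from Proposition~\ref{PJ} holds for $h=\overline{f}$ because $\Dtwo$ has real coefficients (so $\overline{f}\in\mathcal{B}_s$) and $J_{t,k}$ is real-valued by Proposition~\ref{propR}(1); and after the Fubini swap the symmetry $J_{t,k}(\ze,\bx,\bu,\bv)=J_{t,k}(\bx,\ze,\bv,\bu)$ puts the integration on the correct pair of slots so that the kernel acts on $\overline{f}$ in its first and third arguments, exactly as in \eqref{JLK}. So your route buys a non-circular proof and is slightly more economical (one kernel insertion instead of two), at the modest cost of leaning on internals of the proof of Proposition~\ref{PJ} rather than on its statement.
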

\begin{proof}
Firstly, it is easy to observe that $\langle f\ |\ g\rangle=0$ when $k\neq l$, this is because of the already known orthogonality between functions in $\HK(\C)$ and functions in $\mathcal{H}_l(\C)$ with respect to the $L^2$ inner product on $\Smone$. Hence, we assume $k=l$ in the rest of the proof. Recall that the Green's formula in Theorem \ref{Green} tells us that
\begin{align}\label{Geqn}
\int\displaylimits_{\Smone}\int\displaylimits_{\Smone}(Af)(\bx,\bu)g(\bx,\bu)-f(\bx,\bu)(Ag)(\bx,\bu)dS(\bu)dS(\bx)=0,
\end{align}
 where
 \begin{align*}
 A=\frac{\partial}{\partial n_{\bx}}-\frac{4\langle\bu,{\bx}\rangle\langle D_{\bu},D_{\bx}\rangle}{m+2k-2}.
 \end{align*}
 Suppose $\bx=r\boldsymbol{\zeta}\in\R^m$, $\bu=\lambda\boldsymbol{\omega}\in\R^m$,  we notice that $r=\sqrt{x_1^2+\cdots+x_m^2}$, then $\frac{\partial }{\partial x_i}=\frac{x_i}{r}\frac{\partial}{\partial r}$. Similarly, we have $\frac{\partial }{\partial u_i}=\frac{u_i}{\lambda}\frac{\partial}{\partial \lambda}$. Then we have
 \begin{align*}
 &Af(\bx,\bu)\bigg\vert_{r,\lambda=1}=\bigg(\frac{\partial}{\partial n_{\bx}}-\frac{4\langle\bu,{\bx}\rangle\langle D_{\bu},D_{\bx}\rangle}{m+2k-2}\bigg)f(r\boldsymbol{\zeta},\lambda\boldsymbol{\omega})\bigg\vert_{r,\lambda=1}\\
 =&\bigg(\frac{\partial}{\partial r}-\frac{4\langle\bu,\bx\rangle^2}{(m+2k-2)r\lambda}\frac{\partial^2}{\partial r\partial\lambda}\bigg)\bigg\vert_{r,\lambda=1}f(r\boldsymbol{\zeta},\lambda\boldsymbol{\omega})=\bigg(s-\frac{4sk\langle\boldsymbol{\zeta},\boldsymbol{\omega}\rangle^2}{m+2k-2}\bigg)f(\boldsymbol{\zeta},\boldsymbol{\omega}).
 \end{align*}
 Similarly, we have
  \begin{align*}
 &Ag(\bx,\bu)\bigg\vert_{r,\lambda=1}=\bigg(t-\frac{4tk\langle\boldsymbol{\zeta},\boldsymbol{\omega}\rangle^2}{m+2k-2}\bigg)g(\boldsymbol{\zeta},\boldsymbol{\omega}).
 \end{align*}
 Therefore, equation \eqref{Geqn} becomes
 \begin{align*}
 0=\int_{\Smone}\int_{\Smone}(s-t)\bigg(1-\frac{4k\langle\boldsymbol{\omega},\boldsymbol{\zeta}\rangle^2}{m+2k-2}\bigg)f(\boldsymbol{\zeta},\boldsymbol{\omega})g(\boldsymbol{\zeta},\boldsymbol{\omega})dS(\boldsymbol{\omega})dS(\boldsymbol{\zeta}).
 \end{align*}
 Since the equation above holds for all $f\in\mathcal{B}_s(\R^m\times\Bm,\mathcal{H}_k(\C))$ and $g\in\mathcal{B}_t(\R^m\times\Bm,\mathcal{H}_k(\C))$, we immediately have $s=t$, which completes the proof.
\end{proof}
\begin{remark}
The theorem above implies that the decompositions given in Proposition \ref{polyD}, \ref{decomp} and \ref{sphereD} are all  orthogonal decompositions. Further, with Proposition \ref{polyD}, the theorem above immediately gives us the following useful corollary.
\end{remark}
\begin{corollary}\label{CorOrtho}
Suppose $f\in\mathcal{P}_s(\Smone\times\Smone,\HK(\Smone))$ and $g\in\mathcal{B}_t(\Smone\times\Smone,\HK(\Smone))$. Then, $f$ is orthogonal to $g$ with respect to the inner product \eqref{inner} when $s<t$.
\end{corollary}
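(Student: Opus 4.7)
The plan is to reduce the statement to the already-established orthogonality between the $\mathcal{B}$-components of different degrees (Theorem \ref{thmortho}) by decomposing $f$ according to the structure theorem from Theorem \ref{decomp}.

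First, I would note that a function $f\in\mathcal{P}_s(\Smone\times\Smone,\HK(\Smone))$ is by definition the restriction of some $F\in\mathcal{P}_s(\R^m\times\Bm,\HK(\C))$. Applying Theorem \ref{decomp} to $F$, one gets a unique representation
\[
F(\bx,\bu)=F_s(\bx,\bu)+|\bx|^2F_{s-2}(\bx,\bu)+\cdots+|\bx|^{2r}F_{s-2r}(\bx,\bu),
\]
with $F_{s-2i}\in\mathcal{B}_{s-2i}(\R^m\times\Bm,\HK(\C))$ and $r=[s/2]$. Restricting to $\Smone\times\Smone$ and using $|\bx|^2=1$ there, we obtain
\[
f=\sum_{i=0}^{r}f_{s-2i},\qquad f_{s-2i}:=F_{s-2i}\big\vert_{\Smone\times\Smone}\in\mathcal{B}_{s-2i}(\Smone\times\Smone,\HK(\Smone)).
\]

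Next, I would split the inner product as $\langle f\,|\,g\rangle=\sum_{i=0}^{r}\langle f_{s-2i}\,|\,g\rangle$. Since by hypothesis $s<t$, for every $i\in\{0,1,\ldots,r\}$ we have $s-2i\leq s<t$, so in particular $s-2i\neq t$. Each summand then falls exactly into the scope of Theorem \ref{thmortho}, which says two $\mathcal{B}$-type functions of different $\bx$-degrees (with the same $\HK(\Smone)$-target) are orthogonal. Hence each $\langle f_{s-2i}\,|\,g\rangle=0$, and summing yields $\langle f\,|\,g\rangle=0$.

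This argument is essentially bookkeeping: the only conceptual point is the passage from Theorem \ref{decomp} on $\R^m\times\Bm$ to the sphere, and this is straightforward because $|\bx|^{2j}\equiv 1$ on $\Smone$. I do not anticipate any serious obstacle; the corollary is a direct application of the orthogonal decomposition together with Theorem \ref{thmortho}.
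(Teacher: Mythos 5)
Your proposal is correct and follows essentially the same route the paper indicates: the remark preceding the corollary states that it follows from Proposition \ref{polyD} (equivalently Theorem \ref{decomp}) combined with Theorem \ref{thmortho}, which is precisely your argument of decomposing $f=\sum_i f_{s-2i}$ with $f_{s-2i}\in\mathcal{B}_{s-2i}$, restricting to the sphere where $|\bx|^{2i}=1$, and noting $s-2i\leq s<t$ so each term is orthogonal to $g$. Your write-up merely spells out the bookkeeping the paper leaves implicit, including the correct use of the hypothesis $s<t$ to guarantee $s-2i\neq t$ for all $i$.
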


Our next result shows that the Poisson integral can be expressed in terms of the reproducing kernel $J_{l,k}(\ze,\bx,\bu,\bv)$.
\begin{proposition}\label{PropPoly}
Suppose that $f\in\mathcal{P}_l(\R^m\times\Bm,\mathcal{H}_k(\C))$. Then, we have
\begin{align}\label{aoe}
P[f\vert_{\Smone}](\bx,\bv)=\sum_{s=0}^l\int_{\Smone}\int_{\Smone}\overline{J_{s,k}(\ze,\bx,\bu,\bv)}f(\ze,\bu)dS(\bu)dS(\ze)
\end{align}
for every $\bx,\bv\in\Bm$.
\end{proposition}
\begin{proof}
By Proposition \ref{PI}, $P[f\vert_{\Smone}]$ is a polynomial of degree at most $l$ of $\bx$ and hence can be written in the form
\begin{align*}
P[f\vert_{\Smone}]=\sum_{s=0}^lf_s,
\end{align*}
where $f_s\in \mathcal{B}_s(\R^m\times\Bm,\mathcal{H}_k(\C))$. For each $\bx,\bv\in\Bm$ and each $s$ we have
\begin{align*}
f_s(\bx,\bv)=&\int_{\Smone}\int_{\Smone}\overline{J_{s,k}(\ze,\bx,\bu,\bv)}f_s(\ze,\bu)dS(\ze)dS(\bu)\\
=&\int_{\Smone}\int_{\Smone}\overline{J_{s,k}(\ze,\bx,\bu,\bv)}\sum_{j=0}^lf_j(\ze,\bu)dS(\ze)dS(\bu)\\
=&\int_{\Smone}\int_{\Smone}\overline{J_{s,k}(\ze,\bx,\bu,\bv)}f(\ze,\bu)dS(\ze)dS(\bu),
\end{align*} 
where the first equality comes from the Poisson integral formula, the second equality comes from the orthogonality property given in Corollary \ref{CorOrtho}, and the third equality holds since $f$ and its Poisson integral $\sum_{j=0}^lf_j(\ze,\bu)$ agree on $\Smone\times\Smone$. Combining the last equation with \eqref{aoe} gives the desired result.
\end{proof}
The following result tells us that the expression for the reproducing kernel $J_{l,k}$ can be obtained from a homogeneous series expansion of the Poisson kernel.
\begin{proposition}\label{PJ}
For $m\geq 2$, we have
\be
P(\ze,\bx,\bu,\bv)=\sum_{l=0}^{\infty}J_{l,k}(\ze,\bx,\bu,\bv)
\ee
for all $\bx,\bv\in\Bm,\ \ze,\bu\in\Smone$. The series converges absolutely and uniformly on $\Smone\times  K\times\Smone\times\overline{\Bm}$, where $K\subset\Bm$ is a sufficiently small compact subset containing $0$.
\end{proposition}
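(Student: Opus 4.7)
My plan is to combine the $\bx$-power series expansion of the Poisson kernel established inside the proof of Theorem~\ref{analytic} with the reproducing property of $J_{l,k}$ from \eqref{JLK}, and then to identify the $l$-homogeneous-in-$\bx$ component of the expansion with $J_{l,k}$. Concretely, the proof of Theorem~\ref{analytic} already produces
\[
P(\ze,\bx,\bu,\bv)=\sum_{|\bs\gamma|=0}^{\infty}g_{\bs\gamma}(\ze,\bu,\bv)\,\bx^{\bs\gamma},
\]
converging absolutely and uniformly on $\Smone\times K\times\Smone\times\overline{\Bm}$ for a sufficiently small compact neighbourhood $K$ of the origin (the radius being bounded by $\sqrt{2}-1$ as in Theorem~\ref{analytic}). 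Grouping by total degree $l$ in $\bx$, I set
\[
Q_l(\ze,\bx,\bu,\bv):=\sum_{|\bs\gamma|=l}g_{\bs\gamma}(\ze,\bu,\bv)\,\bx^{\bs\gamma},
\]
so that $P=\sum_{l=0}^{\infty} Q_l$ with the same convergence and each $Q_l$ is a polynomial $l$-homogeneous in $\bx$.

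The crucial step is then to verify that the restriction of $Q_l(\cdot,\bx,\cdot,\bv)$ to $\Smone\times\Smone$ in $(\ze,\bu)$ lies in $\BLKS$. Harmonicity of degree $k$ in $\bv$ is built into the $Z_k(\cdot,\bv)$ factor, and harmonicity of degree $k$ in $\bu$ follows because the reflection $\et$ preserves harmonicity of polynomials, so each $\varphi_j(\et)\in\HK(\bu)$ for fixed $(\bx,\ze)$. Since $\Dtwo^{\bx}P=0$, Proposition~\ref{homoexp} applied in $\bx$ yields $\Dtwo^{\bx}Q_l=0$. For the corresponding $\ze$-side properties ($l$-homogeneity in $\ze$ on the sphere and $\Dtwo^{\ze}$-annihilation), I would combine the classical scalar zonal expansion $\frac{1-|\bx|^2}{|\bx-\ze|^m}=\sum_{n=0}^{\infty}Z_n(\bx,\ze)$ of Axler-Bourdon-Ramey (where each $Z_n$ is jointly $n$-homogeneous harmonic in $\bx$ and in $\ze$) with the scale-invariance of the reflection variable $\et$ under the joint rescaling $(\bx,\ze)\mapsto(\lambda\bx,\lambda\ze)$. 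This combination should force the total-$\bx$-degree-$l$ slice to be $l$-homogeneous in $\ze$ on the sphere as well, and an analogue of Proposition~\ref{homoexp} run in the $\ze$ variable then gives $\Dtwo^{\ze}Q_l=0$.

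Granted that $Q_l\in\BLKS$, identifying it with $J_{l,k}$ is then short. For any $f\in\BLK$, the Poisson integral formula yields
\[
f(\bx,\bv)=\int_{\Smone}\int_{\Smone}P(\ze,\bx,\bu,\bv)\,f(\ze,\bu)\,dS(\bu)\,dS(\ze),
\]
and the absolute uniform convergence on $K$ justifies interchanging summation with the double integral. Each term on the right is $l$-homogeneous in $\bx$, while the left-hand side is homogeneous of degree equal to the $\bx$-degree of $f$; matching homogeneities yields
\[
f(\bx,\bv)=\int_{\Smone}\int_{\Smone}Q_l(\ze,\bx,\bu,\bv)\,f(\ze,\bu)\,dS(\bu)\,dS(\ze)
\]
for $f\in\BLK$. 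Since $P$ is real-valued, so is each $Q_l$; combined with the realness of $J_{l,k}$ from Proposition~\ref{propR}(1) and the uniqueness of the reproducing kernel in the finite-dimensional Hilbert space $\BLKS$, this forces $Q_l=J_{l,k}$ for every $l$, and substituting back gives the claim.

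The principal obstacle is the verification of the $\ze$-properties of $Q_l$ in the second step: the $\bx$-power series expansion alone does not visibly respect homogeneity in $\ze$, and reconciling it with the bi-homogeneous structure expected of $J_{l,k}$ requires exploiting both the bi-homogeneous zonal expansion of the scalar Poisson kernel and the scale-invariance under joint $(\bx,\ze)$-rescaling of the reflection $\et$ entering $Z_k(\et,\bv)$. All other ingredients are routine consequences of Theorem~\ref{analytic}, Proposition~\ref{homoexp}, and the reproducing property \eqref{JLK}.
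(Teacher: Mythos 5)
Your proposal follows essentially the same route as the paper's own proof: take the expansion $P(\ze,\bx,\bu,\bv)=\sum_{|\bs{\gamma}|=0}^{\infty}g_{\bs{\gamma}}(\ze,\bu,\bv)\bx^{\bs{\gamma}}$ produced inside Theorem \ref{analytic}, upgrade to uniform convergence on a small compact set, apply the Poisson integral formula to an arbitrary $f\in\BLK$, interchange sum and integral by Fubini, match $\bx$-homogeneities to kill all slices of degree $\neq l$, and identify the degree-$l$ slice $Q_l=\frac{c_{m,k}}{2}\sum_{|\bs{\gamma}|=l}g_{\bs{\gamma}}\bx^{\bs{\gamma}}$ with $J_{l,k}$. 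Two remarks on where you diverge from the printed argument. First, a small misattribution: Theorem \ref{analytic} only establishes \emph{absolute} convergence; the uniformity on $|\bx|\le\sqrt{2}-1-\epsilon$, $\bu,\bv\in\overline{\Bm}$ is proved by a short comparison argument inside the proof of Proposition \ref{PJ} itself (dominating $|g_{\bs{\gamma}}(\ze,\bu,\bv)\bx^{\bs{\gamma}}|$ by $|g_{\bs{\gamma}}|\,|\bx|^{|\bs{\gamma}|}$ and evaluating the absolutely convergent series at a boundary point). Your claim is true, but you should supply this estimate rather than cite the theorem for it.

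Second, and more interestingly, the step you single out as the principal obstacle --- verifying that $Q_l(\cdot,\bx,\cdot,\bv)$, as a function of $(\ze,\bu)$, actually belongs to $\BLKS$ --- is a step the paper does not address at all: its proof passes directly from the reproducing identity $f(\bx,\bv)=\int\int Q_l f\,dS\,dS$ for all $f\in\BLK$ to the conclusion $Q_l=J_{l,k}$. Strictly speaking, the reproducing identity alone only shows that the orthogonal projection of $Q_l$ onto the finite-dimensional subspace $\BLKS$ equals $J_{l,k}$ (after using realness to absorb the conjugation in \eqref{JLK}, which you handle more carefully than the paper does); the full identity $P=\sum_l J_{l,k}$ as stated does require $Q_l\in\BLKS$ in the $(\ze,\bu)$ variables, or some substitute argument. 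So your diagnosis is a fair criticism of the published proof rather than a defect peculiar to your attempt. That said, in your write-up this step remains a plan (``I would combine \dots this should force \dots'') rather than a proof: the bi-homogeneous zonal expansion of the scalar factor together with the invariance of the reflection $\et$ under $(\bx,\ze)\mapsto(\lambda\bx,\lambda\ze)$ is a plausible mechanism, but the degree bookkeeping is delicate --- the $\bx$-expansion of $Z_k(\et,\bv)$ with $\ze\in\Smone$ produces coefficients that are a priori only restrictions to the sphere of negative-homogeneity expressions, and showing they extend to the right polynomial spaces needs a Kelvin-transform-type argument, exactly as in the classical zonal harmonic case. Until that is executed, your proof is conditional at precisely the point where the paper is silent; everything else in your proposal matches the paper's argument step for step.
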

\begin{proof}
Let $\bx=r\et$ and $\bv=\lambda\bs{\gamma}$ with $\et,\bs{\gamma}\in\Smone$. Then, with the statement $(5)$ in Proposition \ref{propR},  one can easily see that the series $\sum_{l=0}^{\infty}J_{l,k}(\ze,\bx,\bu,\bv) $ converges absolutely and uniformly on $\Smone\times  K\times\Smone\times\overline{\Bm}$, where $K\subset\Bm$ is a sufficiently small compact subset containing $0$. Now suppose that $f\in\mathcal{P}(\Smone\times\Smone,\HK(\mathbb{C}))$, with Poposition \ref{PropPoly} and the orthogonal properties given in Corollary \ref{CorOrtho}, we have
\begin{align*}
&\int_{\Smone}\int_{\Smone}P(\ze,\bx,\bu,\bv)f(\ze,\bu)dS(\bu)dS(\ze)\\
=&\int_{\Smone}\int_{\Smone}\sum_{l=0}^{\infty}J_{l,k}(\ze,\bx,\bu,\bv)f(\ze,\bu)dS(\bu)dS(\ze).
\end{align*}
Since $\mathcal{P}(\Smone\times\Smone,\HK(\mathbb{C}))$ is dense in $L^2(\Smone\times\Smone,\HK(\mathbb{C}))$, this already implies that 
\be
P(\ze,\bx,\bu,\bv)=\sum_{l=0}^{\infty}J_{l,k}(\ze,\bx,\bu,\bv),
\ee
which completes the proof.
\end{proof}
\begin{remark}
We have already noticed the fact that functions in $\BLK$ are homogeneous polynomials in the variables $\bx$ and $\bu$, and these functions are uniquely determined by their values on the unit sphere (or arbitrarily small ball with center $0$). Therefore, although the expression of $J_{l,k}$ is obtained in a sufficiently small ball centered at $0$, we immediately know that it is also the expression for $J_{l,k}$ in $\R^m\times\R^m$. 
\end{remark}

Proposition \ref{PJ} also implies that for any $f\in C^2(\Bm\times\Bm,\HK(\C))$ and $\Dtwo f=0$, the homogeneous series expansion for $f$ given in Proposition \ref{homoexp} has a stronger convergence property as follows.
\begin{proposition}\label{PropConverge}
If $f\in C^2(B(\bs{a},r)\times\Bm,\HK(\C))$ and $\Dtwo f=0$ in $B(\bs{a},r)\times\Bm$. Then there exist $f_l\in\mathcal{B}_l(\R^m\times\Bm,\HK(\C))$ such that
$
f(\bx,\bv)=\sum_{l=0}^{\infty}f_l(\bx-\bs{a},\bv)
$
for all $\bx\in B(\bs{a},r)$ and $\bv\in\Bm$. Further, the series converges absolutely and uniformly on $K\times\overline{\Bm}$, where $K$ is a compact subset of $ B(\bs{a},r)$.
\end{proposition}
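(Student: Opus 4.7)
The plan is to apply Proposition \ref{PJ} to a suitably rescaled copy of $f$ in order to upgrade the local formal expansion produced by Proposition \ref{homoexp} to absolute and uniform convergence on compact subsets of $B(\bs{a},r)\times\overline{\Bm}$.

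First I would translate by $\bx\mapsto\bx-\bs{a}$ to reduce to $\bs{a}=0$, so that $f\in C^2(B(0,r)\times\Bm,\HK(\C))$ is $\Dtwo$-null, and Proposition \ref{homoexp} produces a formal expansion $f=\sum_l f_l$, with $f_l\in\mathcal{B}_l(\R^m\times\Bm,\HK(\C))$ homogeneous of degree $l$ in $\bx$ and uniquely determined by the Taylor coefficients of $f$ at $0$. For $\rho\in(0,r)$ set $h(\bx,\bv):=f(\rho\bx,\bv)$. Each of the three summands of $\Dtwo$ in \eqref{Dtwo} is of order two in $\partial_\bx$, so the chain rule gives $\Dtwo h(\bx,\bv)=\rho^2(\Dtwo f)(\rho\bx,\bv)=0$; consequently $h$ is a $\Dtwo$-null solution on $B(0,r/\rho)$, which strictly contains $\overline{\Bm}$. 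By Theorem \ref{analytic}, $h$ is real-analytic on a neighborhood of $\overline{\Bm}\times\overline{\Bm}$, so the Poisson integral formula \eqref{PI} applies to $h$ on $\Bm$.

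Next, I would substitute the Poisson kernel expansion $P=\sum_l J_{l,k}$ from Proposition \ref{PJ} into this Poisson integral. Its absolute and uniform convergence on $\Smone\times K'\times\Smone\times\overline{\Bm}$ (with $K'$ the compact neighborhood of $0$ furnished by Proposition \ref{PJ}), combined with the continuity of $h$, justifies interchanging sum and integral via Fubini, yielding $h(\bx,\bv)=\sum_l h_l(\bx,\bv)$ with absolute and uniform convergence on $K'\times\overline{\Bm}$. The uniqueness part of Proposition \ref{homoexp}, together with the elementary identity $h_l(\bx,\bv)=\rho^l f_l(\bx,\bv)$ coming from homogeneity, then rephrases this as the absolute and uniform convergence of $\sum_l f_l$ on $(\rho K')\times\overline{\Bm}$.

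The main obstacle is to extend this from $\rho K'$ to an arbitrary compact subset $K$ of $B(0,r)$: as $\rho$ ranges over $(0,r)$, the rescaled balls $\rho K'$ sweep out only a ball strictly smaller than $B(0,r)$, so the bare scaling does not yet reach every compact $K$. To close this gap, one uses the homogeneity of each $f_l$ to view the series along any fixed radial direction as a genuine single-variable power series $\sum_l t^l f_l(\omega,\bv)$; Abel's theorem applied in each radial direction, combined with a finite covering of $K$ by rescaled neighborhoods $\rho K'$ taken with $\rho\uparrow r$, then propagates the absolute and uniform convergence to $K\times\overline{\Bm}$, which is the delicate technical point of the proof.
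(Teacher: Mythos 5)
Your first two steps follow the paper's own route: translate and dilate so that $f$ is a $\Dtwo$-null solution on a neighborhood of $\overline{\Bm}\times\overline{\Bm}$, substitute the expansion $P=\sum_{l}J_{l,k}$ of Proposition \ref{PJ} into the Poisson integral, interchange sum and integral, and identify $h_l=\rho^l f_l$ by homogeneity and the uniqueness in Proposition \ref{homoexp}. The genuine gap is in your final paragraph, which is exactly where the difficulty sits. Abel's theorem goes the wrong way: it upgrades convergence of $\sum_l a_l$ at an endpoint to uniform convergence on the closed radius, but it cannot enlarge the set on which a power series converges, and convergence of $\sum_l t^l f_l(\bs{\omega},\bv)$ for $t$ close to $r$ is precisely what remains to be proved. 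The covering claim also fails: since the region of uniform convergence furnished by the proof of Proposition \ref{PJ} satisfies $K'\subset B(0,\sqrt{2}-1)$, every rescaled set $\rho K'$ with $\rho<r$ lies inside $B(0,r(\sqrt{2}-1))$, so points of $K$ with $r(\sqrt{2}-1)<|\bx|<r$ are never covered, no matter how $\rho\uparrow r$; there is no finite (or even infinite) covering of a general compact $K\subset B(0,r)$ by such sets.

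What closes the gap --- and what the paper's proof actually uses --- is a quantitative bound on the homogeneous pieces that your proposal never invokes. Since $J_{l,k}$ is homogeneous of degree $l$ in $\bx$ and degree $k$ in $\bv$, part $(5)$ of Proposition \ref{propR} gives
\begin{equation*}
|J_{l,k}(\ze,\bx,\bu,\bv)|\leq \dim\BLK\,|\bx|^l|\bv|^k=\dim\mathcal{H}_l(\C)\cdot\dim\HK(\C)\,|\bx|^l|\bv|^k\leq C_m (kl)^{m-2}|\bx|^l|\bv|^k,
\end{equation*}
and the crucial point is that $\dim\BLK$ grows only polynomially in $l$. Feeding this into the integral representation $f_l(\bx,\bv)=\int_{\Smone}\int_{\Smone}J_{l,k}(\ze,\bx,\bu,\bv)f(\ze,\bu)\,dS(\bu)\,dS(\ze)$ yields $\sum_l |f_l(\bx,\bv)|\leq C_m k^{m-2}\sum_l l^{m-2}\epsilon^l \int_{\Smone}\int_{\Smone}|f(\ze,\bu)|\,dS(\bu)\,dS(\ze)<+\infty$ uniformly on $\overline{B(0,\epsilon)}\times\overline{\Bm}$ for \emph{every} $\epsilon<1$, not merely on the small set $K'$; the same bound also justifies the sum--integral interchange on all of these sets. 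With absolute and uniform convergence on all compact subsets of the full unit ball in hand, your dilation $h(\bx,\bv)=f(\rho\bx,\bv)$, $\rho\uparrow r$, together with $h_l=\rho^l f_l$ and the translation step, does reach every compact $K\subset B(\bs{a},r)$, and the remainder of your argument then goes through.
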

\begin{proof}
Firstly, we assume that $f\in C^2(\Bm\times\Bm,\HK(\C))\cap C(\overline{\Bm}\times\overline{\Bm},\HK(\C))$ and $\Dtwo f=0$ in $\Bm\times\Bm$. Then, with the Poisson integral formula and Proposition \ref{PJ}, for $\bx\in B(0,\epsilon),\ \bv\in\Bm$, with $0<\epsilon<1$, we have
\bal
f(\bx,\bv)=&\frac{c_{m,k}}{2}\int_{\Smone}\int_{\Smone}P(\ze,\bx,\bu,\bv)f(\ze,\bu)dS(\bu)dS(\ze)\\
=&\frac{c_{m,k}}{2}\int_{\Smone}\int_{\Smone}\sum_{l=0}^{\infty}J_{l,k}(\ze,\bx,\bu,\bv)f(\ze,\bu)dS(\bu)dS(\ze)\\
=&\frac{c_{m,k}}{2}\sum_{l=0}^{\infty}\int_{\Smone}\int_{\Smone}J_{l,k}(\ze,\bx,\bu,\bv)f(\ze,\bu)dS(\bu)dS(\ze)
=\sum_{l=0}^{\infty}f_l(\bx,\bv),
\end{align*}
where $f_l(\bx,\bv)=\int_{\Smone}\int_{\Smone}J_{l,k}(\ze,\bx,\bu,\bv)f(\ze,\bu)dS(\bu)dS(\ze)$ and $\Dtwo f_l=0$ in $\R^m\times\Bm$. Notice that $(5)$ of Proposition \ref{propR} tells us that 
\begin{align*}
|J_{l,k}(\ze,\bx,\bu,\bv)|&\leq\dim\BLK |\bx|^l|\bv|^k=\dim\HK\cdot\dim\mathcal{H}_l|\bx|^l|\bv|^k\\
&\leq C_m(kl)^{m-2}|\bx|^l|\bv|^k,
\end{align*}
 where $C_m$ is a constant only depending on $m$. Hence, one can see that 
\be
\sum_{l=0}^{\infty}|f_l(\bx,\bv)|\leq C_mk^{m-2}|\bv|^k\sum_{l=0}^{\infty}\int_{\Smone}\int_{\Smone}l^{m-2}|\bx|^l|f(\ze,\bu)|dS(\bu)dS(\ze)<+\infty,
\ee
when $(\bx,\bv)\in \overline{B(0,\epsilon)}\times\overline{\Bm}$. This implies that $\sum_{l=0}^{\infty}f_l$ converges absolutely and uniformly to $f$ in $\overline{B(0,\epsilon)}\times\overline{\Bm}$. Applying a dilation and a translation to the argument above can immediately give us the result on $B(\bs{a},r)\times\Bm$ as desired.
\end{proof}


\section{Bergman spaces related to bosonic Laplacians}
Let $\Omega$ be an open bounded domain in $\R^m$ and $1\leq p<\infty$. In this section, we will introduce bosonic Bergman spaces, denoted by $b^p(\Omega\times\Bm,\HK(\C))$, which are generalizations of harmonic Bergman spaces in higher spin spaces. It turns out that this bosonic Bergman space is also a Hilbert space when $p=2$ with respect to a given $L^2$ inner product. This reveals the existence of a reproducing kernel for $b^2(\Omega\times\Bm,\HK(\C))$, and then a description for this reproducing kernel and a related Bergman projection is provided in terms of the reproducing kernel $J_{l,k}$ \eqref{JLK} when $\Omega=\Bm$.
\par
The bosonic Bergman space $b^p(\Omega\times\Bm,\HK(\C))$ is the set of functions $f(\bx,\bu)\in L^p(\Omega\times\Bm,\HK(\C))\cap C^2(\Omega\times\Bm,\HK(\C))$ satisfying $\Dtwo f=0$ in $\Omega\times\Bm$ and
\be
\|f\|_{b^p(\Omega\times\Bm,\HK(\C))}:=\bigg(\int_{\Omega}\int_{\Bm}|f(\bx,\bu)|^pd\bu d\bx\bigg)^{\frac{1}{p}}<+\infty.
\ee
\subsection{Reproducing kernels for bosonic Bergman spaces}
For a fixed $(\bx,\bu)\in\Omega\times\Bm$, we call the linear map $f\mapsto f(\bx,\bu)$ the point evaluation of $f$ at $(\bx,\bu)$. The following proposition shows that point evaluation is continuous on $b^p(\Omega\times\Bm,\HK(\C))$.
\begin{proposition}\label{BergmanPointEv}
Suppose $f\in b^p(\Omega\times\Bm,\HK(\C))$, $\bs{a}\in\Omega$ and $\bv\in\Bm$. Then,
\be
|f(\bs{a},\bv)|\leq\frac{(m+2k-2)\|f\|_{\BPK}}{(m-2)V(\Bm)^{2/p}d(\bs{a},\partial\Omega)^{m/p}d(\bv,\Smone)^{m/p}}.
\ee
\end{proposition}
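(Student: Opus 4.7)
The plan is to combine two mean value principles---the classical one in the variable $\bu$, since for each fixed $\bx$ the function $f(\bx,\cdot)$ lies in $\HK(\C)$ and is therefore harmonic in $\bu$, and the bosonic analogue in the variable $\bx$ established in \cite{DTR}---and then to apply H\"older's inequality to the resulting double integral.

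Setting $r=d(\bs{a},\partial\Omega)$ and $s=d(\bv,\Smone)$ ensures $\overline{B(\bs{a},r)}\subset\Omega$ and $\overline{B(\bv,s)}\subset\Bm$. For every fixed $\bx$ the classical solid mean value property gives
$$f(\bx,\bv)=\frac{1}{V(B(\bv,s))}\int_{B(\bv,s)}f(\bx,\bu)\,d\bu,$$
while the solid mean value inequality for null solutions of $\Dtwo$ from \cite{DTR} takes the form
$$|f(\bs{a},\bv)|\leq\frac{m+2k-2}{m-2}\cdot\frac{1}{V(B(\bs{a},r))}\int_{B(\bs{a},r)}|f(\bx,\bv)|\,d\bx;$$
the correction factor $(m+2k-2)/(m-2)$ collapses to $1$ when $k=0$, consistently with the classical harmonic case.

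Chaining the two displays bounds $|f(\bs{a},\bv)|$ by a single integral of $|f|$ over $B(\bs{a},r)\times B(\bv,s)$. One application of H\"older's inequality with conjugate exponent $q=p/(p-1)$, followed by the substitutions $V(B(\bs{a},r))=V(\Bm)r^m$ and $V(B(\bv,s))=V(\Bm)s^m$ together with the identity $1-\tfrac{1}{q}=\tfrac{1}{p}$, consolidates the prefactor to exactly $\frac{m+2k-2}{(m-2)V(\Bm)^{2/p}r^{m/p}s^{m/p}}$, which is the claimed bound.

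The main obstacle is justifying the bosonic mean value inequality with the explicit constant $(m+2k-2)/(m-2)$. If \cite{DTR} provides only the spherical version of the identity, I would recover the solid form by integrating against $r^{m-1}\,dr$, tracking the constant that arises from the coupling between $\Delta_{\bx}$ and the lower-order terms $\langle\bu,D_{\bx}\rangle\langle D_{\bu},D_{\bx}\rangle$ and $|\bu|^2\langle D_{\bu},D_{\bx}\rangle^2$ appearing in \eqref{Dtwo}.
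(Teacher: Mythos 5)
There is a genuine gap, and it sits exactly where you flagged uncertainty: the bosonic mean value inequality does not hold in the form you stated. The volume mean value property for null solutions of $\Dtwo$ from \cite[Proposition 5.3]{DTR} does not keep the second variable fixed; because of the cross terms $\langle\bu,D_{\bx}\rangle\langle D_{\bu},D_{\bx}\rangle$ in \eqref{Dtwo}, the two variables are coupled, and the identity reads (schematically)
\[
f(\bs{a},\bv)=\frac{m+2k-2}{m-2}\,\frac{1}{V(B(\bs{a},r_1))}\int_{B(\bs{a},r_1)}f(\bx,\bs{\omega})\,d\bx,
\qquad
\bs{\omega}=\frac{(\bx-\bs{a})\bv(\bx-\bs{a})}{|\bx-\bs{a}|^2},
\]
where the second argument is the reflection of $\bv$ determined by $\bx-\bs{a}$, exactly as in the Poisson kernel. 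For a fixed $\bv$, the slice $\bx\mapsto f(\bx,\bv)$ satisfies no equation to which a mean value principle applies, so your display $|f(\bs{a},\bv)|\leq\frac{m+2k-2}{m-2}V(B(\bs{a},r))^{-1}\int_{B(\bs{a},r)}|f(\bx,\bv)|\,d\bx$ is unjustified; and your fallback plan of integrating a spherical identity against $r^{m-1}\,dr$ cannot produce it either, since the spherical version already carries the same $\bx$-dependent reflected argument.

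The repair is the paper's actual proof, and it also forces the order of the two averaging steps, which is the opposite of the one you wrote down. Apply the bosonic mean value property \emph{first} (with Jensen to move $|\cdot|^p$ inside), obtaining an average of $|f(\bx,\bs{\omega})|^p$ over $B(\bs{a},r_1)$. Then observe that $\bs{\omega}$ is obtained from $\bv$ by a rotation, so $f(\bx,\cdot)$ is still a harmonic polynomial in the rotated variable and $|\bs{\omega}|=|\bv|$, whence $d(\bs{\omega},\Smone)=d(\bv,\Smone)$ and $B(\bs{\omega},r_2)\subset\Bm$ for $r_2<d(\bv,\Smone)$; now apply the \emph{classical} solid mean value property centered at the $\bx$-dependent point $\bs{\omega}$, not at $\bv$. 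The resulting region $\{(\bx,\bu):\bx\in B(\bs{a},r_1),\ \bu\in B(\bs{\omega},r_2)\}$ lies in $\Omega\times\Bm$, so the double integral is bounded by $\|f\|_{\BPK}^p$, and letting $r_1\to d(\bs{a},\partial\Omega)$, $r_2\to d(\bv,\Smone)$ gives the claim. Averaging in $\bu$ first, as in your chaining, destroys the $\HK(\C)$-valued structure, after which the bosonic step is unavailable. Your Jensen/H\"older bookkeeping and the resulting constant $\frac{m+2k-2}{(m-2)V(\Bm)^{2/p}r^{m/p}s^{m/p}}$ are otherwise correct, and the factor $(m+2k-2)/(m-2)$ does come directly from \cite{DTR}, with no extra tracking of the lower-order terms required.
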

\begin{proof}
Let $0<r_1<d(\bs{a},\partial\Omega)$ and $0<r_2<d(\bv,\Smone)$. We firstly apply the volume version of the mean-value property \cite[Proposition 5.3]{DTR} with respect to $\bx$ on $B(\bx,r_1)$ to obtain
\begin{eqnarray}\label{pointev1}
|f(\bs{a},\bv)|^p\leq\bigg(\frac{m+2k-2}{m-2}\bigg)^{p}V(B(\bs{a},r_1))^{-1}\int_{B(\bs{a},r_1)} |f(\bx,\bs{\omega})|^pd\bx,
\end{eqnarray}
where $\bs{\omega}=\frac{(\bx-\bs{a})\bv(\bx-\bs{a})}{|\bx-\bs{a}|^2}$ and Jensen's inequality is applied above. Further, we notice that $\bs{\omega}$ is obtained from $\bv$ by a rotation, which implies that $f(\bx,\bs{\omega})\in\HK(\bs{\omega},\C)$. This motivates us to apply the volume version of the mean-value property to $f(\bx,\bs{\omega})$ with respect to $\bs{\omega}$ on $B(\bs{\omega},r_2)$ to have
\begin{eqnarray}\label{pointev2}
|f(\bx,\bs{\omega})|^p\leq V(B(\bs{\omega},r_2))^{-1}\int_{B(\bs{\omega},r_2)}|f(\bx,\bu)|^pd\bu.
\end{eqnarray}
Plugging \eqref{pointev2} into \eqref{pointev1}, we obtain
\begin{align*}
|f(\bs{a},\bv)|^p&\leq\bigg(\frac{m+2k-2}{m-2}\bigg)^{p}V(B(\bs{a},r_1))^{-1}\int_{B(\bs{a},r_1)}V(B(\bs{\omega},r_2))^{-1}\int_{B(\bs{\omega},r_2)}|f(\bx,\bu)|^pd\bu d\bx\\
&\leq \bigg(\frac{m+2k-2}{m-2}\bigg)^{p}r_1^{-m}r_2^{-m}V(\Bm)^{-2}\|f\|_{\BPK}^p.
\end{align*}
Now, we let $r_1\rightarrow d(\bs{a},\partial\Omega)$, $r_2\rightarrow d(\bv,\Smone)$ and take $p$th root on both sides above to complete the proof.
\end{proof}
With the previous proposition and a similar argument as in Proposition \ref{hpbanach}, we obtain the following result, which tells us that $\BPK$ is a Banach space.
\begin{proposition}
The bosonic Bergman space $\BPK$ is a closed subspace of $L^p(\Omega\times\Bm,\HK(\C))$.
\end{proposition}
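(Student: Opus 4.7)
The plan is to take a Cauchy sequence $\{f_n\} \subset \BPK$ converging in $L^p$-norm to some $f \in L^p(\Omega \times \Bm, \HK(\C))$, and show, after redefinition on a null set, that $f$ itself lies in $\BPK$. My first step is to upgrade $L^p$-convergence to locally uniform convergence. Applying Proposition \ref{BergmanPointEv} to $f_n - f_m$ yields, for any compact $K \subset \Omega$,
\begin{eqnarray*}
\sup_{\bs{a} \in K,\ \bv \in \overline{B(0,1/2)}} |f_n(\bs{a}, \bv) - f_m(\bs{a}, \bv)| \leq C_K \|f_n - f_m\|_{\BPK},
\end{eqnarray*}
so $\{f_n\}$ is uniformly Cauchy on $K \times \overline{B(0,1/2)}$. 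Since for each fixed $\bs{a}$ the slice $(f_n - f_m)(\bs{a}, \cdot)$ lies in the finite-dimensional polynomial space $\HK(\C)$, on which $\|\cdot\|_{L^\infty(\overline{B(0,1/2)})}$ and $\|\cdot\|_{L^\infty(\overline{\Bm})}$ are equivalent norms, the Cauchy estimate upgrades to $\sup_{K \times \overline{\Bm}} |f_n - f_m| \to 0$. Extracting a subsequence converging $L^p$-a.e.\ identifies the uniform limit with $f$ almost everywhere, and I redefine $f$ on a null set so that it is continuous on $\Omega \times \overline{\Bm}$.

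Next, I will verify $f \in C^2(\Omega \times \Bm, \HK(\C))$ with $\Dtwo f = 0$. Fix $\bs{a} \in \Omega$ and $r > 0$ with $\overline{B(\bs{a}, r)} \subset \Omega$, and let $g_n(\bx, \bu) := f_n(\bs{a} + r\bx, \bu)$. A direct chain-rule check shows that $\Dtwo$ is invariant under this affine rescaling in $\bx$ up to the scalar $r^{-2}$, so $g_n \in C^2(\overline{\Bm} \times \overline{\Bm}, \HK(\C))$ and $\Dtwo g_n = 0$ in $\Bm \times \Bm$. By the first step, $g_n \to g$ uniformly on $\overline{\Bm} \times \overline{\Bm}$, where $g(\bx, \bu) := f(\bs{a} + r\bx, \bu)$. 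Each $g_n$ satisfies the Poisson integral formula \eqref{PI}, and the uniform convergence of boundary values on $\Smone \times \Smone$ lets me pass to the limit inside the integral to obtain the same representation for $g$. Since the Poisson kernel is smooth in $(\bx, \bv)$ for $\bx \in \Bm$ and is annihilated by $\Dtwo_{(\bx, \bv)}$, differentiation under the integral sign then yields $g \in C^\infty(\Bm \times \Bm, \HK(\C))$ with $\Dtwo g = 0$. Rescaling back and letting $\bs{a}$ range over $\Omega$ completes the argument.

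The hardest step will be justifying the passage to the limit in the Poisson integral formula. Proposition \ref{BergmanPointEv} only gives uniform convergence on compact subsets of the open ball, with a bound that degenerates at the boundary; I bridge this gap by exploiting the equivalence of norms on the finite-dimensional polynomial space $\HK(\C)$, which promotes compact-uniform control in $\bu$ to uniform control on $\overline{\Bm}$. Everything else---the dilation, the limit inside the integral, and the differentiation under the integral sign---is routine once this reduction is in place.
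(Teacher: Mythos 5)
Your proof is correct, and it shares the paper's skeleton while being considerably more self-contained. The paper disposes of this proposition in one sentence: it combines Proposition \ref{BergmanPointEv} with ``a similar argument as in Proposition \ref{hpbanach}'', whose proof uses the point-evaluation bound to obtain a uniformly Cauchy sequence on compact subsets and then invokes \cite[Proposition 5.8]{DTR} (a locally uniform limit of null solutions of $\Dtwo$ is again a $C^2$ null solution) as a black box, finishing with the a.e.\ identification exactly as you do. You share the first and last steps, but you re-derive the cited convergence theorem inline: the translation-dilation covariance $\Dtwo \mapsto r^{-2}\Dtwo$ in the $\bx$ variable (correct, since every term of \eqref{Dtwo} carries exactly two $\bx$-derivatives and no explicit $\bx$), the Poisson representation on the unit ball, passage to the limit in the boundary integral, and differentiation under the integral sign --- a step the paper itself licenses when it observes in Section 5.1 that $\Dtwo P[\mu]=0$. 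The genuinely new ingredient you needed, and supplied, is the upgrade of convergence in the $\bu$ variable to the closed ball: the point-evaluation constant degenerates as $d(\bv,\Smone)\to 0$, and your appeal to norm equivalence on the finite-dimensional space $\HK(\C)$ bridges this (in fact, by $k$-homogeneity one has exactly $\sup_{\overline{\Bm}}|h| = 2^k \sup_{\overline{B(0,1/2)}}|h|$ for $h\in\HK(\C)$, so the constant is explicit); the paper's route never confronts this issue because the cited convergence theorem only requires locally uniform convergence. One small point worth recording in your write-up: the limit slices $g(\bx,\cdot)$ remain in $\HK(\C)$ because $\HK(\C)$ is a closed finite-dimensional subspace, so the limiting boundary datum is admissible for \eqref{PI}. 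Net comparison: the paper's argument is shorter but leans on an external result from \cite{DTR}, whereas yours runs entirely on facts quoted within this paper.
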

In particular, let $p=2$, we have that $b^2(\Omega\times\Bm,\HK(\C))$ is a Hilbert space with inner product 
\begin{eqnarray*}\label{innerb2}
\langle f, g\rangle_{b^2}:=\int_{\Omega}\int_{\Bm}\overline{f(\bx,\bu)}g(\bx,\bu)d\bu d\bx,\ f,g\in L^2(\Omega\times\Bm,\HK(\C)).
\end{eqnarray*}
Now, for each fixed $\bx\in\Omega$ and $\bv\in\Bm$, with Proposition \ref{BergmanPointEv}, we notice that the map $f\mapsto f(\bx,\bv)$ is a bounded linear functional on the Hilbert space $b^2(\Omega\times\Bm,\HK(\C))$. Hence, there exists a unique function $R_{k,\Omega\times\Bm}(\cdot,\bx,\cdot,\bv)\in b^2(\Omega\times\Bm,\HK(\C))$, such that 
\be
f(\bx,\bv)=\langle R_{k,\Omega\times\Bm}(\cdot,\bx,\cdot,\bv),f\rangle_{b^2}=\int_{\Omega}\int_{\Bm}\overline{R_{k,\Omega\times\Bm}(\by,\bx,\bu,\bv)}f(\by,\bu)d\bu d\by.
\ee
We call the function $R_{k,\Omega\times\Bm}$ the \emph{reproducing kernel} of $b^2(\Omega\times\Bm,\HK(\C))$. One can also obtain similar properties for $R_{k,\Omega\times\Bm}$ with a similar proof as in Proposition \ref{propR}.
\begin{proposition}
The reproducing kernel $R_{k,\Omega\times\Bm}$ has the following properties.
\begin{enumerate}
\item $R_{k,\Omega\times\Bm}$ is real valued.
\item $R_{k,\Omega\times\Bm}(\by,\bx,\bu,\bv)=R_{k,\Omega\times\Bm}(\bx,\by,\bv,\bu)$ for all $\bx,\by\in\Omega$ and $\bu,\bv\in\Bm$.
\item If $\{\phi_j\}_{j=1}^{\infty}$ is an orthonormal basis for $b^2(\Omega\times\Bm,\HK(\C))$, then
\be
R_{k,\Omega\times\Bm}(\by,\bx,\bu,\bv)=\sum_{j=1}^{\infty}\overline{\phi_j(\bx,\bv)}\phi_j(\by,\bu).
\ee
\item $\|R_{k,\Omega\times\Bm}(\cdot,\bx,\cdot,\bv)\|_{b^2(\Omega\times\Bm,\HK(\C))}=\sqrt{R_{k,\Omega\times\Bm}(\bx,\bx,\bv,\bv)}$ for all $\bx\in\Omega$ and $\bv\in\Bm$.
\end{enumerate}
\end{proposition}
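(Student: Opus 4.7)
The plan is to mirror the proof of Proposition \ref{propR}, replacing the surface-integral inner product on $\Smone\times\Smone$ by the volume-integral inner product on $\Omega\times\Bm$ defined in \eqref{innerb2}. The structural inputs are the reproducing identity, the fact that $b^2(\Omega\times\Bm,\HK(\C))$ is a Hilbert space (hence separable and admitting an orthonormal basis), and the observation that $\Dtwo$ has real coefficients, so both $\ker\Dtwo$ and $\HK(\C)$ are closed under taking real and imaginary parts.

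For (1), I would fix $(\bx,\bv)\in\Omega\times\Bm$ and first argue that $h:=\mathrm{Im}\,R_{k,\Omega\times\Bm}(\cdot,\bx,\cdot,\bv)$ itself lies in $b^2(\Omega\times\Bm,\HK(\C))$: it is $L^2$-integrable because $R_{k,\Omega\times\Bm}(\cdot,\bx,\cdot,\bv)$ is, it is annihilated by $\Dtwo$ because $\Dtwo$ has real coefficients, and it takes values in $\HK(\C)$ because a $k$-homogeneous complex-valued harmonic polynomial in $\bu$ splits into real and imaginary parts which are themselves $k$-homogeneous real harmonic polynomials. Applied to any real-valued $f\in b^2(\Omega\times\Bm,\HK(\C))$, the reproducing identity
\begin{equation*}
f(\bx,\bv)=\int_{\Omega}\int_{\Bm}\overline{R_{k,\Omega\times\Bm}(\by,\bx,\bu,\bv)}\,f(\by,\bu)\,d\bu\,d\by
\end{equation*}
has real left-hand side, so taking imaginary parts gives $\int_{\Omega}\int_{\Bm}h(\by,\bu)f(\by,\bu)\,d\bu\,d\by=0$. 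Substituting $f=h$ yields $\|h\|_{b^2}=0$, forcing $h\equiv 0$.

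For (3), I would take any orthonormal basis $\{\phi_j\}_{j=1}^\infty$ of $b^2(\Omega\times\Bm,\HK(\C))$ and expand $R_{k,\Omega\times\Bm}(\cdot,\bx,\cdot,\bv)=\sum_j c_j(\bx,\bv)\phi_j$ in $L^2$-norm. The Fourier coefficients, computed via the reproducing property applied to $\phi_j$, satisfy
\begin{equation*}
c_j(\bx,\bv)=\langle\phi_j,R_{k,\Omega\times\Bm}(\cdot,\bx,\cdot,\bv)\rangle_{b^2}=\overline{\langle R_{k,\Omega\times\Bm}(\cdot,\bx,\cdot,\bv),\phi_j\rangle_{b^2}}=\overline{\phi_j(\bx,\bv)},
\end{equation*}
which is precisely the claimed expansion. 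Property (2) then follows by combining (3) with (1): swapping $(\bx,\bv)\leftrightarrow(\by,\bu)$ in the expansion produces the complex conjugate of the original series, and since $R_{k,\Omega\times\Bm}$ is real, conjugation leaves it invariant. Finally, (4) is obtained by plugging $f=R_{k,\Omega\times\Bm}(\cdot,\bx,\cdot,\bv)$ into the reproducing identity, giving $R_{k,\Omega\times\Bm}(\bx,\bx,\bv,\bv)=\|R_{k,\Omega\times\Bm}(\cdot,\bx,\cdot,\bv)\|^2_{b^2}$, and taking square roots.

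The only non-routine point is the bookkeeping in step (1): one must verify that $h=\mathrm{Im}\,R_{k,\Omega\times\Bm}(\cdot,\bx,\cdot,\bv)$ is a legitimate element of $b^2(\Omega\times\Bm,\HK(\C))$, and that real-valued test functions generate enough information. Both are straightforward consequences of the real-coefficient structure of $\Dtwo$ and of $\HK(\C)$ splitting into real and imaginary components. Beyond this, the argument is standard Hilbert-space manipulation, which is exactly why the authors advertise the proof as parallel to Proposition \ref{propR}.
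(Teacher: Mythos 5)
Your proposal is correct and follows essentially the same route as the paper, which proves this proposition only by reference to the argument of Proposition \ref{propR}: real-valuedness via testing against $\mathrm{Im}\,R_{k,\Omega\times\Bm}(\cdot,\bx,\cdot,\bv)$, the orthonormal expansion via the reproducing identity applied to basis elements, symmetry by conjugating that expansion, and the norm identity by plugging the kernel into itself. Your extra verification that $\mathrm{Im}\,R_{k,\Omega\times\Bm}(\cdot,\bx,\cdot,\bv)\in b^2(\Omega\times\Bm,\HK(\C))$ is a welcome refinement the paper leaves implicit; the only detail you could add is that in (3) the $L^2$-convergent series also converges pointwise because point evaluations are continuous on $b^2$ by Proposition \ref{BergmanPointEv}.
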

Since $b^2(\Omega\times\Bm,\HK(\C))$ is a closed subspace of the Hilbert space $L^2(\Omega\times\Bm,\C)$, there is a unique orthogonal projection of $L^2(\Omega\times\Bm,\C)$ onto $b^2(\Omega\times\Bm,\HK(\C))$, denoted by $B_{k,\Omega\times\Bm}$. We call this projection the bosonic Bergman projection on $\Omega\times\Bm$. The following proposition reveals the connection between the bosonic Bergman projection and the reproducing kernel $R_{k,\Omega\times\Bm}$.
\begin{proposition}\label{BergmanProjection}
Suppose $\bx\in\Omega$ and $\bv\in\Bm$, then we have
\be
B_{k,\Omega\times\Bm}[f](\bx,\bv)=\int_{\Omega}\int_{\Bm}R_{k,\Omega\times\Bm}(\by,\bx,\bu,\bv)f(\by,\bu)d\bu d\by,
\ee
for all $f\in L^2(\Omega\times\Bm,\C)$.
\end{proposition}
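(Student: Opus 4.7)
The plan is to exploit the orthogonal decomposition $L^2(\Omega\times\Bm,\C) = b^2(\Omega\times\Bm,\HK(\C)) \oplus b^2(\Omega\times\Bm,\HK(\C))^{\perp}$ together with the reproducing and symmetry properties of $R_{k,\Omega\times\Bm}$ established in the preceding proposition. Given $f\in L^2(\Omega\times\Bm,\C)$, I would first write $f = B_{k,\Omega\times\Bm}[f] + g$, where $g := f - B_{k,\Omega\times\Bm}[f]$ lies in the orthogonal complement of $b^2(\Omega\times\Bm,\HK(\C))$ with respect to the $L^2$ inner product $\langle\cdot,\cdot\rangle_{b^2}$, which is simply the restriction of the ambient $L^2$ inner product.

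Next, I would rewrite the right-hand side of the proposition as an $L^2$ inner product. Since $R_{k,\Omega\times\Bm}$ is real-valued by property $(1)$ of the previous proposition, one has
\begin{align*}
\int_{\Omega}\int_{\Bm} R_{k,\Omega\times\Bm}(\by,\bx,\bu,\bv) f(\by,\bu)\, d\bu\, d\by
= \langle R_{k,\Omega\times\Bm}(\cdot,\bx,\cdot,\bv), f\rangle_{L^2}.
\end{align*}
Now I would split this inner product using linearity and the decomposition $f = B_{k,\Omega\times\Bm}[f] + g$ into two pieces.

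For the piece involving $g$, I would observe that $R_{k,\Omega\times\Bm}(\cdot,\bx,\cdot,\bv)$ belongs to $b^2(\Omega\times\Bm,\HK(\C))$ by its very definition as a reproducing kernel, while $g$ lies in $b^2(\Omega\times\Bm,\HK(\C))^{\perp}$; hence this contribution vanishes. For the piece involving $B_{k,\Omega\times\Bm}[f]$, the inner product $\langle R_{k,\Omega\times\Bm}(\cdot,\bx,\cdot,\bv), B_{k,\Omega\times\Bm}[f]\rangle_{L^2}$ coincides with $\langle R_{k,\Omega\times\Bm}(\cdot,\bx,\cdot,\bv), B_{k,\Omega\times\Bm}[f]\rangle_{b^2}$, and by the defining reproducing property of $R_{k,\Omega\times\Bm}$ applied to $B_{k,\Omega\times\Bm}[f]\in b^2(\Omega\times\Bm,\HK(\C))$, this equals $B_{k,\Omega\times\Bm}[f](\bx,\bv)$.

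I do not expect any genuine obstacle here: the argument is the standard reproducing-kernel projection formula and all ingredients (Hilbert space structure, real-valuedness of $R_{k,\Omega\times\Bm}$, the reproducing identity, and the orthogonality of $g$ to $b^2$) have already been established in the preceding propositions. The only care needed is to make the identification of the $L^2$ and $b^2$ inner products on pairs of Bergman-space elements explicit, and to invoke property $(1)$ to drop the complex conjugate in the integrand of the right-hand side.
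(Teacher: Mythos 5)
Your proposal is correct and is essentially the paper's own argument: the paper starts from $B_{k,\Omega\times\Bm}[f](\bx,\bv)=\langle R_{k,\Omega\times\Bm}(\cdot,\bx,\cdot,\bv),B_{k,\Omega\times\Bm}[f]\rangle_{b^2}$ and invokes the self-adjointness of the orthogonal projection together with $B_{k,\Omega\times\Bm}[R_{k,\Omega\times\Bm}(\cdot,\bx,\cdot,\bv)]=R_{k,\Omega\times\Bm}(\cdot,\bx,\cdot,\bv)$, which is precisely the orthogonality you make explicit by writing $f=B_{k,\Omega\times\Bm}[f]+g$ with $g$ orthogonal to the Bergman space. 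Both proofs then finish by using the real-valuedness of the kernel to drop the conjugate, so the two arguments differ only in phrasing.
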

\begin{proof}
Let $f\in L^2(\Omega\times\Bm,\C)$, $\bx\in\Omega$ and $\bv\in\Bm$. Then
\begin{align*}
B_{k,\Omega\times\Bm}[f](\bx,\bv)=\langle R_{k,\Omega\times\Bm}(\cdot,\bx,\cdot,\bv),B_{k,\Omega\times\Bm}[f]\rangle_{b^2}.
\end{align*}
Since $B_{k,\Omega\times\Bm}$ is an orthogonal projection, it is also self-adjoint and $R_{k,\Omega\times\Bm}(\bx,\cdot,\bv,\cdot)\in b^2(\Omega\times\Bm,\HK(\C))$ tells us that $B_{k,\Omega\times\Bm}[R_{k,\Omega\times\Bm}(\cdot,\bx,\cdot,\bv)]=R_{k,\Omega\times\Bm}(\cdot,\bx,\cdot,\bv)$. Therefore, the equation above is equal to
\be
\langle R_{k,\Omega\times\Bm}(\cdot,\bx,\cdot,\bv),f\rangle_{b^2}=\int_{\Omega}\int_{\Bm}\overline{R_{k,\Omega\times\Bm}(\by,\bx,\bu,\bv)}f(\by,\bu)d\bu d\by,
\ee
which completes the proof since $R_{k,\Omega\times\Bm}$ is real-valued.
\end{proof}
\subsection{Reproducing kernels on the unit ball}
In this section, we will introduce the connection between the reproducing kernel $R_{k,\Bm\times\Bm}$ and the reproducing kernel $J_{l,k}$ of $\BLK$ (Section $4.3$). This also provides an expression of $B_{k,\Bm\times\Bm}$ in terms of $J_{l,k}$.
\par
Recall that in \eqref{JLK}, we define the reproducing kernel $J_{l,k}$ with integrals over the unit sphere. Now, we will firstly use polar coordinates to obtain an analog of \eqref{JLK} with integrals over the unit ball for $f\in\BBK$.
\begin{align}\label{BtoS}
& \int_{\Bm}\int_{\Bm}\overline{J_{l,k}(\by,\bx,\bu,\bv)}f(\by,\bu)d\bu d\by\nonumber\\
=&\int_0^1\int_{0}^1\int_{\Smone}\int_{\Smone}r_1^{m-1}r_2^{m-1}\overline{J_{l,k}(r_1\ze,\bx,r_2\et,\bv)}f(r_1\ze,r_2\et)dS(\ze)dS(\et)dr_1dr_2\nonumber\\
=&\int_0^1\int_{0}^1\int_{\Smone}\int_{\Smone}r_1^{m+2l-1}r_2^{m+2k-1}\overline{J_{l,k}(\ze,\bx,\et,\bv)}f(\ze,\et)dS(\ze)dS(\et)dr_1dr_2\nonumber\\
=&(m+2k)^{-1}(m+2l)^{-1}\int_{\Smone}\int_{\Smone}\overline{J_{l,k}(\ze,\bx,\et,\bv)}f(\ze,\et)dS(\ze)dS(\et)\nonumber\\
=&(m+2k)^{-1}(m+2l)^{-1}f(\bx,\bv),
\end{align}
where $\ze=\frac{\by}{|\by|}$ and $\et=\frac{\bu}{|\bu|}$. Now, we claim that all homogeneous polynomial null solutions in $\mathcal{P}(\Bm\times\Bm,\HK(\C))$ for $\Dtwo$ are dense in $b^2(\Bm\times\Bm,\HK(\C))$. In other words,
\begin{proposition}
There holds
\be
b^2(\Bm\times\Bm,\HK(\C))=\bigoplus_{l=0}^{\infty}\mathcal{B}_l(\Bm\times\Bm,\HK(\C)).
\ee
\end{proposition}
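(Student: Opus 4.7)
The plan is to verify three things: (i) each $\mathcal{B}_l(\Bm\times\Bm,\HK(\C))\subset b^2$, (ii) distinct $\mathcal{B}_l$'s are orthogonal with respect to the $b^2$ inner product, and (iii) their linear span is dense in $b^2$. Property (i) is immediate: polynomials are bounded on $\overline{\Bm}\times\overline{\Bm}$, hence in $L^2$. For (ii), take $f\in\mathcal{B}_s$ and $g\in\mathcal{B}_t$ with $s\ne t$. Passing to polar coordinates in both $\bx$ and $\bu$ and using the bi-homogeneity (degrees $s$ and $t$ in $\bx$, both of degree $k$ in $\bu$), the radial and angular integrals decouple and
\be
\langle f,g\rangle_{b^2}=\frac{1}{(m+s+t)(m+2k)}\int_{\Smone}\int_{\Smone}\overline{f(\ze,\et)}g(\ze,\et)\,dS(\et)\,dS(\ze),
\ee
which vanishes by Theorem \ref{thmortho} (and the remark following it).

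For (iii), fix $f\in b^2$ and for $0<\rho<1$ set $f_\rho(\bx,\bu):=f(\rho\bx,\bu)$. A direct calculation using the homogeneity of each term in \eqref{Dtwo} gives $\Dtwo f_\rho=\rho^2(\Dtwo f)(\rho\bx,\bu)=0$, so $f_\rho$ is a null solution on $B(0,1/\rho)\times\Bm$. Since $\overline{\Bm}$ is a compact subset of $B(0,1/\rho)$, Proposition \ref{PropConverge} produces an expansion $f_\rho=\sum_{l=0}^{\infty}g_{\rho,l}$ with $g_{\rho,l}\in\mathcal{B}_l(\R^m\times\Bm,\HK(\C))$ converging uniformly on $\overline{\Bm}\times\overline{\Bm}$, hence in $L^2(\Bm\times\Bm)$. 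Therefore each $f_\rho$ belongs to the $L^2$-closure of $\bigoplus_l\mathcal{B}_l(\Bm\times\Bm,\HK(\C))$, and it remains to show $f_\rho\to f$ in $L^2$ as $\rho\to 1^-$.

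The main obstacle is precisely this dilation-continuity step, since $L^2$ convergence of dilations is not automatic for general $L^2$ functions. I would combine three ingredients. First, null solutions are continuous on $\Bm\times\Bm$ (in fact real analytic in $\bx$ by Theorem \ref{analytic}), giving $f_\rho\to f$ pointwise. Second, the substitution $\by=\rho\bx$ yields $\|f_\rho\|_{L^2}^2=\rho^{-m}\int_{B(0,\rho)\times\Bm}|f|^2$, which tends to $\|f\|_{L^2}^2$ by monotone convergence and provides the uniform bound $\|f_\rho\|_{L^2}\le\rho^{-m/2}\|f\|_{L^2}$ for $\rho$ bounded below. Third, for weak convergence I would test $f_\rho$ against product functions $h(\bx,\bu)=\phi(\bx)\psi(\bu)$ with $\phi\in C_c(\Bm)$ and $\psi\in L^2(\Bm)$: on $\mathrm{supp}(\phi)\times\overline{\Bm}$ the function $f$ is bounded, so dominated convergence yields $\langle h,f_\rho\rangle_{b^2}\to\langle h,f\rangle_{b^2}$, and density of such $h$ in $L^2(\Bm\times\Bm)$ together with the uniform $L^2$-bound extends this to every test function in $L^2$. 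Norm convergence plus weak convergence then gives
\be
\|f_\rho-f\|_{L^2}^2=\|f\|_{L^2}^2+\|f_\rho\|_{L^2}^2-2\mathrm{Re}\langle f,f_\rho\rangle_{b^2}\longrightarrow 0,
\ee
finishing (iii) and hence the theorem.
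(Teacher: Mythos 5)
Your proposal is correct, and its overall skeleton coincides with the paper's: both establish orthogonality of $\mathcal{B}_s$ and $\mathcal{B}_t$ for $s\neq t$ by passing to polar coordinates and invoking Theorem \ref{thmortho}, and both obtain density via the dilation trick --- replace $f$ by $f_\rho(\bx,\bu)=f(\rho\bx,\bu)$, a null solution on the larger set $B(0,1/\rho)\times\Bm$, and expand $f_\rho$ via Proposition \ref{PropConverge} into homogeneous null solutions converging uniformly on $\overline{\Bm}\times\overline{\Bm}$, hence in $L^2$. The genuine divergence lies in the step you correctly isolate as the main obstacle, namely $f_\rho\to f$ in $L^2$ as $\rho\to 1^-$. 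The paper proves this for an \emph{arbitrary} $f\in L^2(\Bm\times\Bm,\HK(\C))$ by a three-epsilon argument: approximate $f$ by $g_s\in C(\overline{\Bm}\times\overline{\Bm},\HK(\C))$, control $\|g_s-(g_s)_\rho\|_{L^2}$ by uniform continuity of $g_s$, and bound $\|(g_s)_\rho-f_\rho\|_{L^2}$ by a change of variables. You instead exploit that $f$ is itself a continuous null solution and use the Hilbert-space fact that weak convergence together with convergence of norms implies strong convergence: monotone convergence gives $\|f_\rho\|_{L^2}\to\|f\|_{L^2}$, while dominated convergence against product test functions $\phi(\bx)\psi(\bu)$ with $\phi\in C_c(\Bm)$, combined with the uniform bound $\|f_\rho\|_{L^2}\le \rho^{-m/2}\|f\|_{L^2}$, yields weak convergence. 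Both routes are sound; the paper's density argument is $p$-independent and would transfer verbatim to an $L^p$ setting, whereas yours is specific to $p=2$, but it avoids the uniform-continuity bookkeeping and has the virtue of making the dilation invariance $\Dtwo f_\rho=\rho^2(\Dtwo f)(\rho\bx,\bu)$ explicit, which the paper merely asserts. One small point you gloss over: the boundedness of $f_\rho$ on $\mathrm{supp}(\phi)\times\overline{\Bm}$ uniformly in $\rho$ near $1$ deserves a word --- it follows since $\rho\,\mathrm{supp}(\phi)$ stays in a fixed compact subset $K\subset\Bm$ and $f(\bx,\bu)=\sum_{j=1}^{d_k}f_j(\bx)\varphi_j(\bu)$ with $f_j$ continuous on $\Bm$ and $\{\varphi_j\}$ a basis of the finite-dimensional space $\HK(\C)$, so $f$ is bounded on $K\times\overline{\Bm}$; this is routine and not a gap.
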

\begin{proof}
We firstly notice that for a function $f\in L^2(\Bm\times\Bm,\HK(\C))$, $f_l(\bx,\bv):=f(\frac{l-1}{l}\bx,\bv)\in L^2(\Bm\times\Bm,\HK(\C))$ for $l=1,2,\cdots$. Further, we can see that $f_l$ converges to $f$ in $L^2(\Bm\times\Bm,\HK(\C))$ when $l\rightarrow \infty$. This can be observed from the case $f\in C(\overline{\Bm}\times\overline{\Bm},\HK(\C))$ and the fact that $C(\overline{\Bm}\times\overline{\Bm},\HK(\C))$ is dense in $L^2(\Bm\times\Bm,\HK(\C))$. More specifically, for $f\in L^2(\Bm\times\Bm,\HK(\C))$, there exists a sequence $\{g_s\}_{s=1}^{\infty}\in C(\overline{\Bm}\times\overline{\Bm},\HK(\C))$ such that for arbitrary $\epsilon>0$, there exists $N>0$, such that $\|f-g_s\|_{L^2(\Bm\times\Bm,\HK(\C))}<\epsilon$ when $s>N$. Let $g_{s,l}(\bx,\bv)=g_s(\frac{l-1}{l}\bx,\bv)$, then we can easily check that $\|f_l-g_{s,l}\|_{L^2(\Bm\times\Bm,\HK(\C))}<\frac{l}{l-1}\epsilon$ when $s>N$. Therefore, we have
\begin{align}\label{3esti}
\|f-f_l\|_{L^2(\Bm\times\Bm,\HK(\C))}\leq& \|f-g_s\|_{L^2(\Bm\times\Bm,\HK(\C))}+\|g_s-g_{s,l}\|_{L^2(\Bm\times\Bm,\HK(\C))}\nonumber\\
&+\|g_{s,l}-f_l\|_{L^2(\Bm\times\Bm,\HK(\C))}
\leq \epsilon+\epsilon+\frac{l}{l-1}\epsilon=\frac{3l-1}{l}\epsilon,
\end{align}
when $l>N'$ and $s>N$, where $N'$ is sufficiently large so that $\|g_s-g_{s,l}\|_{L^2(\Bm\times\Bm,\HK(\C))}<\epsilon$, which comes from the continuity of $g_s$.
\par
Since $\ker\Dtwo$ is invariant with respect to dilations,  any function $f\in b^2(\Bm\times\Bm,\HK(\C))$ can be approximated by a sequence of functions $\{f_l\}_{l=1}^{\infty}$ satisfying $\Dtwo f_l=0$ in $\overline{\Bm}\times\overline{\Bm}$. Further, notice that $f_l\in C^2(\frac{l}{l-1}\Bm\times\Bm,\HK(\C))$, then Proposition \ref{PropConverge} tells us that each $f_l$ can be approximated by a sequence of homogeneous polynomial null solutions to $\Dtwo$, which converges absolutely and uniformly in $\overline{\Bm}\times\overline{\Bm}$. Hence, with a similar argument as applied in (\ref{3esti}), there exists a sequence of homogeneous polynomials null solutions to $\Dtwo$ which converges to $f$ in $L^2(\Bm\times\Bm,\HK(\C))$ as desired.
\par
The claim that the decomposition is an orthogonal decomposition can be observed by changing to polar coordinates and applying Theorem \ref{thmortho}.
\end{proof}
The proposition below provides a series of expansion for the reproducing kernel $R_{k,\Bm\times\Bm}$ in terms of $J_{l,k}$.
\begin{proposition}
Let $\bx,\bv\in\Bm$, then we have
\be
R_{k,\Bm\times\Bm}(\by,\bx,\bu,\bv)=\sum_{l=0}^{\infty}(m+2l)(m+2k)J_{l,k}(\by,\bx,\bu,\bv),
\ee
the series converges absolutely and uniformly in $\Bm\times K\times\Bm\times\Bm$ for all compact subset $K\subset \Bm$.
\end{proposition}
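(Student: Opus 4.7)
The plan is to recognise the series as an expansion of the reproducing kernel of $b^2$ using the orthogonal decomposition $b^2(\Bm\times\Bm,\HK(\C))=\bigoplus_l \mathcal{B}_l(\Bm\times\Bm,\HK(\C))$ from the preceding proposition. The scalar $(m+2l)(m+2k)$ in the formula is exactly what turns $J_{l,k}$ into a reproducing kernel over the ball: setting $\tilde J_{l,k}:=(m+2l)(m+2k)J_{l,k}$, the computation in \eqref{BtoS} yields
\[
f(\bx,\bv)=\int_\Bm\int_\Bm\overline{\tilde J_{l,k}(\by,\bx,\bu,\bv)}f(\by,\bu)\,d\bu\, d\by
\]
for every $f\in\mathcal{B}_l(\Bm\times\Bm,\HK(\C))$. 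Since $\tilde J_{l,k}(\cdot,\bx,\cdot,\bv)\in\BBK$, it is the reproducing kernel of that summand in $b^2$.

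For the convergence I would first upgrade the sharp on-sphere bound $|J_{l,k}|\leq\dim\BLK$ of Proposition \ref{propR}(5) by exploiting the fact that $J_{l,k}$ is a polynomial homogeneous of degree $l$ in both $\by$ and $\bx$ and of degree $k$ in both $\bu$ and $\bv$ (homogeneity in $\bx,\bv$ comes from $\BLK$; homogeneity in $\by,\bu$ follows from the symmetry $(2)$ of Proposition \ref{propR}, or from the basis representation $J_{l,k}=\sum_j\overline{\phi_j(\bx,\bv)}\phi_j(\by,\bu)$). Scaling each argument to the sphere produces
\[
|J_{l,k}(\by,\bx,\bu,\bv)|\leq \dim\BLK\,|\by|^l|\bx|^l|\bu|^k|\bv|^k.
\]
Since $\dim\BLK=\dim\mathcal{H}_l(\C)\dim\HK(\C)=O(l^{m-2})$, the general term of the series is $O(l^{m-1}(|\by||\bx|)^l)$ uniformly in $\bu,\bv\in\overline{\Bm}$. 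For $\bx\in K$ compact with $r:=\max_K|\bx|<1$ and $\by\in\overline{\Bm}$ this is dominated by $Cl^{m-1}r^l$, which is summable, giving absolute and uniform convergence on $\overline{\Bm}\times K\times\overline{\Bm}\times\overline{\Bm}$.

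To finish, set $R(\by,\bx,\bu,\bv):=\sum_l\tilde J_{l,k}(\by,\bx,\bu,\bv)$. Fix $(\bx,\bv)\in\Bm\times\Bm$, pick $f\in b^2(\Bm\times\Bm,\HK(\C))$, and write $f=\sum_m f_m$ with $f_m\in\mathcal{B}_m(\Bm\times\Bm,\HK(\C))$ via the orthogonal decomposition of $b^2$ from the previous proposition (whose orthogonality on the ball is inherited from Theorem \ref{thmortho} through the polar-coordinate computation analogous to \eqref{BtoS}). The uniform convergence of the partial sums of $R(\cdot,\bx,\cdot,\bv)$ on $\overline{\Bm}\times\overline{\Bm}$ upgrades to $L^2$-convergence there, so the $b^2$-inner product commutes with the outer sum; orthogonality annihilates the cross terms $\langle\tilde J_{l,k},f_m\rangle_{b^2}$ for $m\neq l$; and \eqref{BtoS} leaves $\langle\tilde J_{l,k},f_l\rangle_{b^2}=f_l(\bx,\bv)$. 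Summing, and using continuity of point evaluation at $(\bx,\bv)$ (Proposition \ref{BergmanPointEv}) to identify $\sum_l f_l(\bx,\bv)=f(\bx,\bv)$, gives $\langle R(\cdot,\bx,\cdot,\bv),f\rangle_{b^2}=f(\bx,\bv)$. Uniqueness of the reproducing kernel then yields $R=R_{k,\Bm\times\Bm}$.

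The main obstacle I expect is the off-sphere polynomial estimate for $J_{l,k}$, together with transferring the orthogonality of Theorem \ref{thmortho} (originally on $\Smone\times\Smone$) to $\Bm\times\Bm$; once these are in place, the remainder is a textbook assembly of orthogonal expansions, the reproducing identity \eqref{BtoS}, and continuity of the $b^2$-inner product.
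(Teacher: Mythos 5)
Your proposal is correct and takes essentially the same approach as the paper: the same homogeneity-based estimate $|J_{l,k}(\by,\bx,\bu,\bv)|\leq\dim\BLK\,|\by|^l|\bx|^l|\bu|^k|\bv|^k$ with $\dim\BLK=O\big(l^{m-2}\big)$ for the absolute and uniform convergence, followed by the reproducing identity assembled from \eqref{BtoS}, the orthogonality of Theorem \ref{thmortho}, and the density/orthogonal decomposition of $b^2(\Bm\times\Bm,\HK(\C))$ from the preceding proposition. The only cosmetic difference is that you verify $\langle R(\cdot,\bx,\cdot,\bv),f\rangle_{b^2}=f(\bx,\bv)$ by expanding a general $f$ into its orthogonal series and using continuity of point evaluation, whereas the paper checks the identity on polynomial solutions first and then passes to general $f$ by density; these are interchangeable formulations of the same limiting argument.
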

\begin{proof}
For $\bx,\by,\bu,\bv\in\Bm\backslash{\{0\}}$, we have 
\begin{align*}
|J_{l,k}(\by,\bx,\bu,\bv)|&=|\bx\by|^l|\bu\bv|^k|J_{l,k}(\frac{\by}{|\by|},\frac{\bx}{|\bx|},\frac{\bu}{|\bu|},\frac{\bv}{|\bv|})|\leq|\bx\by|^l|\bu\bv|^k|\dim\HK\dim\mathcal{H}_l\\
&\leq C_m(lk)^{m-2}|\bx|^l|\by|^l|\bu|^k|\bv|^k.
\end{align*}
This implies that $\sum_{l=0}^{\infty}(m+2l)(m+2k)J_{l,k}(\by,\bx,\bu,\bv)$ converges absolutely and uniformly in $\Bm\times K\times\Bm\times\Bm$, where $K$ is a compact subset of $\Bm$. Now, if we denote $F(\by,\bx,\bu,\bv)=\sum_{l=0}^{\infty}(m+2l)(m+2k)J_{l,k}(\by,\bx,\bu,\bv)$, then $F(\cdot,\bx,\cdot,\bv)\in\ker\Dtwo$ is bounded in $\Bm\times\Bm$ for fixed $\bx,\bv\in\Bm$ and hence $F(\cdot,\bx,\cdot,\bv)\in b^2(\Bm\times\Bm,\HK(\C))$. Now, if $f$ is a polynomial solution to $\Dtwo$, then with the calculation in \eqref{BtoS} and the orthogonality given in Proposition \ref{thmortho}, we can easily obtain that $\langle F(\cdot,\bx,\cdot,\bv),f \rangle_{b^2}=f$ in $\Bm\times\Bm$. Further, the previous proposition tells us that a function in $b^2(\Bm\times\Bm,\HK(\C)$ can be approximated by polynomial solutions to $\Dtwo$. This immediately gives us that for any $f\in b^2(\Bm\times\Bm,\HK(\C)$, we have $\langle F(\cdot,\bx,\cdot,\bv),f \rangle_{b^2}=f$ in $\Bm\times\Bm$. This implies that $F(\cdot,\bx,\cdot,\bv)$ is the reproducing kernel of $b^2(\Bm\times\Bm,\HK(\C)$. which completes the proof.
\end{proof}
The next result provides an expression for the bosonic Bergman projection in terms of the reproducing kernel $J_{l,k}$.
\begin{proposition}
Let $f\in\mathcal{P}_s(\Bm\times\Bm,\HK(\C))$. Then $\deg_{\bx}B_{k,\Bm\times\Bm}[f]\leq s$, and 
\be
B_{k,\Bm\times\Bm}[f](\bx,\bv)=(m+2k)\sum_{l=0}^s(m+2l)\int_{\Bm}\int_{\Bm}J_{l,k}(\by,\bx,\bu,\bv)f(\by,\bu)d\bu d\by,
\ee
for all $\bx,\bv\in\Bm$.
\end{proposition}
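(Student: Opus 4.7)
The plan is to start from the formula
$$B_{k,\Bm\times\Bm}[f](\bx,\bv)=\int_\Bm\int_\Bm R_{k,\Bm\times\Bm}(\by,\bx,\bu,\bv)f(\by,\bu)\,d\bu\,d\by$$
provided by Proposition~\ref{BergmanProjection}, and to substitute the series expansion of $R_{k,\Bm\times\Bm}$ in terms of $J_{l,k}$ established in the preceding proposition. Since $f\in\mathcal{P}_s(\Bm\times\Bm,\HK(\C))$ is a polynomial, hence bounded on $\overline{\Bm}\times\overline{\Bm}$, and since $|J_{l,k}(\by,\bx,\bu,\bv)|\leq C_m(lk)^{m-2}|\by|^l|\bx|^l|\bu|^k|\bv|^k$ (the very estimate used in the proof of that expansion), the convergence of $\sum_l l^{m-1}|\bx|^l$ for fixed $\bx\in\Bm$ together with Fubini's theorem justifies interchanging the $\Bm\times\Bm$-integration with the infinite sum. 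This reduces the whole statement to showing that
$$I_l(\bx,\bv):=\int_\Bm\int_\Bm J_{l,k}(\by,\bx,\bu,\bv)f(\by,\bu)\,d\bu\,d\by=0\quad\text{whenever }l>s.$$

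To prove this vanishing I would first invoke Theorem~\ref{decomp} to decompose $f(\by,\bu)=\sum_{j=0}^{[s/2]}|\by|^{2j}f_{s-2j}(\by,\bu)$ with $f_{s-2j}\in\mathcal{B}_{s-2j}(\R^m\times\Bm,\HK(\C))$, and then pass to polar coordinates $\by=r_1\ze$, $\bu=r_2\et$ with $\ze,\et\in\Smone$. Exploiting the multi-homogeneity of $J_{l,k}$ (degree $l$ in $\by$, $k$ in $\bu$) and of $|\by|^{2j}f_{s-2j}$ (degree $s$ in $\by$, $k$ in $\bu$), the radial variables decouple and, after the elementary radial integrations, the $j$-th contribution to $I_l(\bx,\bv)$ becomes a positive scalar multiple of
$$\int_\Smone\int_\Smone J_{l,k}(\ze,\bx,\et,\bv)f_{s-2j}(\ze,\et)\,dS(\et)\,dS(\ze).$$
By the reality of $J_{l,k}$ (part $(1)$ of Proposition~\ref{propR}) this sphere integral coincides with the reproducing pairing $\langle J_{l,k}(\cdot,\bx,\cdot,\bv)\,|\,f_{s-2j}\rangle$ in $\BLKS$. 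As the restriction $f_{s-2j}|_{\Smone\times\Smone}$ belongs to $\mathcal{B}_{s-2j}(\Smone\times\Smone,\HK(\Smone))$, Theorem~\ref{thmortho} forces this pairing to vanish whenever $l\neq s-2j$. For $l>s$ no admissible $j\in\{0,\ldots,[s/2]\}$ can satisfy $l=s-2j$, so $I_l\equiv 0$, as required.

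Truncating the series at $l=s$ then produces the displayed identity, and the bound $\deg_\bx B_{k,\Bm\times\Bm}[f]\leq s$ is immediate since each surviving term inherits from $J_{l,k}(\by,\bx,\bu,\bv)$ the $\bx$-homogeneity degree $l\leq s$. The step I expect to require the most care is verifying the multi-homogeneity of $J_{l,k}$ off the unit sphere, which is precisely what allows the polar decomposition to separate radial from angular integrals; this is available through the explicit polynomial expression for $J_{l,k}$ obtained in Proposition~\ref{PJ} together with the remark following it, so the entire argument reduces to bookkeeping once the interchange of sum and integral is in place.
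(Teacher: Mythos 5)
Your proposal is correct and takes essentially the same route as the paper, which likewise combines Proposition \ref{BergmanProjection} with the series expansion of $R_{k,\Bm\times\Bm}$ in terms of $J_{l,k}$ and kills the terms with $l>s$ via Theorem \ref{decomp} and the orthogonality of Theorem \ref{thmortho} (your polar-coordinate argument in effect re-derives Corollary \ref{CorOrtho}, which the paper cites directly). The only difference is that you spell out the Fubini justification and the radial integrations that the paper leaves implicit in its two-sentence sketch.
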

\begin{proof}
This result can be observed from the previous proposition and Proposition \ref{BergmanProjection}. The orthogonality given in Corollary \ref{CorOrtho} and the decomposition given in Theorem \ref{decomp} explain the disappearance of the terms in the series with $l>s$.
\end{proof}
The following corollary tells us the connection between the bosonic Bergman projection of a function in $\mathcal{P}_{s}(\Bm\times\Bm,\HK(\C))$ and its Poisson integral.
\begin{corollary}
Let $f\in \mathcal{P}_{s}(\Bm\times\Bm,\HK(\C))$ and $\sum_{l=0}^{\infty}f_l$ is the solution to the Dirichlet problem in the unit ball with boundary data $f\big\vert_{\Smone\times\Smone}$, where $f_l\in\BBK$. Then, we have
\be
B_{k,\Bm\times\Bm}[f]=\sum_{l=0}^s\frac{m+2l}{m+l+s}f_l.
\ee
\end{corollary}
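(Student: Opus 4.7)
The plan is to combine the integral formula from the previous proposition with the decomposition in Theorem \ref{decomp} and a polar-coordinate computation modeled on \eqref{BtoS}. By the previous proposition,
$B_{k,\Bm\times\Bm}[f](\bx,\bv) = (m+2k)\sum_{l=0}^s (m+2l) I_l(\bx,\bv)$, where $I_l(\bx,\bv) := \int_{\Bm}\int_{\Bm} J_{l,k}(\by,\bx,\bu,\bv) f(\by,\bu)\, d\bu\, d\by$, so the task reduces to evaluating each $I_l$ and matching the result with the Dirichlet components $f_l$.

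First I would identify the $f_l$ explicitly. Apply Theorem \ref{decomp} to write $f(\bx,\bv) = \sum_{j=0}^{[s/2]} |\bx|^{2j} g_{s-2j}(\bx,\bv)$ with $g_{s-2j} \in \mathcal{B}_{s-2j}(\Bm\times\Bm,\HK(\C))$. On $\Smone \times \Smone$ this collapses to $f = \sum_j g_{s-2j}$, and since each $g_{s-2j} \in \ker\Dtwo$, the sum itself is a null solution with the prescribed boundary values; uniqueness of the Dirichlet problem \cite[Section 4]{DTR} then gives $P[f|_{\Smone\times\Smone}] = \sum_j g_{s-2j}$. Combining with the uniqueness of the expansion in Proposition \ref{PropConverge} (guaranteed by the orthogonality of the decomposition $\bigoplus_l \mathcal{B}_l$ from Theorem \ref{thmortho}) forces $f_l = g_l$ when $l \in \{s, s-2, \ldots\}$ and $f_l \equiv 0$ otherwise.

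Next I would compute each $I_l$ term by term on this decomposition, imitating \eqref{BtoS}. Passing to polar coordinates $\by = r_1\ze$, $\bu = r_2\et$, and using the homogeneities of $J_{l,k}$ (degree $l$ in $\by$, degree $k$ in $\bu$) and of $|\by|^{2j} g_{s-2j}(\by,\bu)$ (total degree $s = 2j + (s-2j)$ in $\by$, degree $k$ in $\bu$), the radial integrations produce the uniform factor $\frac{1}{(m+l+s)(m+2k)}$, and the spherical integral, via the reproducing identity \eqref{JLK} together with the orthogonality of $J_{l,k}$ against $\mathcal{B}_t$ for $t \neq l$, selects only the summand with $s-2j = l$. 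This yields $I_l(\bx,\bv) = \frac{f_l(\bx,\bv)}{(m+l+s)(m+2k)}$, with both sides vanishing when $l$ has opposite parity to $s$.

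Substituting back into the formula from the previous proposition cancels the $(m+2k)$ factors and produces $B_{k,\Bm\times\Bm}[f] = \sum_{l=0}^s \frac{m+2l}{m+l+s} f_l$, as desired. The only genuinely delicate step is the identification $f_l = g_l$, for which Dirichlet uniqueness and the orthogonality of distinct $\mathcal{B}_l$-spaces must be invoked together; the rest is a direct adaptation of \eqref{BtoS} in which the extra weight $|\by|^{2j}$ simply shifts the radial exponent from $m-1+2l$ to $m-1+l+s$, accounting for the denominator $m+l+s$ in the final answer.
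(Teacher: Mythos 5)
Your proposal is correct and follows essentially the same route as the paper: reduce via the preceding proposition, evaluate each integral $I_l$ in polar coordinates using the homogeneities of $J_{l,k}$ and $f$ (yielding the radial factor $(m+l+s)^{-1}(m+2k)^{-1}$), and use the reproducing identity \eqref{JLK} together with the orthogonality of Theorem \ref{thmortho} applied to the decomposition of Theorem \ref{decomp} to extract $f_l$. Your explicit identification $f_l=g_l$ via Dirichlet uniqueness is simply a spelled-out version of what the paper compresses into its final citation of Theorem \ref{decomp} and Theorem \ref{thmortho}, so there is no substantive difference.
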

\begin{proof}
For $0\leq l\leq s$ and $\bx,\bv\in\Bm$, one has
\begin{align*}
&\int_{\Bm}\int_{\Bm}J_{l,k}(\by,\bx,\bu,\bv)f(\by,\bu)d\bu d\by\\
=&\int_0^1\int_0^1\int_{\Smone}\int_{\Smone}r_1^{m-1}r_2^{m-1}J_{l,k}(r_1\ze,\bx,r_2\et,\bv)f(r_1\ze,r_2\et)dS(\et)dS(\ze)dr_1dr_2\\
=&\int_0^1\int_0^1\int_{\Smone}\int_{\Smone}r_1^{m+l+s-1}r_2^{m+2k-1}J_{l,k}(\ze,\bx,\et,\bv)f(\ze,\et)dS(\et)dS(\ze)dr_1dr_2\\
=&(m+l+s)^{-1}(m+2k)^{-1}\int_{\Smone}\int_{\Smone}J_{l,k}(\ze,\bx,\et,\bv)f(\ze,\et)dS(\et)dS(\ze)\\
=&(m+l+s)^{-1}(m+2k)^{-1}f_l(\bx,\bv),
\end{align*}

where the last equation is obtained by applying the Poisson integral formula \eqref{PI}, the expression of the Poisson kernel given in Proposition \ref{PJ}, the decomposition of $f(\ze,\et)$ given in Theorem \ref{decomp} and the orthogonality given in Theorem \ref{thmortho} . Plugging the equation above into the previous proposition gives us the desired result.
\end{proof}

\section{Hardy spaces related to bosonic Laplacians}
Recall that we defined the Poisson integral of a function $f\in C(\Smone\times\Smone,\HK(\Smone))$ in \cite[(3.13)]{DTR}. In this section, we generalize this definition for a certain complex measure space. Weak$^*$ convergence properties and growth estimates for the Poisson integrals of these complex measures are investigated. Further, the growth estimates lead us to a generalization of harmonic Hardy spaces in higher spin spaces, named as bosonic Hardy spaces. Some growth estimates and characterizations for functions in the bosonic Hardy spaces are also studied here.
\subsection{Poisson integrals of measures and weak$^*$ convergence}
Recall that  the Poisson integral given in \eqref{PI}, we call
\be
P(\ze,\bx,\bu,\bv)=\frac{c_{m,k}}{2}\frac{1-|\bx|^2}{|\bx-\ze|^m}Z_k\bigg(\frac{(\bx-\ze)\bs{u}(\bx-\ze)}{|\bx-\ze|^2},\bs{v}\bigg),\quad \bx,\bv\in\Bm,\ze,\bu\in\Smone
\ee
the Poisson kernel of the bosonic Laplacian in the unit ball. Now, we extend the definition of the Poisson integral above as follows. For a complex measure $\mu=\mu_1\times\mu_2$ on $\Smone\times\Smone$, where $\mu_i$ are finite complex measures on $\Smone$, $i=1,2$, the Poisson integral of $\mu$, denoted by $P[\mu]$, is given by
\begin{eqnarray}\label{PIB}
P[\mu](\bx,\bv):=\int_{\Smone}\int_{\Smone}P(\ze,\bx,\bu,\bv)d\mu_2(\bu)d\mu_1(\ze).
\end{eqnarray}
Further, differentiating under the integral sign above, we can see that $\Dtwo P[\mu]=0$ on $\Bm\times\Bm$.
\par
Let $M(\Smone\times\Smone)$ stand for the set of finite complex Borel measures on $\Smone\times\Smone$. The total variation norm of $\mu\in M(\Smone\times\Smone)$ is denoted by $\|\mu\|$. Since $M(\Smone\times\Smone)$ is a Banach space under the total variation norm, the Riesz Representation Theorem tells us that $M(\Smone\times\Smone)$ is isometrically isomorphic to the dual space of $C(\Smone\times\Smone)$ with the following identification
\begin{eqnarray}\label{iden}
M(\Smone\times\Smone) &\longrightarrow& C(\Smone\times\Smone)^*,\nonumber\\
\mu &\mapsto& \Lambda_{\mu},
\end{eqnarray}
where
\be
\Lambda_{\mu}(f)=\int_{\Smone}\int_{\Smone}fd\mu,\quad \text{for}\ f\in C(\Smone\times\Smone).
\ee
Let $L^p(\Smone\times\Smone)$, $1\leq p<\infty$ be the space of the Borel measurable functions $f$ on $\Smone\times\Smone$ for which
\be
\|f\|_p^p=\int_{\Smone}\int_{\Smone}|f(\bx,\bu)|^pdS(\bx)dS(\bu)<+\infty.
\ee
$L^{\infty}(\Smone\times\Smone)$ consists of the Borel measurable functions $f$ on $\Smone\times\Smone$ for which $\|f\|_{\infty}<+\infty$, where $\|f\|_{\infty}$ stands for the essential supremum norm on $\Smone\times\Smone$ with respect to $dS\times dS$.
\par
Let $C(\Smone\times\Smone,\HK(\C))$ stand for functions $f(\bx,\bu)\in C(\Smone\times\Smone)$ and for each fixed $\bx\in\Smone$, $f(\bx,\bu)\in \HK(\C)$ in the variable $\bu$. A similar definition applies to $L^p(\Smone\times\Smone,\HK(\C))$. We also define $M(\Smone\times\Smone,\HK(\C))$ to be the subspace of the space of finite complex measures on $\Smone\times\Smone$, which satisfies that for each $\mu=\mu_1\times\mu_2\in M(\Smone\times\Smone,\HK(\C))$, then $\mu\in M(\Smone\times\Smone)$ and $d\mu_2=hdS$, where $h$ is a $k$-homogeneous harmonic polynomial. We claim that the identification given in \eqref{iden} also provides an isometrical isomorphism between $M(\Smone\times\Smone,\HK(\C))$ and $C(\Smone\times\Smone,\HK(\C))^*$, which is stated as follows.
\begin{lemma}
$M(\Smone\times\Smone,\HK(\C))$  is isometrically isomorphic to the dual space of $C(\Smone\times\Smone,\HK(\C))$ with the identification
\be
M(\Smone\times\Smone,\HK(\C)) &\longrightarrow& C(\Smone\times\Smone,\HK(\C))^*,\nonumber\\
\mu &\mapsto& \Lambda_{\mu},
\ee
where
\be
\Lambda_{\mu}(f)=\int_{\Smone}\int_{\Smone}fd\mu,\quad \text{for}\ f\in C(\Smone\times\Smone,\HK(\C)).
\ee
\end{lemma}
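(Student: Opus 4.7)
My plan is to reduce the claim to the classical Riesz Representation Theorem on $\Smone$ by exploiting the finite-dimensionality of $\HK(\C)$. I would first interpret $M(\Smone\times\Smone,\HK(\C))$ as the linear span of the product measures just defined, so that it is in fact a vector space: concretely, as the space of measures $\mu$ on $\Smone\times\Smone$ whose disintegration against $\bx$ has the form $h(\bx,\bu)\,dS(\bu)$ with $h(\bx,\cdot)\in\HK(\C)$ for each $\bx$. Fixing an orthonormal basis $\{\varphi_j\}_{j=1}^{d_k}$ of $\HK(\C)$ with respect to the $L^2(\Smone)$-inner product, every $f\in C(\Smone\times\Smone,\HK(\C))$ decomposes uniquely as $f(\bx,\bu)=\sum_{j=1}^{d_k}f_j(\bx)\varphi_j(\bu)$ with $f_j\in C(\Smone)$, and every such $\mu$ can equivalently be written as $d\mu(\bx,\bu)=\sum_{j=1}^{d_k}\varphi_j(\bu)\,dS(\bu)\,d\nu_j(\bx)$ for measures $\nu_j\in M(\Smone)$.

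First I would check boundedness: the estimate $\|\Lambda_\mu\|\le\|\mu\|$ is immediate from the definition of the total variation norm and Fubini's theorem. Injectivity of $\mu\mapsto\Lambda_\mu$ is obtained by testing $\Lambda_\mu$ against the functions $g(\bx)\varphi_j(\bu)$ for $g\in C(\Smone)$, which reduces the statement to injectivity of the classical Riesz map on $M(\Smone)$ applied to each coefficient measure $\nu_j$.

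For surjectivity, given $\Lambda\in C(\Smone\times\Smone,\HK(\C))^*$, I would define $\Lambda_j\in C(\Smone)^*$ by $\Lambda_j(g):=\Lambda\bigl(g(\bx)\varphi_j(\bu)\bigr)$. The classical Riesz theorem produces $\nu_j\in M(\Smone)$ representing $\Lambda_j$. I would then take a positive dominating measure $\mu_1:=\sum_{j=1}^{d_k}|\nu_j|$, the Radon--Nikodym derivatives $h_j:=d\nu_j/d\mu_1\in L^1(\mu_1)$, and set $h(\bx,\bu):=\sum_{j=1}^{d_k}h_j(\bx)\varphi_j(\bu)$, so that $h(\bx,\cdot)\in\HK(\C)$ for every $\bx$. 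The measure $d\mu:=h(\bx,\bu)\,dS(\bu)\,d\mu_1(\bx)$ then lies in $M(\Smone\times\Smone,\HK(\C))$, and $\Lambda=\Lambda_\mu$ follows from the basis expansion of $f$ combined with orthonormality of $\{\varphi_j\}$.

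The main obstacle I expect is the reverse isometry $\|\Lambda_\mu\|\ge\|\mu\|$. In the classical Riesz theorem one approximates the signum $\overline{d\mu}/|d\mu|$ by continuous functions and tests $\Lambda_\mu$ against them, but such a signum generically has no $k$-harmonic structure in $\bu$, so it is not admissible as a test function in $C(\Smone\times\Smone,\HK(\C))$. To get around this, I would exploit the reproducing property of the kernel $Z_k(\bu,\bv)$ of $\HK(\C)$: for each fixed $\bv\in\Smone$ the function $(\bx,\bu)\mapsto g(\bx)Z_k(\bu,\bv)$ belongs to $C(\Smone\times\Smone,\HK(\C))$, and integrating test functions of this form against an approximation of the signum of $\mu$ in the auxiliary $\bv$-variable should allow one to recover the full total variation $\|\mu\|$, thereby establishing the matching lower bound on $\|\Lambda_\mu\|$.
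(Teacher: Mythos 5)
Your argument up through surjectivity is correct, and it follows essentially the same route as the paper, only written out more carefully: the paper reduces the lemma to the second variable, where it shows that $\mu\mapsto\Lambda_\mu$ maps $M(\Smone,\HK(\C))$ bijectively onto $\HK(\C)^*$ by proving injectivity and counting the (equal, finite) dimensions, with the classical Riesz theorem handling the $\bx$-variable implicitly. Your coefficient functionals $\Lambda_j$, the measures $\nu_j$, and the Radon--Nikodym construction make this reduction explicit, and your reinterpretation of $M(\Smone\times\Smone,\HK(\C))$ as the linear span of the product measures is in fact necessary for the statement to parse, since the products $\mu_1\times\mu_2$ by themselves do not form a vector space.

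The genuine gap is exactly where you suspected, the reverse inequality $\|\Lambda_\mu\|\geq\|\mu\|$, and your proposed repair cannot close it. Testing against $f(\bx,\bu)=g(\bx)Z_k(\bu,\bv)$ and using the reproducing property does recover $\int_{\Smone}|h(\bx,\bv)|\,d|\mu_1|(\bx)$ for each fixed $\bv$, but the test function has $\|f\|_\infty=\sup_{\bu}|Z_k(\bu,\bv)|=Z_k(\bv,\bv)$, which (up to the normalization of $dS$) equals $\dim\HK(\C)$; integrating in $\bv$ therefore yields only $\|\mu\|\leq C_{m,k}\|\Lambda_\mu\|$ with $C_{m,k}$ of order $\dim\HK(\C)$ --- a two-sided norm equivalence, not an isometry. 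Worse, no refinement can succeed, because for $k\geq1$ and $m\geq3$ the isometry is actually false for the total variation norm: take $d\mu=h(\bu)\,dS(\bu)\,dS(\bx)$ with $h\in\HK(\C)$ real and nonzero, so $\|\mu\|=\omega_m\|h\|_{L^1(\Smone)}$. Since $\int_{\Smone}h\,dS=0$, the sets $\{h>0\}$ and $\{h<0\}$ are nonempty and open; the unit ball of $(\HK(\C),\|\cdot\|_\infty)$ is compact, so $\sup\{|\int_{\Smone}gh\,dS|:g\in\HK(\C),\ \|g\|_\infty\leq1\}$ is attained, and an extremizer realizing $\|h\|_{L^1(\Smone)}$ would have to take two distinct unimodular constant values on those two open sets, which real-analyticity of spherical harmonics on the connected sphere forbids. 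Hence $\|\Lambda_\mu\|<\|\mu\|$ strictly, and only a bicontinuous linear bijection (which is what the weak-$*$ arguments such as Theorem \ref{hp} actually require) is available. To be fair, the paper's own proof shares this defect: it asserts the map is an ``isometrical isomorphism'' but proves only bijectivity of the finite-dimensional factor by a dimension count; your write-up is the one that correctly identifies where the real difficulty sits.
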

\begin{proof}
Indeed, the difference between the isometrical isomorphism given in \eqref{iden} and the isomorphism above is the extra condition $\HK(\C)$ added to the second variable $\bu$. Therefore, to prove the lemma above, we only need to show that for the variable $\bu$,
\begin{eqnarray}\label{linearmap}
M(\Smone,\HK) &\longrightarrow& \HK(\C)^*,\nonumber\\
\mu &\mapsto& \Lambda_{\mu},\quad \bigg(\Lambda_{\mu}(f)=\int_{\Smone}fd\mu,\quad \text{for}\ f\in \HK(\C)\bigg),
\end{eqnarray}
is an isometrical isomorphism, where $M(\Smone,\HK(\C))$ stands for the space of finite complex Borel measures given by $d\mu=hdS$, and $h$ is a $k$-homogeneous harmonic polynomial. To prove this, it suffices to show the linear map \eqref{linearmap} is into, because $\dim M(\Smone,\HK(\C))=\dim \HK(\C)^*=\dim\HK(\C)$ is finite. This is equivalent to prove that if $\Lambda_{\mu}(f)=0$ for all $f\in\HK(\C)$, then $\mu=0$. Assume that $\mu=hdS$, where $h\in\HK(\C)$, then 
\be
\Lambda_{\mu}(f)=\int_{\Smone}fd\mu=\int_{\Smone}fhdS.
\ee
Since $f\in\HK(\C)$ is arbitrary, then $h=0$, which means $\mu=0$. This completes the proof for \eqref{linearmap}. Hence, our lemma is true.
\end{proof}
\par
When given a function $f$ on $\Bm\times\Bm$, the notation $f_{r_1,r_2}$ stands for the function on $\Smone\times\Smone$ defined by $f_{r_1,r_2}(\bs{\xi},\bs{\eta})=f(r_1\bs{\xi},r_2\bs{\eta})$, where $0\leq r_1,r_2<1$. Next, we introduce growth estimates for the Poisson integrals of measures.
\begin{theorem}\label{growth1}
The following estimates apply to Poisson integrals.
\begin{enumerate}
\item If $\mu=\mu_1\times\mu_2\in M(\Smone\times\Smone,\HK(\C))$. Further, let $f=P[\mu]$, then $\|f_{r_1,r_2}\|_1\leq \frac{m+2k-2}{m-2}\|\mu\|$ for all $r_1,r_2\in[0,1)$.
\item If $1\leq p\leq \infty$, $g\in L^p(\Smone\times\Smone,\HK(\C))$ and $f=P[g]$, then $\|f_{r_1,r_2}\|_p\leq \frac{m+2k-2}{m-2} \|g\|_p$ for all $r_1,r_2\in[0,1)$.
\end{enumerate}
\end{theorem}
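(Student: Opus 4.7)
The plan is to treat both parts uniformly via a Schur-type argument on the kernel $P$. The two pivotal estimates to establish are the uniform kernel bounds
\begin{align*}
\mathrm{(A)}\quad & \sup_{\bx,\bv\in\Smone,\ r_1,r_2\in[0,1)} \int_{\Smone}\!\int_{\Smone} |P(\ze,r_1\bx,\bu,r_2\bv)|\,dS(\bu)\,dS(\ze) \le \frac{m+2k-2}{m-2}, \\
\mathrm{(B)}\quad & \sup_{\ze,\bu\in\Smone,\ r_1,r_2\in[0,1)} \int_{\Smone}\!\int_{\Smone} |P(\ze,r_1\bx,\bu,r_2\bv)|\,dS(\bx)\,dS(\bv) \le \frac{m+2k-2}{m-2}.
\end{align*}
Granting (A) and (B), part (1) follows at once from Tonelli:
\[
\|f_{r_1,r_2}\|_1 \le \int_{\Smone}\!\int_{\Smone}\!\bigl(\int_{\Smone}\!\int_{\Smone} |P|\,dS(\bx)\,dS(\bv)\bigr)\,d|\mu_2|(\bu)\,d|\mu_1|(\ze) \le \tfrac{m+2k-2}{m-2}\|\mu_1\|\|\mu_2\|,
\]
which equals $\tfrac{m+2k-2}{m-2}\|\mu\|$ since $\mu=\mu_1\times\mu_2$ is a product measure.

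For part (2), the extremal cases $p=\infty$ and $p=1$ are immediate from (A) and (B) respectively: for $p=\infty$ one pulls $\|g\|_\infty$ out of $|P[g](r_1\bx,r_2\bv)|$ and applies (A) pointwise, while for $p=1$ the argument of (1) repeats verbatim with $|g|\,dS\otimes dS$ in place of $d|\mu|$. For the intermediate range $1<p<\infty$, I would use H\"older's inequality to split $|P|=|P|^{1/p}|P|^{1/p'}$ inside the Poisson integral, giving
\[
|P[g](r_1\bx,r_2\bv)|^p \le \bigg(\int_{\Smone}\!\int_{\Smone} |P|\,|g|^p\,dS\,dS\bigg)\bigg(\int_{\Smone}\!\int_{\Smone} |P|\,dS\,dS\bigg)^{p-1}.
\]
Bound (A) controls the second factor by $\bigl(\tfrac{m+2k-2}{m-2}\bigr)^{p-1}$; then integrating in $(\bx,\bv)$, applying Tonelli, and using (B) on the remaining kernel integral yields $\|f_{r_1,r_2}\|_p^p \le \bigl(\tfrac{m+2k-2}{m-2}\bigr)^{p}\|g\|_p^p$, as claimed.

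The real content lies in proving (A) and (B) with the sharp constant $\tfrac{m+2k-2}{m-2}$, and this I expect to be the main technical difficulty. Since the zonal harmonic $Z_k$ is sign-changing for $k\ge 1$, the Poisson kernel is not positive, so no reproducing identity of the form $\int P\,dS\,dS \equiv \text{const}$ can be applied directly; moreover the crude bound $|Z_k(\et,\bv)|\le d_k$ combined with the Laplace Poisson kernel's $L^1$ mass produces a constant growing with $k$. The strategy I would pursue is: substitute $\bu\mapsto\et=\tfrac{(\bx-\ze)\bu(\bx-\ze)}{|\bx-\ze|^2}$ (an orthogonal transformation preserving $dS(\bu)$), reducing the $\bu$-integration to a harmonic-reproducing computation in $\HK$, and then appeal to the volume mean value property for bosonic Laplacians (Proposition 5.3 of \cite{DTR}), whose normalization contributes exactly the factor $\tfrac{m+2k-2}{m-2}$ and is the same constant appearing in Proposition \ref{BergmanPointEv}. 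Finally, a symmetry argument exchanging $(\ze,\bu)\leftrightarrow(\bx,\bv)$ on $\Smone\times\Smone$ should reduce (A) to (B), so only one of the two estimates needs to be derived from scratch.
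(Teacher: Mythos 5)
Your reduction to the Schur-type bounds (A) and (B) is the gap: both bounds are \emph{false} for $k\geq 1$ with the constant $\frac{m+2k-2}{m-2}$, so the whole scheme collapses before the interpolation step. To see this, fix $\bx,\bv\in\Smone$ and substitute $\bu\mapsto\et=\frac{(r_1\bx-\ze)\bu(r_1\bx-\ze)}{|r_1\bx-\ze|^2}$ in the inner integral of (A); this is measure-preserving, and by the $k$-homogeneity of $Z_k$ in its second argument one computes exactly
\[
\int_{\Smone}\int_{\Smone}|P(\ze,r_1\bx,\bu,r_2\bv)|\,dS(\bu)\,dS(\ze)
=\frac{m+2k-2}{m-2}\,r_2^k\,\|Z_k(\cdot,\bv)\|_{L^1(\sigma)},
\]
using $\frac{c_{m,k}}{2}\,\omega_m=\frac{m+2k-2}{m-2}$ and the unit mass of the scalar Poisson kernel; here $\sigma$ is normalized surface measure. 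But $\|Z_k(\cdot,\bv)\|_{L^1(\sigma)}>1$ strictly for $k\geq1$: since $\|Z_k(\cdot,\bv)\|_{L^2(\sigma)}^2=Z_k(\bv,\bv)=d_k$ and $\|Z_k(\cdot,\bv)\|_\infty\leq d_k$, one gets $d_k\leq d_k\|Z_k\|_{L^1(\sigma)}$, and equality would force $|Z_k(\cdot,\bv)|\in\{0,d_k\}$ a.e., impossible for a continuous nonconstant function with mean zero. Letting $r_2\to1$, the supremum in (A) strictly exceeds $\frac{m+2k-2}{m-2}$ (and in fact grows with $k$); the same computation kills (B). The root cause is structural: once you put absolute values on the kernel you destroy the cancellation in the sign-changing factor $Z_k$, which is precisely what the hypotheses $d\mu_2=h\,dS$ with $h\in\HK(\C)$ and $g(\ze,\cdot)\in\HK(\C)$ allow you to exploit. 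A Schur test indifferent to the $\HK(\C)$-valued structure cannot give the sharp constant, because the theorem with that constant is false for general scalar data.

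The paper's proof keeps the structure intact: it first carries out the $\bu$-integration \emph{exactly}, using $Z_k(\bs{a}\bu\bs{a},\bv)=Z_k(\bu,\bs{a}\bv\bs{a})$ together with the reproducing property of $Z_k$ on $\HK(\C)$, which collapses $f_{r_1,r_2}$ to a scalar Poisson integral,
\[
f_{r_1,r_2}(\bs{\xi},\bs{\eta})=\omega_m^{-1}\int_{\Smone}\frac{1-r_1^2}{|r_1\bs{\xi}-\ze|^m}\,h(\phi(\bs{\eta}))\,d\mu_1(\ze),
\qquad
\phi(\bs{\eta})=\frac{(r_1\bs{\xi}-\ze)\,r_2\bs{\eta}\,(r_1\bs{\xi}-\ze)}{|r_1\bs{\xi}-\ze|^2},
\]
and only \emph{then} takes absolute values. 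At that point the remaining kernel is the positive classical Poisson kernel of unit mass, $\phi(\bs{\eta})$ is a reflection of $r_2\bs{\eta}$ so $dS(\bs{\eta})$ is invariant, and the monotonicity of $r\mapsto\int_{\Smone}|h(r\bs{\eta})|\,dS(\bs{\eta})$ for harmonic $h$ (\cite[Corollary 6.6]{Axler}) yields part (1); part (2) follows the same route with Jensen's inequality applied to the normalized scalar Poisson kernel (the $p=1$ and $p=\infty$ cases being as you describe). Note also that the constant $\frac{m+2k-2}{m-2}$ is nothing but the normalization $\frac{c_{m,k}}{2}\omega_m$ of the bosonic Poisson kernel; it is \emph{not} the volume mean-value constant of \cite[Proposition 5.3]{DTR}, which enters the Bergman point-evaluation estimate instead, so that appeal in your last paragraph is misplaced. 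Your closing instincts — the measure-preserving reflection substitution and the reproducing computation in $\HK(\C)$ — are the correct ingredients, but they must be applied to the pairing of $P$ with the $\HK(\C)$-valued data, not to $|P|$.
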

\begin{remark}
One can easily check that when $k=0$, the results above reduce to the properties of Laplacian $\Delta_{\bx}$.
\end{remark}
\begin{proof}
To prove $(1)$, firstly, we have
\be
&&f_{r_1,r_2}(\bs{\xi},\bs{\eta})=f(r_1\bs{\xi},r_2\bs{\eta})=P[\mu](r_1\bs{\xi},r_2\bs{\eta})\\
&=&\frac{c_{m,k}}{2}\int_{\Smone}\int_{\Smone}\frac{1-|r_1\bs{\xi}|^2}{|r_1\bs{\xi}-\ze|^m}Z_k\bigg(\frac{(r_1\bs{\xi}-\ze)\bs{u}(r_1\bs{\xi}-\ze)}{|r_1\bs{\xi}-\ze|^2},r_2\et\bigg)h(\bu)dS(\bu)d\mu_1(\ze)\\
&=&\frac{c_{m,k}}{2}\int_{\Smone}\int_{\Smone}\frac{1-|r_1\bs{\xi}|^2}{|r_1\bs{\xi}-\ze|^m}Z_k\big(\bu,\phi(\bs{\eta})\big)h(\bu)dS(\bu)d\mu_1(\ze)\\
&=&\omega_m^{-1}\int_{\Smone}\frac{1-|r_1\bs{\xi}|^2}{|r_1\bs{\xi}-\ze|^m}h(\phi(\bs{\eta}))d\mu_1(\ze),
\ee
where
$
\phi(\bs{\eta})=\frac{(r_1\bs{\xi}-\ze)r_2\et(r_1\bs{\xi}-\ze)}{|r_1\bs{\xi}-\ze|^2},
$
and $\omega_m$ is the area of $\Smone$. The last equation above is obtained from the facts that $Z_k(\bs{a}\bu \bs{a},\bv)=Z_k(\bu,\bs{a}\bv \bs{a})$ with $\bs{a}\in\R^m$ and $Z_k$ is the reproducing kernel of $k$-homogeneous harmonic polynomials and  \cite[Lemma 3.6]{DTR}.
\par
Now, we have
\be
\|f_{r_1,r_2}\|_{1}&=&\frac{c_{m,k}}{2}\int_{\Smone}\int_{\Smone}\bigg\vert\int_{\Smone}\frac{1-|r_1\bs{\xi}|^2}{|r_1\bs{\xi}-\ze|^m}h(\phi(\bs{\eta}))d\mu_1(\ze)\bigg\vert dS(\bs{\xi})dS(\bs{\eta})\\
&\leq&\frac{c_{m,k}}{2}\int_{\Smone}\int_{\Smone}\frac{1-|r_1\bs{\xi}|^2}{|r_1\bs{\xi}-\ze|^m}\int_{\Smone}|h(\phi(\bs{\eta}))|dS(\bs{\eta})dS(\bs{\xi})d|\mu_1|(\ze).
\ee
Further, noticing that $\phi(\bs{\eta})$ is a reflection of $r_2\bs{\eta}$, hence, the integral above becomes
\begin{align*}
&\frac{c_{m,k}}{2}\int_{\Smone}\int_{\Smone}\frac{1-|r_1\bs{\xi}|^2}{|r_1\bs{\xi}-\ze|^m}\int_{\Smone}|h(r_2\bs{\eta})|dS(\bs{\eta})dS(\bs{\xi})d|\mu_1|(\ze)\\
&=\frac{m+2k-2}{m-2}\int_{\Smone}\int_{\Smone}|h(r_2\bs{\eta})|dS(\bs{\eta})d|\mu_1|(\ze)
\leq \frac{m+2k-2}{m-2}\int_{\Smone}\int_{\Smone}|h(\bs{\eta})|dS(\bs{\eta})d|\mu_1|(\ze)\\
&=\frac{m+2k-2}{m-2}\int_{\Smone}\int_{\Smone}d|\mu_2|(\bs{\eta})d|\mu_1|(\ze)\leq\frac{m+2k-2}{m-2}\|\mu\|,
\end{align*}
where the following facts are used in the last two steps above
\be
\omega_m^{-1}\int_{\Smone}\frac{1-|r_1\bs{\xi}|^2}{|r_1\bs{\xi}-\ze|^m}dS(\bs{\xi})=\omega_m^{-1}\int_{\Smone}\frac{1-|r_1|^2}{|\bs{\xi}-r_1\ze|^m}dS(\bs{\xi})=1,
\ee
and 
$
\int_{\Smone}|h(r_2\bs{\eta})|dS(\bs{\eta})
$
 is increasing with respect to $r_2$, since $h$ is harmonic, see \cite[Corollary 6.6]{Axler}.
\par
For $(2)$, we firstly assume that $1\leq p<\infty$, with similar argument applied to $(1)$, we obtain
\begin{align*}
|f_{r_1,r_2}(\bs{\xi},\bs{\eta})|
\leq\frac{c_{m,k}}{2}\int_{\Smone}\frac{1-|r_1\bs{\xi}|^2}{|r_1\bs{\xi}-\ze|^m}|g(\bs{\zeta},\phi(\bs{\eta}))|dS(\ze).
\end{align*}
Then, with the Jensen's integral inequality, we have
\begin{align*}
&\bigg(\int_{\Smone}\int_{\Smone}|f_{r_1,r_2}(\bs{\xi},\bs{\eta})|^pdS(\bs{\xi})dS(\bs{\eta})\bigg)^{1/p}\\
\leq&\frac{c_{m,k}}{2}\bigg(\int_{\Smone}\int_{\Smone}\int_{\Smone}\frac{1-|r_1\bs{\xi}|^2}{|r_1\bs{\xi}-\ze|^m}|g(\bs{\zeta},\phi(\bs{\eta}))|^pdS(\ze)dS(\bs{\xi})dS(\bs{\eta})\bigg)^{1/p}\\
=&\frac{c_{m,k}}{2}\bigg(\int_{\Smone}\int_{\Smone}\int_{\Smone}\frac{1-|r_1\bs{\xi}|^2}{|r_1\bs{\xi}-\ze|^m}|g(\bs{\zeta},\bs{\eta})|^pdS(\bs{\eta})dS(\ze)dS(\bs{\xi})\bigg)^{1/p}\\
=&\frac{m+2k-2}{m-2}\bigg(\int_{\Smone}\int_{\Smone}|g(\bs{\zeta},\bs{\eta})|^pdS(\bs{\eta})dS(\ze)\bigg)^{1/p}
=\frac{m+2k-2}{m-2}\|g\|_p.
\end{align*}
Similar argument as above can be applied to $p=\infty$.
\end{proof}
An immediately consequence of  the theorem above is the following.
\begin{corollary}
Let $1\leq p<\infty$, $g\in L^p(\Smone\times\Smone,\HK(\C))$ and $f=P[g]$. Then,
\be
\|f_{r_1,r_2}\|_p\leq \frac{m+2k-2}{m-2}\|g_{s_1,s_2}\|_p
\ee
for all $0\leq r_j\leq s_j\leq1$, $j=1,2$.
\end{corollary}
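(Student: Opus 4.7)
The plan is a dilation argument that reduces the corollary to Theorem \ref{growth1}(2). The key device is to introduce, for $s_1,s_2\in(0,1]$, the rescaled function
\begin{equation*}
F(\bx,\bu):=f(s_1\bx,s_2\bu),\qquad (\bx,\bu)\in\Bm\times\Bm.
\end{equation*}
Since dilation in $\bu$ preserves both $k$-homogeneity and harmonicity, $F(\bx,\cdot)\in\HK(\C)$ for each fixed $\bx$, so $F$ lives in the correct function class. My first step is to verify that $\Dtwo F=0$ on $\Bm\times\Bm$ by a direct chain-rule computation on each of the three summands in the definition of $\Dtwo$: the $\Delta_{\bx}$ term manifestly contributes a factor $s_1^2$; the cross term $\langle\bu,D_{\bx}\rangle\langle D_{\bu},D_{\bx}\rangle$ picks up $s_1\cdot s_1 s_2$ from the derivatives and an offsetting $s_2^{-1}$ upon trading the outer $\bu$ for $s_2\bu$, again giving $s_1^2$; and $|\bu|^2\langle D_{\bu},D_{\bx}\rangle^2$ contributes $s_1^2 s_2^2$ from the derivatives and $s_2^{-2}$ from the explicit $|\bu|^2$. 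Combining, $\Dtwo F(\bx,\bu)=s_1^2(\Dtwo f)(s_1\bx,s_2\bu)=0$.

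Next, I would identify $F$ as a Poisson integral. Assume first that $s_1,s_2<1$. Then $(s_1\bx,s_2\bu)$ stays in the compact set $s_1\overline{\Bm}\times s_2\overline{\Bm}\subset\Bm\times\Bm$, on which $f$ is smooth by Theorem \ref{analytic}; consequently $F$ extends continuously to $\overline{\Bm}\times\overline{\Bm}$, with boundary values on $\Smone\times\Smone$ given exactly by $f_{s_1,s_2}=g_{s_1,s_2}$. The uniqueness of the Dirichlet problem for $\Dtwo$ in the unit ball (\cite[Section 4]{DTR}) then forces
\begin{equation*}
F=P[g_{s_1,s_2}]\quad\text{on }\Bm\times\Bm.
\end{equation*}

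The proof now concludes by applying Theorem \ref{growth1}(2) to $F$ at the radii $r_1':=r_1/s_1$ and $r_2':=r_2/s_2$, which lie in $[0,1)$ by the hypothesis $r_j\leq s_j$. That yields $\|F_{r_1',r_2'}\|_p\leq\frac{m+2k-2}{m-2}\|g_{s_1,s_2}\|_p$, and a direct substitution gives $F_{r_1',r_2'}(\bs{\xi},\bs{\eta})=f(r_1\bs{\xi},r_2\bs{\eta})=f_{r_1,r_2}(\bs{\xi},\bs{\eta})$, which is the stated inequality. The boundary case $s_1=s_2=1$ is precisely Theorem \ref{growth1}(2) with $g_{1,1}=g$, and cases in which only one $s_j$ equals $1$ are treated identically after a one-variable rescaling.

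The main obstacle is the first step: the asymmetric appearance of $\bx$ and $\bu$ in $\Dtwo$ means that the scaling computation threatens to leave residual powers of $s_2$ in the cross and $|\bu|^2$ terms, and only a careful accounting of how $s_2^{\pm 1}$ factors from $\bu$-differentiations, from $|\bu|^2$, and from the outer $\langle\bu,D_{\bx}\rangle$ cancel shows that both non-Laplacian summands scale by the same $s_1^2$ as $\Delta_{\bx}$. Once this uniform scaling is established, the rest of the argument is a routine reduction to Theorem \ref{growth1}(2).
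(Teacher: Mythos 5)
Your proposal is correct and takes essentially the same route as the paper: the paper's one-line proof, $\|f_{r_1,r_2}\|_p=\|P[g_{s_1,s_2}]_{\frac{r_1}{s_1},\frac{r_2}{s_2}}\|_p\le\frac{m+2k-2}{m-2}\|g_{s_1,s_2}\|_p$, rests on exactly the identification $f(s_1\cdot,s_2\cdot)=P[g_{s_1,s_2}]$ that you verify in detail (dilation invariance of $\ker\Dtwo$, which in the second variable is immediate from $k$-homogeneity since $f(\bx,s_2\bu)=s_2^kf(\bx,\bu)$, plus Dirichlet uniqueness) before invoking Theorem \ref{growth1}(2). The only cosmetic slip is that when $r_j=s_j$ the ratio $r_j/s_j=1$ lies outside the range $[0,1)$ of Theorem \ref{growth1}(2), but that case is trivial because the constant $\frac{m+2k-2}{m-2}$ is at least $1$.
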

\begin{proof}
This can be obtained immediately from the previous theorem and \cite[Theorem 3.7]{DTR} as follows
\be
\|f_{r_1,r_2}\|_p=\|P[g_{s_1,s_2}]_{\frac{r_1}{s_1},\frac{r_2}{s_2}}\|_p\leq\frac{m+2k-2}{m-2}\|g_{s_1,s_2}\|_p.
\ee
\end{proof}
Recall that \cite[Theorem 3.7]{DTR} tells us that if $g\in C^2(\Smone\times\Bm,\HK(\C))$, and $f=P[g]$, then $f_{r,1}\rightarrow g$ in $C(\Smone\times\Bm)$ as $r\rightarrow 1$. Actually, with the previous theorem, we have a generalized result of \cite[Theorem 3.10]{DTR} as follows.
\begin{corollary}\label{corlp}
Suppose $1\leq p<\infty$. If $g\in L^p(\Smone\times\Smone,\HK(\C))$ and $f=P[g]$, then we have $\|f_{r_1,r_2}-g\|_p\rightarrow 0$ as $r_1,r_2\rightarrow 1$.
\end{corollary}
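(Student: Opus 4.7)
The strategy is the classical approximation argument, using (i) Theorem~\ref{growth1}(2) as a uniform $L^p$-bound on the Poisson operator, (ii) the already-known convergence result \cite[Theorem~3.7]{DTR} for continuous boundary data, and (iii) density of $C(\Smone\times\Smone,\HK(\C))$ in $L^p(\Smone\times\Smone,\HK(\C))$. The density is the first thing to pin down: writing $g=\sum_{j=1}^{d_k}g_j(\bx)\varphi_j(\bu)$ for an orthonormal basis $\{\varphi_j\}$ of $\HK(\C)$, one has $g_j\in L^p(\Smone)$, so one can approximate each $g_j$ by $\tilde g_j\in C(\Smone)$ and assemble $\tilde g=\sum \tilde g_j(\bx)\varphi_j(\bu)\in C(\Smone\times\Smone,\HK(\C))$ with $\|g-\tilde g\|_p<\varepsilon$.

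Given this, I would fix $\varepsilon>0$, pick such a $\tilde g$, set $\tilde f=P[\tilde g]$, and use the triangle inequality
\begin{align*}
\|f_{r_1,r_2}-g\|_p\le \|f_{r_1,r_2}-\tilde f_{r_1,r_2}\|_p+\|\tilde f_{r_1,r_2}-\tilde g\|_p+\|\tilde g-g\|_p.
\end{align*}
The third term is already $<\varepsilon$. For the first, observe $f-\tilde f=P[g-\tilde g]$ by linearity of the Poisson integral, and then invoke Theorem~\ref{growth1}(2) to obtain
\begin{align*}
\|f_{r_1,r_2}-\tilde f_{r_1,r_2}\|_p\le \frac{m+2k-2}{m-2}\|g-\tilde g\|_p<\frac{m+2k-2}{m-2}\varepsilon.
\end{align*}

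For the middle term I would invoke \cite[Theorem~3.7]{DTR}, which gives uniform convergence $\tilde f_{r_1,r_2}\to\tilde g$ on $\Smone\times\Smone$ as $r_1,r_2\to 1$ because $\tilde g$ is continuous; since $\Smone\times\Smone$ has finite surface measure, uniform convergence upgrades to $L^p$-convergence, so this term drops below $\varepsilon$ whenever $r_1,r_2$ are sufficiently close to $1$. Combining the three estimates yields $\|f_{r_1,r_2}-g\|_p\le (2+\frac{m+2k-2}{m-2})\varepsilon$ for $r_1,r_2$ near $1$, which finishes the proof as $\varepsilon$ was arbitrary.

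The only delicate point is ensuring that \cite[Theorem~3.7]{DTR} is formulated in a way that directly delivers uniform convergence on $\Smone\times\Smone$ (both arguments); if it is stated only for convergence in the first variable with the second on $\Bm$, one must additionally note that the reflection $\phi(\bs\eta)$ appearing in the explicit form of the Poisson integral (see the computation in the proof of Theorem~\ref{growth1}(1)) depends continuously on $(\bs\xi,\ze,\bs\eta)$, so the integrand is jointly continuous and the standard dominated-convergence/uniform-continuity argument on the compact set $\Smone\times\Smone\times\Smone$ applies. This is the main technical point; the rest of the proof is a routine three-$\varepsilon$ argument.
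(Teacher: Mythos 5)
Your proof is correct and follows essentially the same route as the paper: a three-$\varepsilon$ argument approximating $g\in L^p(\Smone\times\Smone,\HK(\C))$ by a continuous function, controlling $\|P[g-\tilde g]_{r_1,r_2}\|_p$ via Theorem \ref{growth1}(2), and handling the continuous boundary data via \cite[Theorem 3.7]{DTR} (the paper additionally cites \cite[Theorem 6.7]{Axler} at that step). Your extra remarks---verifying density through the expansion $g=\sum_{j=1}^{d_k}g_j(\bx)\varphi_j(\bu)$ and flagging the joint uniform convergence in both variables---merely make explicit details the paper leaves implicit.
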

\begin{proof}
Suppose $1\leq p<\infty$ and a function $g\in L^p(\Smone\times\Smone,\HK(\C))$, let $f=P[g]$. For a fixed $\epsilon>0$, there exists a function $h\in C(\Smone\times\Smone,\HK(\C))$, such that $\|g-h\|_p<\epsilon$. Let $f^{\dagger}=P[h]$, then
$
\|f_{r_1,r_2}-g\|_p\leq\|f_{r_1,r_2}-f^{\dagger}_{r_1,r_2}\|_p+\|f^{\dagger}_{r_1,r_2}-h\|_p+\|h-g\|_p.
$
According to Theorem \ref{growth1}, we observe that 
\be
\|f_{r_1,r_2}-f^{\dagger}_{r_1,r_2}\|_p=\|P[g-h]_{r_1,r_2}\|_p\leq\frac{m+2k-2}{m-2}\|g-h\|_p\leq\frac{m+2k-2}{m-2}\epsilon.
\ee
Further, with \cite[Theorem 3.7]{DTR} and \cite[Theorem 6.7]{Axler}, we obtain $\|f^{\dagger}_{r_1,r_2}-h\|_p\leq \epsilon$ when $r_1,r_2\rightarrow 0$. Therefore, we have 
$
\|f_{r_1,r_2}-f^{\dagger}_{r_1,r_2}\|_p\leq \big(\frac{m+2k-2}{m-2}+2\big)\epsilon.
$
Since $\epsilon$ is arbitrary, we complete the proof.
\end{proof}

\begin{theorem}\label{weak}
Poisson integrals also have the following weak$^*$ convergence properties.
\begin{enumerate}
\item If $\mu=\mu_1\times\mu_2\in M(\Smone\times\Smone,\HK(\C))$ and $f=P[\mu]$, then $f_{r_1,r_2}\rightarrow \mu$ weak$^*$ in $M(\Smone\times\Smone,\HK(\C))$ as $r_1,r_2\rightarrow 1$.
\item Let $1\leq p\leq\infty$, if $g\in L^{p}(\Smone\times\Smone,\HK(\C))$ and $f=P[g]$, then $f_{r_1,r_2}\rightarrow g$ weak$^*$ in $L^{p}(\Smone\times\Smone,\HK(\C))$ as $r_1,r_2\rightarrow 1$.
\end{enumerate}
\end{theorem}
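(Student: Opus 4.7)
The plan is to split the proof into three regimes of increasing difficulty. Part (2) for $1 \leq p < \infty$ follows immediately from Corollary \ref{corlp}: the norm convergence $\|f_{r_1,r_2} - g\|_p \to 0$ already implies weak (resp.\ weak$^*$) convergence against any test function in $L^{p'}(\Smone\times\Smone, \HK(\C))$. The remaining two assertions require more work.

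For Part (1) the crucial input is the uniform bound $\|f_{r_1,r_2}\|_1 \leq \tfrac{m+2k-2}{m-2}\|\mu\|$ from Theorem \ref{growth1}(1), which makes the family of functionals $\phi \mapsto \int \phi \, f_{r_1,r_2}\,dS\,dS$ equicontinuous on $C(\Smone\times\Smone, \HK(\C))$. A standard $\varepsilon/3$-argument then reduces verification to a dense subset. To identify such a subset I would combine Stone-Weierstrass in the variable $\bs{\xi}$ with a basis $\{\varphi_j\}$ of $\HK$ to see that $\mathcal{P}(\Smone\times\Smone, \HK(\Smone))$ is sup-norm dense, and then apply Theorem \ref{decomp} to rewrite each such polynomial (on the sphere, where $|\bs{\xi}|=1$) as a finite sum of boundary restrictions of elements of $\mathcal{B}_L(\R^m\times\Bm, \HK(\C))$. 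By linearity everything then reduces to a single test function $\phi \in \mathcal{B}_L(\Smone\times\Smone, \HK(\Smone))$.

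For such a $\phi$ the computation proceeds as follows. First I apply Fubini (justified because, for fixed $r_1, r_2 < 1$, the kernel $P(\ze, r_1\bs{\xi}, \bu, r_2\bs{\eta})$ is jointly continuous on $\Smone^4$), then I expand the Poisson kernel using Proposition \ref{PJ} and pull out the homogeneity $J_{l,k}(\ze, r_1\bs{\xi}, \bu, r_2\bs{\eta}) = r_1^l r_2^k J_{l,k}(\ze, \bs{\xi}, \bu, \bs{\eta})$. The sup-estimate $|J_{l,k}| \leq \dim\BLK = O(l^{m-2})$ on $\Smone^4$ from Proposition \ref{propR}(5), together with $r_1 < 1$, delivers absolute convergence and legitimizes a second swap of sum and integral against the bounded polynomial $\phi$. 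The symmetry $J_{l,k}(\ze,\bs{\xi},\bu,\bs{\eta}) = J_{l,k}(\bs{\xi},\ze,\bs{\eta},\bu)$ from Proposition \ref{propR}(2), combined with Theorem \ref{thmortho} and the reproducing property \eqref{JLK}, kills all terms except $l=L$ and leaves $r_1^L r_2^k \phi(\ze,\bu)$. Integrating against $d\mu$ yields $r_1^L r_2^k \int \phi\,d\mu \to \int\phi\,d\mu$ as $r_1, r_2 \to 1$.

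Finally, Part (2) for $p=\infty$ reduces to Part (1): the measure $g\,dS\,dS$ lies in $M(\Smone\times\Smone, \HK(\C))$, so Part (1) provides convergence against every continuous test function; one then extends to arbitrary $L^1$ test functions using the uniform $L^\infty$ bound from Theorem \ref{growth1}(2) and density of $C$ in $L^1$. The step I expect to be the most delicate is the interchange of summation and integration in the Poisson expansion, since Proposition \ref{PJ} only guarantees uniform convergence on a small ball about $0$; the workaround is to exploit the sphere sup-estimate for $|J_{l,k}|$ directly together with the extracted factor $r_1^l r_2^k$, producing a dominating series of the form $\sum r_1^l l^{m-2}$ that converges for any $r_1 < 1$.
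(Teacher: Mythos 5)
Your proposal is correct, but for part (1) it takes a genuinely different route from the paper. The paper's proof of (1) is a duality trick exploiting the self-adjointness of the Poisson operator: after Fubini it uses $|r_1\bs{\xi}-\ze|=|\bs{\xi}-r_1\ze|$ for points on the sphere together with the reflection identity $Z_k(\bs{a}\bu\bs{a},\bv)=Z_k(\bu,\bs{a}\bv\bs{a})$ to move the kernel off the measure and onto the continuous test function $g$, so the pairing becomes $\int_{\Smone}\int_{\Smone}P[g]_{r_1,r_2}\,d\mu_2\,d\mu_1$, and then it concludes from the uniform convergence $P[g]_{r_1,r_2}\to g$ for continuous data (\cite[Theorem 3.7]{DTR}). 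You instead run a spherical-expansion argument: the uniform bound of Theorem \ref{growth1}(1) gives equicontinuity and reduces matters to a dense class of test functions, Stone--Weierstrass plus Theorem \ref{decomp} reduces further to a single $\phi\in\mathcal{B}_L(\Smone\times\Smone,\HK(\Smone))$, and then Proposition \ref{PJ}, homogeneity of $J_{l,k}$, the dimension bound of Proposition \ref{propR}(5), the symmetry \ref{propR}(2), the reproducing identity \eqref{JLK} and the orthogonality of Theorem \ref{thmortho} collapse the pairing to exactly $r_1^{L}r_2^{k}\int\phi\,d\mu$. Both arguments are sound; the paper's is shorter and avoids all series manipulations, while yours is more quantitative (it exhibits mode-wise convergence with explicit decay $r_1^{L}r_2^{k}$ on each homogeneous component) and stays entirely inside the kernel machinery of Section 3 --- your dominating series $\sum_l l^{m-2}r_1^{l}$ is the same device the paper itself uses in Proposition \ref{PropConverge}, so the convergence concern you flag is genuinely handled. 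Your treatment of $p=\infty$ also deviates: the paper argues directly from Corollary \ref{corlp} (an $L^1$-convergence claim for the product $P[g]_{r_1,r_2}h$ that implicitly needs the uniform $L^\infty$ bound, stated rather compressedly), whereas you route through part (1) plus an $\varepsilon/3$ density argument, which is arguably cleaner; the one small step you should make explicit is that for $g\in L^{\infty}(\Smone\times\Smone,\HK(\C))$ the measure $g\,dS\,dS$ need not itself be a product measure, but writing $g=\sum_{j=1}^{d_k}g_j(\bs{\xi})\varphi_j(\bu)$ exhibits it as a finite sum of elements of $M(\Smone\times\Smone,\HK(\C))$, after which part (1) applies by linearity.
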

\begin{proof}
Recall that $M(\Smone\times\Smone,\HK(\C))=C(\Smone\times\Smone,\HK(\C))^*$, let $f=P[\mu]$ with $\mu=\mu_1\times\mu_2\in M(\Smone\times\Smone,\HK(\C))$. To prove $(1)$, we only need to show that for all $g\in C(\Smone\times\Smone,\HK(\C))$,
\be
\lim_{r_1,r_2\rightarrow 1}\int_{\Smone}\int_{\Smone}f_{r_1,r_2}(\bs{\xi},\bs{\eta})g(\bs{\xi},\bs{\eta})dS(\bs{\xi})dS(\bs{\eta})= \int_{\Smone}\int_{\Smone}g(\ze,\bu)d\mu_1(\ze)d\mu_2(\bu).
\ee
Now, we look at the integral on the left side, with a similar argument as in $(1)$ in Theorem \ref{growth1},
\be
&&\lim_{r_1,r_2\rightarrow 1}\int_{\Smone}\int_{\Smone}f_{r_1,r_2}(\bs{\xi},\bs{\eta})g(\bs{\xi},\bs{\eta})dS(\bs{\xi})dS(\bs{\eta})\\
&=&\lim_{r_1,r_2\rightarrow 1}\int_{\Smone}\int_{\Smone}\int_{\Smone}\int_{\Smone}P(\ze,r_1\bs{\xi},\bs{u},r_2\et)d\mu_2(\bu)d\mu_1(\ze)g(\bs{\xi},\bs{\eta})dS(\bs{\xi})dS(\bs{\eta}).
\ee
Recall that
$
P(\ze,r_1\bs{\xi},\bs{u},r_2\et)=\frac{c_{m,k}}{2}\frac{1-r_1^2}{|r_1\bs{\xi}-\ze|^m}Z_k\big(\frac{(r_1\bs{\xi}-\ze)\bs{u}(r_1\bs{\xi}-\ze)}{|r_1\bs{\xi}-\ze|^2},r_2\et\big).
$
Since $Z_k(\bu,\bv)$ is a homogeneous polynomial, when $r_1,r_2\rightarrow 1$, the singularities only happen in $\frac{1-r_1^2}{|r_1\bs{\xi}-\ze|^m}$. Therefore, we can let $r_1=r_2=1$ in $Z_k,\ \bs{\omega}$ above. Further, since $\ze,\bs{\eta}\in\Smone$, we have
$
\frac{1-r_1^2}{|r_1\bs{\xi}-\ze|^m}=\frac{1-r_1^2}{|\bs{\xi}-r_1\ze|^m}.
$
Hence,
the integral above becomes
\begin{align*}
&\lim_{r_1,r_2\rightarrow 1}\int_{\Smone}\int_{\Smone}\int_{\Smone}\int_{\Smone}P(r_1\ze,\bs{\xi},\bs{u},r_2\et)g(\bs{\xi},\bs{\eta})dS(\bs{\xi})dS(\bs{\eta})d\mu_2(\bu)d\mu_1(\ze)\\
=&\lim_{r_1\rightarrow 1}\int_{\Smone}\int_{\Smone}\int_{\Smone}\int_{\Smone}P(r_1\ze,\bs{\xi},\bs{u},\et)g(\bs{\xi},\bs{\eta})dS(\bs{\xi})dS(\bs{\eta})d\mu_2(\bu)d\mu_1(\ze)\\
=&\lim_{r_1\rightarrow 1}\int_{\Smone}\int_{\Smone}P[g]_{r_1,1}(\ze,\bu)d\mu_2(\bu)d\mu_1(\ze)\\
=&\lim_{r_1,r_2\rightarrow 1}\int_{\Smone}\int_{\Smone}P[g]_{r_1,r_2}(\ze,\bu)d\mu_2(\bu)d\mu_1(\ze)\\
=&\int_{\Smone}\int_{\Smone}g(\ze,\bu)d\mu_2(\bu)d\mu_1(\ze),
\end{align*}
which completes the proof of $(1)$.
\par
The proof of $(2)$ is similar as in $(1)$. Firstly, we prove the case $1< p<\infty$. We notice that $L^p(\Smone\times\Smone,\HK(\C))^*=L^q(\Smone\times\Smone,\HK(\C))$, where $q$ is the conjugate number of $p$, i.e., $\frac{1}{p}+\frac{1}{q}=1$. Let $g\in L^p(\Smone\times\Smone,\HK(\C))$ and $f=P[g]$, we need to show that for all $h\in L^q(\Smone\times\Smone,\HK(\C))$, 
\be
\lim_{r_1,r_2\rightarrow 1}\int_{\Smone}\int_{\Smone}\big(f_{r_1,r_2}(\bs{\xi},\bs{\eta})-g(\bs{\xi},\bs{\eta})\big)h(\bs{\xi},\bs{\eta})dS(\bs{\xi})dS(\bs{\eta})= 0.
\ee
According to H\"older's inequality, the integral above can be estimated by
\be
\leq\|f_{r_1,r_2}-g\|_p\cdot\|h\|_q\longrightarrow 0,\quad \text{as}\ r_1,r_2\to 1,
\ee
where Corollary \ref{corlp} and $h\in L^q(\Smone\times\Smone,\HK(\C))$ are applied above. This proves $(2)$ when $1<p<\infty$. The argument is the same for the case $p=1$ except that we do not need to use H\"older's inequality.
\par
For $p=\infty$, we notice that $L^1(\Smone\times\Smone,\HK(\C))^*=L^{\infty}(\Smone\times\Smone,\HK(\C))$. With $g\in L^{\infty}(\Smone\times\Smone,\HK(\C))$ and $f=P[g]$, we need to show that for all $h\in L^1(\Smone\times\Smone,\HK(\C))$,
\be
\lim_{r_1,r_2\rightarrow 1}\int_{\Smone}\int_{\Smone}f_{r_1,r_2}(\bs{\xi},\bs{\eta})h(\bs{\xi},\bs{\eta})dS(\bs{\xi})dS(\bs{\eta})= \int_{\Smone}\int_{\Smone}g(\bs{\xi},\bs{\eta})h(\bs{\xi},\bs{\eta})dS(\bs{\xi})dS(\bs{\eta}).
\ee
With a similar argument as in $(1)$, the left hand side of the equation above becomes
\be
\lim_{r_1,r_2\rightarrow 1}\int_{\Smone}\int_{\Smone}P[g]_{r_1,r_2}(\ze,\bu)h(\ze,\bu)dS(\bu)dS(\ze).
\ee
The Corollary \ref{corlp} tells us that $P[g]_{r_1,r_2}(\ze,\bu)$ converges to $g(\ze,\bu)$ in $L^1(\Smone\times\Smone,\HK(\C))$ as $r_1,r_2\rightarrow 1$, since $g\in L^{\infty}(\Smone\times\Smone,\HK(\C))$, we also have that $P[g]_{r_1,r_2}(\ze,\bu)h(\ze,\bu)$ converges to $g(\ze,\bu)h(\ze,\bu)$ in $L^1(\Smone\times\Smone,\HK(\C))$ as $r_1,r_2\rightarrow 1$. This completes the proof of the theorem.
\end{proof}
Recall that in \cite{DTR}, for a function $f\in L^p(\Bm\times\Bm,\HK(\C))$ with $1\leq p<\infty$, we define the norm
\be
\|f\|_{L^p(\Bm\times\Bm,\HK(\C))}:=\bigg(\int_{\Bm}\int_{\Smone}|f(\bx,\bu)|^pdS(\bu)d\bx\bigg)^{1/p}.
\ee
We can also obtain a growth estimate of the $L^p$-norm of the Poisson integral of the measure space $M(\Smone\times\Smone,\HK(\C))$.
\begin{proposition}\label{PropPu}
Let $\mu\in M(\Smone\times\Smone,\HK(\C))$ and $1\leq p<\frac{m}{m-1}$. Then, there exists a constant $\lambda_{m,k}$, which only depends on $m$ and $k$, such that
\be
\|P[\mu]\|_{L^p(\Bm\times\Bm,\HK(\C))}\leq \lambda_{m,k}\|\mu\|.
\ee
\end{proposition}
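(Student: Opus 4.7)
The plan is to bound the Poisson kernel pointwise by a scalar multiple of the classical harmonic Poisson kernel, apply Minkowski's integral inequality to pull the $L^p$ norm inside the integration against $|\mu|$, and then invoke the classical fact that the harmonic Poisson kernel is uniformly in $L^p(\Bm)$ for $p < m/(m-1)$.

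First I would estimate the integrand. For $\ze, \bu \in \Smone$ and $\bx, \bv \in \Bm$, set $\et' = \frac{(\bx-\ze)\bu(\bx-\ze)}{|\bx-\ze|^2}$. Since $\et'$ is a reflection of $\bu$, we have $|\et'| = 1$, and because $Z_k(\cdot,\cdot)$ is a $k$-homogeneous harmonic polynomial in each variable, there is a constant $C_{m,k}$ (e.g.\ $C_{m,k} = \tfrac{c_{m,k}}{2}\dim\HK(\C)$) such that $|Z_k(\et',\bv)| \leq C_{m,k} |\bv|^k \leq C_{m,k}$. Therefore
\[
|P(\ze,\bx,\bu,\bv)| \;\leq\; C_{m,k}\,\frac{1-|\bx|^2}{|\bx-\ze|^m},
\]
uniformly in $\bu \in \Smone$ and $\bv \in \overline{\Bm}$.

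Next I would invoke Minkowski's integral inequality. Writing $f = P[\mu]$ and using $|f(\bx,\bv)| \leq \int_{\Smone}\int_{\Smone}|P(\ze,\bx,\bu,\bv)|\,d|\mu_2|(\bu)\,d|\mu_1|(\ze)$, we obtain
\begin{align*}
\|f\|_{L^p(\Bm\times\Bm,\HK(\C))}
&\leq \int_{\Smone}\int_{\Smone}\left(\int_{\Bm}\int_{\Smone}|P(\ze,\bx,\bu,\bv)|^p\,dS(\bv)\,d\bx\right)^{\!1/p}\,d|\mu_2|(\bu)\,d|\mu_1|(\ze) \\
&\leq C_{m,k}\,\omega_m^{1/p}\,\sup_{\ze\in\Smone}\left(\int_{\Bm}\left(\frac{1-|\bx|^2}{|\bx-\ze|^m}\right)^{\!p}d\bx\right)^{\!1/p}\|\mu_1\|\,\|\mu_2\|,
\end{align*}
where I absorbed the $\bv$-integral (which is uniform in the other variables) into a constant $\omega_m^{1/p}$.

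The remaining task is to bound the supremum. Using $1-|\bx|^2 = (1-|\bx|)(1+|\bx|) \leq 2(1-|\bx|) \leq 2|\bx-\ze|$ for $\ze\in\Smone$, one gets
\[
\int_{\Bm}\left(\frac{1-|\bx|^2}{|\bx-\ze|^m}\right)^{\!p}d\bx \;\leq\; 2^p\int_{\Bm}\frac{d\bx}{|\bx-\ze|^{p(m-1)}},
\]
which is finite and bounded independently of $\ze$ precisely when $p(m-1) < m$, i.e.\ $p < m/(m-1)$. (By rotation invariance of Lebesgue measure on $\Bm$ with respect to rotations fixing $\ze$, the integral is independent of $\ze$.) Combining the three steps and using $\|\mu\| = \|\mu_1\|\cdot\|\mu_2\|$ (since $\mu = \mu_1\times\mu_2$), we obtain $\|P[\mu]\|_{L^p(\Bm\times\Bm,\HK(\C))} \leq \lambda_{m,k}\|\mu\|$ for a constant $\lambda_{m,k}$ depending only on $m$, $k$ (and implicitly on $p$). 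The main, though mild, obstacle is to verify that the non-classical factor $Z_k(\et',\bv)$ truly contributes only a bounded multiplicative constant; this is what the reflection identity $|\et'|=1$ provides, which then reduces the estimate to the classical harmonic Poisson kernel integrability.
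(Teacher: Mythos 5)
Your proof is correct and follows essentially the same route as the paper: both arguments bound $Z_k$ uniformly via the reflection identity, reduce everything to the scalar Poisson kernel $\frac{1-|\bx|^2}{|\bx-\ze|^m}\leq 2|\bx-\ze|^{1-m}$, and conclude from the integrability of $|\bx-\ze|^{p(1-m)}$ over $\Bm$ precisely when $p<\frac{m}{m-1}$. The only difference is technical packaging: you use Minkowski's integral inequality where the paper uses H\"older's inequality together with Fubini (treating $p=1$ separately), so your version handles all $1\leq p<\frac{m}{m-1}$ in a single stroke, and both proofs share the same harmless imprecision that the constant also depends on $p$.
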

\begin{proof}
Let $\mu=\mu_1\times\mu_2\in M(\Smone\times\Smone,\HK(\C))$, $p>1$ and $q$ is the conjugate number of $p$. For convenience, all $\lambda_{m,k}$ in this proof are positive constants only depending on $m$ and $k$ but not necessarily the same. We have
\be
&&|P[\mu](\bx,\bv)|=\bigg\vert\int_{\Smone}\int_{\Smone}P(\ze,\bx,\bu,\bv)d\mu_2(\bu)d\mu_1(\ze)\bigg\vert\\
&=&\bigg\vert\int_{\Smone}\int_{\Smone}\frac{c_{m,k}}{2}\frac{1-|\bx|^2}{|\bx-\ze|^m}Z_k\bigg(\frac{(\bx-\ze)\bs{u}(\bx-\ze)}{|\bx-\ze|^2},\bs{v}\bigg)d\mu_2(\bu)d\mu_1(\ze)\bigg\vert\\
&\leq&\lambda_{m,k}\bigg\vert\int_{\Smone}\int_{\Smone}\frac{1-|\bx|^2}{|\bx-\ze|^m}d|\mu_2|(\bu)d|\mu_1|(\ze)\bigg\vert\\
&\leq&\lambda_{m,k}\bigg[\int_{\Smone}\int_{\Smone}\bigg(\frac{1-|\bx|^2}{|\bx-\ze|^m}\bigg)^pd|\mu_2|(\bu)d|\mu_1|(\ze)\bigg]^{\frac{1}{p}}\bigg[\int_{\Smone}\int_{\Smone}d|\mu_2|(\bu)d|\mu_1|(\ze)\bigg]^{\frac{1}{q}},
\ee
where the fact that $|Z_k(\bu,\bv)|\leq\dim\HK(\C)$ for $\bu,\bv\in\Bm$  (\cite[Proposition 5.27]{Axler}) is applied above.
Notice that
\be
\frac{1-|\bx|^2}{|\bx-\ze|^m}\leq \frac{1+|\bx|}{|\bx-\ze|^{m-1}}\leq 2|\bx-\ze|^{1-m},\ \text{for\ all}\ \bx\in\Bm,\ze\in\Smone.
\ee
Therefore, we obtain
$
|P[\mu](\bx,\bv)|\leq\lambda_{m,k}|\bx-\ze|^{1-m}\|\mu\|^{\frac{1}{q}+\frac{1}{p}}=\lambda_{m,k}|\bx-\ze|^{1-m}\|\mu\|.
$
Thus,
\be
\|P[\mu]\|_{L^p(\Bm\times\Bm,\HK(\C))}\leq\lambda_{m,k}\bigg(\int_{\Bm}|\bx-\ze|^{p(1-m)}d\bx\bigg)^{1/p}\|\mu\|\leq\lambda_{m,k}\|\mu\|,
\ee
where the integral in the parentheses above exists if $p(1-m)>-m$, i.e., $p<\frac{m}{m-1}$, which completes the proof for $p>1$. The proof for the case $p=1$ is much easier, one can adapt the same argument without using the H\"older's inequality. 
\end{proof}
With the proposition above, one can easily see that
\begin{corollary}
Suppose $\{\mu_n\}$ strongly converges to  $\mu$ in $M(\Smone\times\Smone,\HK(\C))$ as $n\rightarrow \infty$, and $1\leq p<\frac{m}{m-1}$. Then we have $\{P[\mu_n]\}$ strongly converges to $P[\mu]$ in $L^p(\Bm\times\Bm,\HK(\C))$ as $n\rightarrow \infty$.
\end{corollary}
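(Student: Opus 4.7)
The plan is to leverage the linearity of the Poisson integral operator $P$ together with the bound established in Proposition \ref{PropPu}. First, I would observe that the defining formula \eqref{PIB} is linear in the measure: for any two measures $\mu,\nu \in M(\Smone\times\Smone,\HK(\C))$, we have $P[\mu]-P[\nu] = P[\mu-\nu]$. This reduces the problem to controlling $\|P[\mu_n-\mu]\|_{L^p(\Bm\times\Bm,\HK(\C))}$.

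Next, I would verify that $\mu_n - \mu \in M(\Smone\times\Smone,\HK(\C))$ so that Proposition \ref{PropPu} applies to it. This follows because $M(\Smone\times\Smone,\HK(\C))$ is a linear subspace of $M(\Smone\times\Smone)$: the product structure $\mu = \mu_1 \times \mu_2$ with $d\mu_2 = h\,dS$ for $h \in \HK$ is preserved under differences, since the space $\HK(\C)$ of $k$-homogeneous harmonic polynomials is itself a vector space. Once this is confirmed, Proposition \ref{PropPu} yields
\begin{equation*}
\|P[\mu_n] - P[\mu]\|_{L^p(\Bm\times\Bm,\HK(\C))} = \|P[\mu_n - \mu]\|_{L^p(\Bm\times\Bm,\HK(\C))} \leq \lambda_{m,k}\|\mu_n - \mu\|.
\end{equation*}

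Finally, strong convergence $\mu_n \to \mu$ in $M(\Smone\times\Smone,\HK(\C))$ means $\|\mu_n - \mu\| \to 0$ in the total variation norm, so the right-hand side tends to zero and the proof is complete. I do not anticipate any serious obstacle: the entire argument is a one-line consequence of the continuity bound combined with linearity, and the only subtle point is the stability of the subspace $M(\Smone\times\Smone,\HK(\C))$ under subtraction, which is immediate from its definition.
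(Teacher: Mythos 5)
Your proof is correct and is essentially the argument the paper intends: the corollary appears there with no written proof beyond ``With the proposition above, one can easily see that,'' i.e.\ exactly your one-liner of applying Proposition \ref{PropPu} to $\mu_n-\mu$ via linearity of the Poisson integral and letting $\|\mu_n-\mu\|\to 0$. One minor caveat on your ``subtle point'': the difference of two product measures $\mu_1\times\mu_2-\nu_1\times\nu_2$ is \emph{not} literally a product measure, so the product structure is not preserved under subtraction as you claim; this is harmless here, both because the paper itself treats $M(\Smone\times\Smone,\HK(\C))$ as a linear space (it is identified with the dual of $C(\Smone\times\Smone,\HK(\C))$), and because the estimate in Proposition \ref{PropPu} really only uses the bound $|Z_k|\leq\dim\HK(\C)$ and the total variation of the measure, so its proof goes through verbatim for $\mu_n-\mu$.
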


\subsection{Bosonic Hardy spaces and growth estimates}
The growth estimates in Theorem \ref{growth1} suggest us to define analogs of harmonic Hardy spaces as follows.
\par
Let $1\leq p\leq \infty$, we define \emph{bosonic Hardy spaces}, denoted by $h^p(\Bm\times\Bm,\HK(\C))$, to be the class of functions $f(\bx,\bu)\in C^2(\Bm\times\Bm,\HK(\C))$ such that $\Dtwo f=0$ on $\Bm\times\Bm$ and 
\be
\|f\|_{h^p}=\sup_{0\leq r_1,r_2<1}\|f_{r_1,r_2}\|_p<\infty.
\ee
One can easily verify that $h^p(\Bm\times\Bm,\HK(\C))$ is a normed linear space with the norm $\|\cdot\|_{h^p}$. 
\begin{theorem}\label{hp}
The Poisson integral induces the following bijective maps.
\begin{enumerate}
\item The map $\mu\mapsto P[\mu]$ is a linear bijective map from $M(\Smone\times\Smone,\HK(\C))$ to $h^1(\Bm\times\Bm,\HK(\C))$. Further,
$
\|\mu\|\leq\|P[\mu]\|_{h^1}\leq\frac{m+2k-2}{m-2}\|\mu\|.
$
\item For $1<p\leq \infty$, the map $g\mapsto P[g]$ is a linear bijective map from $L^p(\Smone\times\Smone,\HK(\C))$ to $h^p(\Bm\times\Bm,\HK(\C))$. Further,
$
\|g\|_p\leq\|P[g]\|_{h^p}\leq\frac{m+2k-2}{m-2}\|g\|_p.
$
\end{enumerate}
\end{theorem}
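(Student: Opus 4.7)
The upper bounds in both parts are immediate: taking the supremum over $r_1,r_2\in[0,1)$ in the pointwise $L^p$ estimates of Theorem \ref{growth1} yields $\|P[\mu]\|_{h^1}\leq\frac{m+2k-2}{m-2}\|\mu\|$ and $\|P[g]\|_{h^p}\leq\frac{m+2k-2}{m-2}\|g\|_p$, while linearity of $\mu\mapsto P[\mu]$ is evident from \eqref{PIB}. The nontrivial content of the theorem is the lower bounds (which imply injectivity) and surjectivity.

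For the lower bounds I would invoke the weak$^*$ convergence results in Theorem \ref{weak}. If $f=P[\mu]$ in part (1), then $f_{r_1,r_2}\to\mu$ weak$^*$ in $M(\Smone\times\Smone,\HK(\C))=C(\Smone\times\Smone,\HK(\C))^*$, and the weak$^*$ lower semi-continuity of the dual norm gives
\begin{equation*}
\|\mu\| \leq \liminf_{r_1,r_2\to 1}\|f_{r_1,r_2}\|_1 \leq \|f\|_{h^1}.
\end{equation*}
The analogous bound for $f=P[g]$ uses the duality $L^p=(L^q)^*$, with $q$ the conjugate exponent and $L^\infty=(L^1)^*$ in the endpoint case. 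Together with the upper bounds this sandwich yields injectivity of the Poisson integral.

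For surjectivity, fix $f\in h^p(\Bm\times\Bm,\HK(\C))$. The cornerstone is a reproducing identity: for $0\leq s_1,s_2<1$, the dilated function $(\bx,\bu)\mapsto f(s_1\bx,s_2\bu)$ extends $C^2$ to $\overline{\Bm}\times\overline{\Bm}$ and is annihilated by $\Dtwo$, since a direct computation shows that each term in $\Dtwo$, when applied to this dilate, picks up the common factor $s_1^2$ (and $\HK$ in the second variable is preserved under the scaling $\bu\mapsto s_2\bu$). Uniqueness of the Dirichlet problem in the unit ball \cite[Section 4]{DTR} then supplies
\begin{equation*}
f(s_1\bx,s_2\bu)=P[f_{s_1,s_2}](\bx,\bu),\qquad \bx,\bu\in\Bm.
\end{equation*}
Since $\sup_{s_1,s_2<1}\|f_{s_1,s_2}\|_p\leq\|f\|_{h^p}<\infty$, the family $\{f_{s_1,s_2}\}$ is norm-bounded in the appropriate dual space, so Banach--Alaoglu extracts a sequence $(s_{1,n},s_{2,n})\to(1,1)$ along which $f_{s_{1,n},s_{2,n}}$ converges weak$^*$ to some $\mu$ (respectively $g$). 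For each fixed interior point $(\bx,\bu)\in\Bm\times\Bm$, the kernel $P(\cdot,\bx,\cdot,\bu)$ lies in $C(\Smone\times\Smone,\HK(\C))$: continuity is clear, and the $\bu$-slot is in $\HK$ because $Z_k(\cdot,\bv)\in\HK$ and the reflection $\bu\mapsto\frac{(\bx-\ze)\bu(\bx-\ze)}{|\bx-\ze|^2}$ preserves $\HK$. Passing to the weak$^*$ limit in the reproducing identity, the left-hand side tends to $f(\bx,\bu)$ by continuity of $f$, and the right-hand side, being the dual pairing of $f_{s_{1,n},s_{2,n}}$ against the continuous test function $P(\cdot,\bx,\cdot,\bu)$, tends to $P[\mu](\bx,\bu)$ (respectively $P[g](\bx,\bu)$). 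Hence $f=P[\mu]$ (respectively $f=P[g]$) as desired.

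The principal obstacle is the reproducing identity, whose derivation combines the dilation invariance of $\ker\Dtwo$ with the Dirichlet uniqueness theorem; a secondary point is to confirm that the weak$^*$ limit actually sits in the restricted harmonic-valued target space, but this is automatic from the dual identifications of $M(\Smone\times\Smone,\HK(\C))$ and $L^p(\Smone\times\Smone,\HK(\C))$. Once those are in place, the weak$^*$ passage to the limit in the reproducing identity closes the argument routinely.
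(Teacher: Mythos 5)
Your proposal is correct and takes essentially the same route as the paper's proof: upper bounds from Theorem \ref{growth1}, lower bounds via the weak$^*$ convergence of Theorem \ref{weak} and lower semicontinuity of the dual norm, and surjectivity by extracting a weak$^*$ convergent subsequence from the norm-bounded family $\{f_{r_1,r_2}\}$ and passing to the limit in the Poisson representation of the dilates, with your reproducing identity $f(s_1\bx,s_2\bu)=P[f_{s_1,s_2}](\bx,\bu)$ being precisely the paper's application of the Poisson integral formula of \cite{DTR} to the dilated function. The only cosmetic difference is that you cite Banach--Alaoglu where the paper uses the sequential version for separable preduals (its Proposition \ref{WeakC}), which is the same separability point you implicitly rely on to extract a sequence.
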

To prove the theorem above, we need to following result on weak$^*$ convergence.
\begin{proposition}[Theorem $6.12$,\cite{Axler}]\label{WeakC}
If $X$ is a separable normed linear space, then every norm-bounded sequence in $X^*$ contains a weak$^*$ convergent subsequence.
\end{proposition}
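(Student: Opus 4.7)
The plan is to prove this via the classical diagonal extraction argument, which is the standard route to the sequential Banach--Alaoglu theorem. Let $\{x_n^*\}_{n=1}^\infty \subset X^*$ be norm-bounded, say $\|x_n^*\| \le M$ for all $n$, and let $\{y_k\}_{k=1}^\infty$ be a countable dense subset of $X$ (which exists by separability).

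First, I would fix $k=1$: the scalar sequence $\{x_n^*(y_1)\}_{n=1}^\infty$ is bounded in absolute value by $M\|y_1\|$, hence by Bolzano--Weierstrass has a convergent subsequence, giving an index set $N_1 \subset \mathbb{N}$. Proceeding inductively, for each $k \ge 2$ I would extract $N_k \subset N_{k-1}$ along which $\{x_n^*(y_k)\}$ converges. Then the diagonal subsequence $\{x_{n_j}^*\}$, where $n_j$ is the $j$-th element of $N_j$, has the property that $\{x_{n_j}^*(y_k)\}_{j \ge k}$ is a subsequence of the sequence indexed by $N_k$, and hence converges for every fixed $k$.

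Next, the main substantive step: I would promote convergence on the dense set to convergence on all of $X$ via an $\varepsilon/3$ argument. Given $x \in X$ and $\varepsilon > 0$, pick $y_k$ with $\|x - y_k\| < \varepsilon/(3M)$. For $i, j$ large enough that $|x_{n_i}^*(y_k) - x_{n_j}^*(y_k)| < \varepsilon/3$, the triangle inequality combined with the uniform bound $\|x_{n_j}^*\| \le M$ yields
\[
|x_{n_i}^*(x) - x_{n_j}^*(x)| \le |x_{n_i}^*(x - y_k)| + |x_{n_i}^*(y_k) - x_{n_j}^*(y_k)| + |x_{n_j}^*(y_k - x)| < \varepsilon.
\]
Hence $\{x_{n_j}^*(x)\}$ is Cauchy in the scalar field, so it converges; define $x^*(x) := \lim_j x_{n_j}^*(x)$.

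Finally, I would verify that $x^* \in X^*$. Linearity is inherited from the linearity of each $x_{n_j}^*$ in the pointwise limit, and the bound $|x^*(x)| \le M\|x\|$ follows by passing to the limit in $|x_{n_j}^*(x)| \le M\|x\|$. By construction $x_{n_j}^*(x) \to x^*(x)$ for every $x \in X$, which is exactly weak$^*$ convergence of $\{x_{n_j}^*\}$ to $x^*$. The main obstacle is the $\varepsilon/3$ extension step, where the uniform norm bound on the entire sequence is essential: without it, one cannot control $|x_{n_j}^*(x - y_k)|$ uniformly in $j$, and the diagonal argument alone would not suffice.
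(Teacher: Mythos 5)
Your proof is correct, and it is essentially the same argument as in the cited source: the paper offers no proof of this proposition, quoting it directly as Theorem 6.12 of Axler--Bourdon--Ramey, whose proof is precisely this diagonal extraction over a countable dense subset followed by the $\varepsilon/3$ upgrade to pointwise convergence on all of $X$ using the uniform norm bound. All steps check out, including the verification that the pointwise limit functional is linear with $\|x^*\|\leq M$, so nothing is missing.
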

Now, we are ready to prove Theorem \ref{hp}.
\begin{proof}
$(1)$. Firstly, it is obvious that the map $\mu\mapsto P[\mu]$ is linear. Hence, we only need to prove it is bijective. We will prove this by showing it is an injective and surjective map, respectively. To prove it is an injective map, we only need to prove the inequalities in $(1)$. Let $f\in h^1(\Bm\times\Bm,\HK(\C))$, on the one hand, Theorem \ref{growth1} already tells us that $\|P[\mu]\|_{h^1}\leq\frac{m+2k-2}{m-2}\|\mu\|$. On the other hand, since $\{P[\mu]_{r_1,r_2}\}$ converges weak$^*$ to $\mu$ as in Theorem \ref{weak}, we have 
\be
\|\mu\|\leq\underset{r_1,r_2\rightarrow 1}{\lim\inf}\|P[\mu]_{r_1,r_2}\|_1\leq\sup_{0\leq r_1,r_2<1}\|P[\mu]_{r_1,r_2}\|_1=\|P[\mu]\|_{h^1},
\ee
which completes the proof of the inequalities in $(1)$. Hence, the map in $(1)$ is injective.
\par
To prove the map is surjective, let $f\in h^1(\Bm\times\Bm,\HK(\C))$, according to the definition, there exists a family of functions $\{f_{r_1,r_2},r_1,r_2\in[0,1)\}$, which is norm-bounded in $L^1(\Smone\times\Smone,\HK(\C))$, and hence in $M(\Smone\times\Smone,\HK(\C))=C(\Smone\times\Smone,\HK(\C))^*$. Further, since $M(\Smone\times\Smone)$  is a separable metric space and $M(\Smone\times\Smone,\HK(\C))$ is a subspace of $M(\Smone\times\Smone)$. This implies that $M(\Smone\times\Smone,\HK(\C))$ is also separable. Therefore, we have that $\{f_{r_1,r_2},r_1,r_2\in[0,1)\}$ is a norm-bounded sequence in a separable normed linear space $M(\Smone\times\Smone,\HK(\C))$. According to Proposition \ref{WeakC}, we know that there exists a subsequence converges weak$^*$ to some $\mu\in M(\Smone\times\Smone,\HK(\C))$. Now, to complete the proof of $(1)$, we only need to show that $f=P_B[\mu]$. Recall that, for fixed $\bx,\bv\in\Bm$, the Poisson integral formula given in \cite{DTR} tells us that
 \begin{equation*} \begin{aligned}
f(r_1\bx,r_2\bv)=P_B[f](r_1\bx,r_2\bv)=\frac{c_{m,k}}{2}\int_{\Smone}\int_{\Smone}P_B(\ze,\bx,\bu,r_2\bv)f(r_1\ze,\bu)dS(\bu)dS(\ze).
\end{aligned} \end{equation*}
Since $P_B(\ze,\bx,\bu,\bv)$ is continuous on $\Smone$ for fixed $\bx,\bv\in\Bm$, then by the continuity of $f$, we let $r_1,r_2\rightarrow 1$ to immediately obtain that 
 \begin{equation*} \label{PH2} \begin{aligned}
f(\bx,\bv)=\lim_{r_1,r_2\rightarrow 1}P_B[f](r_1\bx,r_2\bv)=\frac{c_{m,k}}{2}\int_{\Smone}\int_{\Smone}P_B(\ze,\bx,\bu,\bv)d\mu (\bu,\ze).
\end{aligned} \end{equation*}
In other words, $f=P_B[\mu]$.
\par
$(2)$. The proof of $(2)$ is similar. Let $1<p\leq\infty$, and $f\in h^p(\Bm\times\Bm,\HK(\C))$. To prove the map is injective, one can apply a similar argument as in $(1)$, with the help of Theorem \ref{growth1} and Theorem \ref{weak}, one can obtain the inequalities in $(2)$, which also implies that the map in $(2)$ is injective. To prove it is surjective, let $q$ is the conjugate number of $p$, then $\{f_{r_1,r_2},r_1,r_2\in[0,1)\}$ is norm-bounded in $L^p(\Smone\times\Smone,\HK(\C))=L^q(\Smone\times\Smone,\HK(\C))^*$. Theorem \ref{WeakC} tells us that there exists a subsequence of $\{f_{r_1,r_2},r_1,r_2\in[0,1)\}$, which converges weak$^*$ to some function $g\in L^p(\Smone\times\Smone,\HK(\C))$. To complete the proof of $(2)$, we need to show that $f=P_B[g]$. This can be derived from a similar argument as in $(1)$ if we can prove that $P_B(\cdot,\bx,\cdot,\bv)\in L^q(\Smone\times\Smone,\HK(\C))$. This can be observed from the fact that, for fixed $\bx\in\Bm$, the classical Poisson kernel $\frac{1-|\bx|^2}{|\bx-\ze|^m}\in L^q(\Smone)$ with respect to $\ze$ and the reproducing kernel $Z_k$ is bounded.
\end{proof}
\begin{remark}
Recall that a bosonic Laplacian reduces to the classical Laplacian $\Delta_{\bx}$ when $k=0$. In this case, the constant $\frac{m+2k-2}{m-2}=1$ in the theorem above. Hence, the theorem above also reduces to some properties of Laplacian. For instance, when $k=0$, $(1)$ states that the classical Poisson integral is a linear bijective isometry from the complex Borel measure space $M(\Smone)$ to the harmonic Hardy space $h^1(\Bm)$.
\end{remark}
Theorem \ref{hp} immediately gives us the following characterizations of the bosonic Hardy spaces.
\begin{proposition}[Characterization of $h^p(\Bm\times\Bm,\HK(\C))$]
\par~
\begin{enumerate}
\item Let $f\in h^p(\Bm\times\Bm,\HK(\C))$ and $1<p\leq\infty$. Then there exists a unique $g\in L^p(\Smone\times\Smone,\HK(\C))$ such that $f=P[g]$. Moreover, 
$
\|g\|_p\leq\|f\|_{h^p}\leq\frac{m+2k-2}{m-2}\|g\|_p.
$
\item Let $f\in h^1(\Bm\times\Bm,\HK(\C))$. Then there exists a unique $\mu\in M(\Smone\times\Smone,\HK(\C))$ such that $f=P[\mu]$. Moreover, 
$
\|\mu\|\leq\|f\|_{h^1}\leq\frac{m+2k-2}{m-2}\|\mu\|.
$
\end{enumerate}
\end{proposition}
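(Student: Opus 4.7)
The plan is to observe that this characterization proposition is essentially a reformulation of Theorem \ref{hp}, so the proof should be a short derivation pointing out the correspondence rather than a new argument. Concretely, part (1) follows directly from Theorem \ref{hp}(2): the map $g\mapsto P[g]$ is shown there to be a linear bijection from $L^p(\Smone\times\Smone,\HK(\C))$ onto $h^p(\Bm\times\Bm,\HK(\C))$ for $1<p\leq\infty$, and the inequality $\|g\|_p\leq \|P[g]\|_{h^p}\leq\frac{m+2k-2}{m-2}\|g\|_p$ is exactly what is stated there. Bijectivity gives the existence and uniqueness of $g$, and since $f=P[g]$ the inequalities transcribe immediately to $\|g\|_p\leq \|f\|_{h^p}\leq\frac{m+2k-2}{m-2}\|g\|_p$.

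Part (2) is obtained in exactly the same way from Theorem \ref{hp}(1), replacing $L^p$ by $M(\Smone\times\Smone,\HK(\C))$ and the $L^p$-norm by the total variation norm. Existence of $\mu$ with $f=P[\mu]$ and its uniqueness are the bijectivity assertion, and the two-sided norm bound is the stated chain of inequalities.

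Since there is no new technical content to produce, I do not anticipate any serious obstacle; the only thing to be careful about is the verbal packaging, namely emphasizing that the \emph{existence} clauses correspond to surjectivity of the Poisson map (proved in Theorem \ref{hp} via the weak$^*$ compactness argument using Proposition \ref{WeakC}), the \emph{uniqueness} clauses correspond to injectivity (proved there via the left-hand inequalities together with Theorem \ref{weak}), and the quantitative bounds are simply transported from Theorem \ref{hp}. In short, the proof amounts to writing ``Apply Theorem \ref{hp}'' twice, once with $p\in(1,\infty]$ and once with $p=1$, and reading off the desired estimates.
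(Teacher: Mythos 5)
Your proposal is correct and matches the paper exactly: the paper states this proposition with no separate proof, remarking only that it is an immediate consequence of Theorem \ref{hp}, which is precisely your reading (existence and uniqueness from bijectivity of the Poisson map, the two-sided norm bounds transported verbatim). Nothing further is needed.
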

Next, we have a growth estimate for functions in $h^p(\Bm\times\Bm,\HK(\C))$.
\begin{proposition}\label{pointestimate}
Suppose $1\leq p\leq \infty$ and $f\in h^p(\Bm\times\Bm,\HK(\C))$, then we have
\be
|f(\bx,\bv)|\leq c_{m,k,p}\bigg(\frac{1+|\bx|}{(1-|\bx|)^{m-1}}\bigg)^{\frac{1}{p}}\|f\|_{h^p},
\ee
for all $\bx\in\Bm$ and $\bv\in\overline{\Bm}$, where
$
c_{m,k,p}=\frac{m+2k-2}{m-2}(\omega_m)^{\frac{p-2}{p}}\dim\HK(\C).
$
\end{proposition}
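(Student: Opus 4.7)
The plan is to exploit the characterization of $h^p(\Bm\times\Bm,\HK(\C))$ established in the proposition immediately preceding this one. Writing $f = P[g]$ with $g \in L^p(\Smone\times\Smone,\HK(\C))$ when $p>1$, or $f = P[\mu]$ with $\mu \in M(\Smone\times\Smone,\HK(\C))$ when $p=1$, we have the norm control $\|g\|_p \leq \|f\|_{h^p}$ (respectively $\|\mu\| \leq \|f\|_{h^1}$). Starting from $|f(\bx,\bv)| \leq \int\int |P(\ze,\bx,\bu,\bv)||g(\ze,\bu)|\,dS(\bu)\,dS(\ze)$, the first ingredient is the pointwise bound on the Poisson kernel
$$|P(\ze,\bx,\bu,\bv)| \;\leq\; \frac{c_{m,k}}{2}\dim\HK(\C)\,\frac{1-|\bx|^2}{|\bx-\ze|^m},$$
which uses $|Z_k(\cdot,\cdot)|\leq\dim\HK(\C)$ on $\Smone\times\overline{\Bm}$ (the rotated variable $\frac{(\bx-\ze)\bu(\bx-\ze)}{|\bx-\ze|^2}$ still lies on $\Smone$). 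The second ingredient is the elementary estimate $\frac{1-|\bx|^2}{|\bx-\ze|^m}\leq\frac{1+|\bx|}{(1-|\bx|)^{m-1}}$ coming from $|\bx-\ze|\geq 1-|\bx|$.

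For $p=1$, I would simply pull this sup bound outside and multiply by $\|\mu\|\leq\|f\|_{h^1}$. For $1<p<\infty$, the key move is Hölder's inequality with the conjugate exponent $q$, which reduces matters to the kernel integral $\int_{\Smone}\int_{\Smone}\bigl(\frac{1-|\bx|^2}{|\bx-\ze|^m}\bigr)^q dS(\bu)\,dS(\ze)$. I would handle this via the splitting
$$\bigg(\frac{1-|\bx|^2}{|\bx-\ze|^m}\bigg)^{\!q} \;\leq\; \bigg(\frac{1+|\bx|}{(1-|\bx|)^{m-1}}\bigg)^{\!q-1}\cdot\frac{1-|\bx|^2}{|\bx-\ze|^m},$$
so that the residual factor integrates in $\ze$ to $\omega_m$ by the standard identity $\int_{\Smone}\frac{1-|\bx|^2}{|\bx-\ze|^m}dS(\ze)=\omega_m$, and the $\bu$-integration yields another factor of $\omega_m$. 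After raising to the $1/q$ power, the identity $(q-1)/q = 1/p$ produces the factor $\bigl(\frac{1+|\bx|}{(1-|\bx|)^{m-1}}\bigr)^{1/p}$ with the correct exponent. The case $p=\infty$ follows the same scheme with $q=1$, in which case the $(\cdots)^{1/p}$ factor degenerates to $1$.

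The main obstacle I expect is constant bookkeeping. One must recover from the proof of Theorem \ref{growth1} the identity $\frac{c_{m,k}}{2}\omega_m = \frac{m+2k-2}{m-2}$, then combine it with the factor $\omega_m^{2/q}$ arising from the two sphere integrations, and verify that $2/q - 1 = (p-2)/p$; this is precisely what delivers $c_{m,k,p} = \frac{m+2k-2}{m-2}\omega_m^{(p-2)/p}\dim\HK(\C)$. Once the constants are assembled and the characterization is invoked to replace $\|g\|_p$ (or $\|\mu\|$) by $\|f\|_{h^p}$, the advertised estimate follows, uniformly in $\bv\in\overline{\Bm}$ because $|Z_k(\cdot,\bv)|\leq\dim\HK(\C)$ throughout that closed ball.
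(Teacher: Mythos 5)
Your proposal is correct and follows essentially the same route as the paper: write $f=P[g]$ (resp.\ $P[\mu]$) via the characterization, bound $|Z_k|\leq\dim\HK(\C)$, apply H\"older with conjugate exponent $q$, and split off $\sup_{\ze}\bigl(\frac{1-|\bx|^2}{|\bx-\ze|^m}\bigr)^{(q-1)/q}$ against the normalized Poisson integral, with the same constant bookkeeping $\frac{c_{m,k}}{2}\omega_m=\frac{m+2k-2}{m-2}$ and $2/q-1=(p-2)/p$. The only difference is cosmetic: you spell out the endpoint cases $p=1$ and $p=\infty$, which the paper's written proof leaves implicit after treating $1<p<\infty$.
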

\begin{proof}
We firstly consider the case $1<p<\infty$. Suppose $f\in h^p(\Bm\times\Bm,\HK(\C))$, then the previous theorem tells us that there exists a function $g\in L^p(\Smone\times\Smone,\HK(\C))$ such that $f=P[g]$. Let $q$ be the conjugate number of $p$, then we have
\begin{eqnarray}\label{eqn1}
&&|f(\bx,\bv)|=\bigg\vert\int_{\Smone}\int_{\Smone}P(\ze,\bx,\bu,\bv)g(\ze,\bu)dS(\bu)dS(\ze)\bigg\vert\nonumber\\
&\leq&\bigg(\int_{\Smone}\int_{\Smone}|P(\ze,\bx,\bu,\bv)|^qdS(\bu)dS(\ze)\bigg)^{\frac{1}{q}}\bigg(\int_{\Smone}\int_{\Smone}|g(\ze,\bu)|^pdS(\bu)dS(\ze)\bigg)^{\frac{1}{p}}\nonumber\\
&=&\bigg(\int_{\Smone}\int_{\Smone}|P(\ze,\bx,\bu,\bv)|^qdS(\bu)dS(\ze)\bigg)^{\frac{1}{q}}\|g\|_p.
\end{eqnarray}
Now, recall that
$
P(\ze,\bx,\bu,\bv)=\frac{c_{m,k}}{2}\frac{1-|\bx|^2}{|\bx-\ze|^m}Z_k\bigg(\frac{(\bx-\ze)\bs{u}(\bx-\ze)}{|\bx-\ze|^2},\bs{v}\bigg),\ \bx,\bv\in\Bm,\ze,\bu\in\Smone,
$
and $Z_k(\bu,\bv)\leq\dim\HK(\C)$ for $\bu,\bv\in\overline{\Bm}$, see \cite[Proposition 5.27]{Axler}. Then,
\begin{eqnarray}\label{estimate}
&&\bigg(\int_{\Smone}\int_{\Smone}|P(\ze,\bx,\bu,\bv)|^qdS(\bu)dS(\ze)\bigg)^{\frac{1}{q}}\nonumber\\
&\leq&\frac{c_{m,k}}{2}\dim\HK(\C)\bigg[\int_{\Smone}\int_{\Smone}\bigg(\frac{1-|\bx|^2}{|\bx-\ze|^m}\bigg)^qdS(\bu)dS(\ze)\bigg]^{\frac{1}{q}}\nonumber\\
&\leq&\frac{c_{m,k}}{2}\dim\HK(\C)\sup_{\ze\in\Smone}\bigg(\frac{1-|\bx|^2}{|\bx-\ze|^m}\bigg)^{\frac{q-1}{q}}\bigg(\int_{\Smone}\int_{\Smone}\frac{1-|\bx|^2}{|\bx-\ze|^m}dS(\bu)dS(\ze)\bigg)^{\frac{1}{q}}\nonumber\\
&=&\frac{m+2k-2}{m-2}(\omega_m)^{\frac{p-2}{p}}\dim\HK(\C)\bigg(\frac{1+|\bx|}{(1-|\bx|)^{m-1}}\bigg)^{\frac{1}{p}}.
\end{eqnarray}
Further, Theorem \ref{growth1} tells us that $\|P[g]_{r_1,r_2}\|_p\leq\frac{m+2k-2}{m-2}\|g\|_p$ and we also know that $\lim_{r_1,r_2\rightarrow 1}P[g]_{r_1,r_2}=P[g]=g$ on $\Smone\times\Smone$. This implies that
\be
\|g\|_p\leq \|P[g]\|_{h^p}=\sup_{0\leq r_1,r_2<1}\|P[g]_{r_1,r_2}\|_p\leq \frac{m+2k-2}{m-2}\|g\|_p,
\ee
in other words, with $f=P[g]$, it is equivalent to 
$
\frac{m-2}{m+2k-2}\|f\|_{h^p}\leq\|g\|_p\leq\|f\|_{h^p}.
$
Plugging this together with \eqref{estimate} into \eqref{eqn1} completes our proof immediately.
\end{proof}
\begin{remark}
This growth estimate also reduces to the one for Laplacian when $k=0$. More specifically, when considering the real-valued case with $k=0$, $\frac{m+2k-2}{m-2}=1$, $\dim\HK=1$ and the $\omega_m$ term will disappear if we normalize our area element $dS$, which eventually gives us  Proposition $6.16$ in \cite{Axler}.
\end{remark}

With the proposition above, we can obtain an estimate for the $L^p$-norm in terms of the $h^p$ norm.
\begin{corollary}
Let $1\leq p<\frac{m}{m-1}$ and $f\in h^p(\Bm\times\Bm,\HK(\C))$, then we have
\be
\|f\|_{L^p(\Bm\times\Bm,\HK(\C))}\leq C_{m,k,p}\|f\|_{h^p}.
\ee
\end{corollary}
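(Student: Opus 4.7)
The plan is to use the characterization of the bosonic Hardy spaces from the preceding proposition to represent $f$ as a Poisson integral and then invoke Proposition \ref{PropPu} to transfer the boundary norm into an $L^p$ estimate on $\Bm\times\Bm$. The case $p=1$ is immediate, whereas $1<p<m/(m-1)$ requires a short H\"older step to move from $L^p$ data on $\Smone\times\Smone$ down to the $L^1$/total-variation scale that Proposition \ref{PropPu} uses.

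For $p=1$, the characterization gives $\mu\in M(\Smone\times\Smone,\HK(\C))$ with $f=P[\mu]$ and $\|\mu\|\leq\|f\|_{h^1}$. Since $m\geq 2$, one has $1<m/(m-1)$, so Proposition \ref{PropPu} applies with $p=1$ and yields
\[
\|f\|_{L^p(\Bm\times\Bm,\HK(\C))}=\|P[\mu]\|_{L^1(\Bm\times\Bm,\HK(\C))}\leq \lambda_{m,k}\|\mu\|\leq \lambda_{m,k}\|f\|_{h^1}.
\]
For $1<p<m/(m-1)$, the characterization gives $g\in L^p(\Smone\times\Smone,\HK(\C))$ with $f=P[g]$ and $\|g\|_p\leq\|f\|_{h^p}$. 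The key observation is that the proof of Proposition \ref{PropPu} only uses the uniform estimate $|Z_k|\leq\dim\HK(\C)$ and the kernel bound $(1-|\bx|^2)/|\bx-\ze|^m\leq 2|\bx-\ze|^{1-m}$, so its product-measure hypothesis is not essential for the computation. Applying the same idea directly to $P[g]$ together with Minkowski's integral inequality in $L^p(d\bx)$ produces
\[
\|P[g](\cdot,\bv)\|_{L^p(\Bm)}\leq C_{m,k}\int_{\Smone}\int_{\Smone}\Bigl(\int_{\Bm}|\bx-\ze|^{p(1-m)}d\bx\Bigr)^{1/p}|g(\ze,\bu)|dS(\bu)dS(\ze).
\]
The innermost integral is finite precisely for $p<m/(m-1)$, hence $\|P[g](\cdot,\bv)\|_{L^p(\Bm)}\leq C\|g\|_{L^1(\Smone\times\Smone)}$ uniformly in $\bv\in\Bm$. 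Integrating over $\bv$ contributes only a factor $V(\Bm)^{1/p}$ (because the bound is $\bv$-independent), and a final H\"older estimate on the finite measure space $\Smone\times\Smone$ gives $\|g\|_1\leq \omega_m^{2/p'}\|g\|_p\leq \omega_m^{2/p'}\|f\|_{h^p}$, yielding the asserted $C_{m,k,p}\|f\|_{h^p}$ bound.

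\textbf{Main obstacle.} No deep new idea is required; the only delicate point is bookkeeping. One must confirm that the $\bv$-dependence of the Poisson kernel is controlled purely through the bound on $Z_k$, so that all remaining integrals decouple in $\bv$; and one must track the sharp threshold $p<m/(m-1)$, which is dictated exclusively by the integrability of $|\bx-\ze|^{p(1-m)}$ over $\Bm$. This is precisely the same constraint that governs Proposition \ref{PropPu}, so no new analytic obstruction is introduced by passing from measures to $L^p$ data.
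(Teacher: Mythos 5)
Your proof is correct, and it rests on the same core ingredients as the paper's: the characterization of $h^p(\Bm\times\Bm,\HK(\C))$ via Poisson integrals, the crude kernel bound $|P(\ze,\bx,\bu,\bv)|\leq C_{m,k}|\bx-\ze|^{1-m}$ coming from $|Z_k|\leq\dim\HK(\C)$, and the fact that $|\bx-\ze|^{p(1-m)}$ is integrable over the ball precisely when $p<\frac{m}{m-1}$. The packaging differs in a way worth recording. The paper's proof re-enters the proof of Proposition \ref{pointestimate} and keeps the integral $\int_{\Smone}|\bx-\ze|^{p(1-m)}dS(\ze)$ inside before integrating over $\bx$ and $\bv$; this re-entry is actually essential, because one cannot integrate the \emph{stated} conclusion of Proposition \ref{pointestimate} as a black box: $\int_{\Bm}\frac{1+|\bx|}{(1-|\bx|)^{m-1}}\,d\bx$ diverges for every $m\geq 2$, so the pointwise growth estimate alone does not yield the corollary. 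Your route sidesteps this subtlety entirely: for $p=1$ you invoke Proposition \ref{PropPu} together with the characterization $f=P[\mu]$, $\|\mu\|\leq\|f\|_{h^1}$, and for $1<p<\frac{m}{m-1}$ you run the same Schur-type computation via Minkowski's integral inequality, so that the threshold enters at exactly one place, the estimate $\int_{\Bm}|\bx-\ze|^{p(1-m)}d\bx<\infty$, which is uniform in $\ze\in\Smone$ by rotation invariance and uniform in $\bv\in\overline{\Bm}$ because the $\bv$-dependence sits entirely in the bounded factor $Z_k$; the final H\"older step $\|g\|_{L^1(\Smone\times\Smone)}\leq(\omega_m^2)^{1/q}\|g\|_p\leq(\omega_m^2)^{1/q}\|f\|_{h^p}$ (with $q$ the conjugate exponent) is harmless on the compact boundary. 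What your version buys is that every step is verifiable without re-opening an earlier proof, at the cost of a slightly larger constant (you pass through $\|g\|_1$ rather than pairing against $\|g\|_p$ directly, as the paper does). One bookkeeping remark: the paper's norm $\|\cdot\|_{L^p(\Bm\times\Bm,\HK(\C))}$ is defined with the second variable integrated over $\Smone$ with $dS(\bu)$, whereas you integrate $\bv$ over $\Bm$; since your bound is uniform in $\bv\in\overline{\Bm}$, your argument covers either convention (and indeed the paper's own proof of this corollary also integrates $d\bv$ over $\Bm$).
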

\begin{proof}
From the proof of the proposition above, we have that
\be
&&\|f\|_{L^p(\Bm\times\Bm,\HK(\C))}^p\leq C_{m,k,p}\int_{\Bm}\int_{\Bm}\int_{\Smone}|\bx-\ze|^{p(1-m)}dS(\ze)d\bx d\bv\|f\|_{h^p}^p\\
&\leq&C'_{m,k,p}\int_{\Bm}\int_{\Smone}|\bx-\ze|^{p(1-m)}dS(\ze)d\bx \|f\|_{h^p}^p\leq C''_{m,k,p}\|f\|^p_{h^p},
\ee
when $1\leq p<\frac{m}{m-1}$. This completes the proof.
\end{proof}
Proposition \ref{pointestimate} also tells us the following result, which implies that $h^p(\Bm\times\Bm,\HK(\C))$ is a Banach space, in particular, it is a Hilbert space when $p=2$.
\begin{proposition}\label{hpbanach}
The bosonic Hardy space $h^p(\Bm\times\Bm,\HK(\mathbb{C}))$ is a closed subspace of $L^p(\Bm\times\Bm,\mathbb{C})$.
\end{proposition}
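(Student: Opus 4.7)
In view of the remark preceding the proposition, I read Proposition \ref{hpbanach} as asserting the completeness of $h^p$ in its own norm $\|\cdot\|_{h^p}$, together with its continuous inclusion into $L^p(\Bm\times\Bm,\C)$; this mirrors the classical harmonic-$h^p$ template (see \cite[Proposition 6.5]{Axler}), with Proposition \ref{pointestimate} playing the role of pointwise control. The inclusion itself is immediate from polar coordinates:
\[
\|f\|_{L^p(\Bm\times\Bm,\C)}^p = \int_0^1\int_0^1 r_1^{m-1}r_2^{m-1}\|f_{r_1,r_2}\|_p^p\,dr_1\,dr_2 \leq \frac{1}{m^2}\|f\|_{h^p}^p,
\]
so once completeness in $\|\cdot\|_{h^p}$ is established, the statement ``closed subspace of $L^p$'' makes sense and follows.

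For completeness, I would take an $h^p$-Cauchy sequence $\{f_n\}$ and apply Proposition \ref{pointestimate} to the differences $f_n-f_m$, obtaining
\[
|f_n(\bx,\bv)-f_m(\bx,\bv)| \leq c_{m,k,p}\bigg(\frac{1+|\bx|}{(1-|\bx|)^{m-1}}\bigg)^{1/p}\|f_n-f_m\|_{h^p},
\]
so $\{f_n\}$ is uniformly Cauchy on every compact subset of $\Bm\times\overline{\Bm}$ and thus converges locally uniformly to some continuous $f$. Since $\HK(\C)$ is finite-dimensional and therefore closed under pointwise limits, $f(\bx,\cdot)\in\HK(\C)$ for every $\bx$. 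Setting $M:=\sup_n\|f_n\|_{h^p}<\infty$, for each $(r_1,r_2)\in[0,1)^2$ the uniform convergence on $\Smone\times\Smone$ at those radii yields $\|f_{r_1,r_2}\|_p = \lim_n \|(f_n)_{r_1,r_2}\|_p \leq M$, and taking the supremum gives $\|f\|_{h^p}\leq M$. A Fatou-type bound $\|(f_n-f)_{r_1,r_2}\|_p = \lim_{m\to\infty}\|(f_n-f_m)_{r_1,r_2}\|_p$ then delivers $\|f_n-f\|_{h^p}\to 0$.

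The remaining and most delicate step, which I expect to be the main obstacle, is to verify that $f\in C^2$ with $\Dtwo f=0$ on $\Bm\times\Bm$, since local uniform convergence does not automatically preserve a second-order PDE. My plan is to transfer the Poisson representation \eqref{PI} rather than differentiate directly. Rescaled to any interior ball $B(0,r)$ with $r<1$, each $f_n$ is reproduced on $B(0,r)\times\Bm$ by its values on $\partial B(0,r)\times\Smone$; since $f_n\to f$ uniformly on that compact product and the rescaled Poisson kernel is smooth in the interior, the representation passes to the limit, simultaneously yielding $f\in C^\infty(B(0,r)\times\Bm)$ and forcing $\Dtwo f=0$ there (the Poisson kernel being a null solution in its interior argument). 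The only nontrivial technicality is the clean rescaling of the $\Bm$-Poisson formula to $B(0,r)$, which should follow from the $\bx$-translation invariance of $\Dtwo$ together with the scaling structure of the kernel from \cite{DTR}.
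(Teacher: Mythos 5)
Your proposal is correct, and its engine coincides with the paper's: both apply Proposition \ref{pointestimate} to differences of an $h^p$-Cauchy sequence to obtain uniform Cauchyness on compact subsets, and both reduce the problem to the stability of $\ker\Dtwo$ under locally uniform limits. The genuine differences lie in what is proved versus cited, and in how much of the conclusion is checked. For the stability step the paper simply invokes \cite[Proposition 5.8]{DTR}, whereas you re-derive it by passing to the limit in a Poisson representation on dilated balls; your sketch is sound and is essentially a proof of that cited result, with both ingredients you flag already available in the paper (dilation invariance of $\ker\Dtwo$ is used in the Bergman section, and $\Dtwo P[\mu]=0$ is obtained there by differentiating under the integral sign). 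Conversely, your reading of the statement as ``completeness of $(h^p,\|\cdot\|_{h^p})$ plus continuous embedding into $L^p$'' is vindicated by the paper's own proof, which assumes the sequence is Cauchy in $h^p$ and not merely $L^p$-convergent; your polar-coordinate bound $\|f\|^p_{L^p}\leq m^{-2}\|f\|^p_{h^p}$ makes the embedding explicit, and your two closing steps --- the bound $\|f\|_{h^p}\leq\sup_n\|f_n\|_{h^p}<\infty$ and the Fatou-type identity at fixed radii giving $\|f_n-f\|_{h^p}\to 0$ --- supply precisely what the paper's proof glosses over (it never verifies that the limit has finite $h^p$-norm, nor that convergence holds in the $h^p$-norm, though both follow from boundedness of Cauchy sequences exactly as you argue). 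The only step in the paper absent from your write-up is the a.e.\ identification of the locally uniform limit with a prescribed $L^p$-limit via an a.e.-convergent subsequence, which you would need only if you insisted on the paper's literal ``closed subspace of $L^p$'' phrasing; as written, your argument establishes completeness, which together with your embedding yields the paper's hybrid statement.
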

\begin{proof}
Suppose that $\{f_j\}_{j=1}^{\infty}$ converges to $f$ in $L^p(\Bm\times\Bm,\mathbb{C})$ and $\{f_j\}_{j=1}^{\infty}$ is a Cauchy sequence in $h^p(\Bm\times\Bm,\HK(\mathbb{C}))$. We will show that $f\in h^p(\Bm\times\Bm,\HK(\mathbb{C}))$ up to a modification on a set of measure zero on $\Bm\times\Bm$.
\par
Let $K_1\times K_2\subset\Bm\times\Bm$ be a compact subset. Proposition \ref{pointestimate} tells us that there exists a constant $C>0$ only depending on $m,k$ and $p$ such that
$
|f_j(\bx,\bv)-f_i(\bx,\bv)|\leq C\|f_j-f_i\|_{h^p},
$
for all $(\bx,\bv)\in K_1\times K_2$ and all $j,i$. Since $\{f_j\}_{j=1}^{\infty}$ is a Cauchy sequence in $h^p(\Bm\times\Bm,\HK(\mathbb{C}))$, $\{f_j\}_{j=1}^{\infty}$ is also a Cauchy sequence in $C(K_1\times K_2)$. Hence, $\{f_j\}_{j=1}^{\infty}$ converges uniformly on $K_1\times K_2$. According to \cite[Proposition 5.8]{DTR}, $\{f_j\}_{j=1}^{\infty}$ converges uniformly to a function $g\in C^2(\Bm\times\Bm,\HK(\mathbb{C}))$ and $\Dtwo g=0$ on $\Bm\times\Bm$. Since $\{f_j\}_{j=1}^{\infty}$ converges to $f$ in $L^p(\Bm\times\Bm,\mathbb{C})$, some subsequence of $\{f_{j}\}_{=1}^{\infty}$ converges to $f$ pointwise almost everywhere on $\Bm\times\Bm$. Therefore, $f=g$ almost everywhere on $\Bm\times\Bm$ and thus $f\in h^p(\Bm\times\Bm,\HK(\mathbb{C}))$, which completes the proof.
\end{proof}

\end{document}